\DeclarePairedDelimiter{\norm}{\lVert}{\rVert}
\DeclarePairedDelimiter{\abs}{\lvert}{\rvert}
\def\S{{\cal S}}
\def\P{{\cal P}}
\def\R{{\mathbb R}}
\def\C{{\cal C}}
\def\N{{\cal N}}
\newcommand{\topnew}{\top \hspace{-0.05cm}}
\newcommand\numberthis{\addtocounter{equation}{1}\tag{\theequation}}
\algnewcommand{\IfThenElse}[3]{
  \State \algorithmicif\ #1\ \algorithmicthen\ #2\ \algorithmicelse\ #3}
\DeclareMathOperator*{\argmin}{argmin}
\DeclareMathOperator*{\vect}{vec}
\DeclareMathOperator*{\diag}{diag}
\newtheorem{theorem}{Theorem}
\newtheorem{proposition}{Proposition}
\newtheorem{lemma}{Lemma}
\newtheorem{remark}{Remark}
\newtheorem{definition}{Definition}
\newcommand\fs@betterruled{%
 \def\@fs@cfont{\bfseries}\let\@fs@capt\floatc@ruled
 \def\@fs@pre{\vspace*{5pt}\hrule height.8pt depth0pt \kern2pt}%
 \def\@fs@post{\kern2pt\hrule\relax}%
 \def\@fs@mid{\kern2pt\hrule\kern2pt}%
 \let\@fs@iftopcapt\iftrue}
\begin{document}
%
\title{On Local Linear Convergence of Projected Gradient Descent for Unit-Modulus Least Squares}
%
%
%

\author{Trung~Vu,~\IEEEmembership{Graduate Student Member,~IEEE,}~and~Raviv Raich,~\IEEEmembership{Senior Member,~IEEE},~and~Xiao Fu,~\IEEEmembership{Senior Member,~IEEE}
\thanks{Trung Vu, Raviv Raich, and Xiao Fu are with the School of Electrical Engineering and Computer Science, Oregon State University, Corvallis, OR 97331, USA.}
\thanks{E-mails: vutru@oregonstate.edu; raich@oregonstate.edu; and xiao.fu@oregonstate.edu}
\thanks{Manuscript received 2022.}}

%
%

\markboth{Preprint}%
{Trung \MakeLowercase{\textit{et al.}}: On Local Linear Convergence of Projected Gradient Descent for Unit-Modulus Least Squares}
%



\maketitle

\begin{abstract}
The unit-modulus least squares (UMLS) problem has a wide spectrum of applications in signal processing, e.g., phase-only beamforming, phase retrieval, radar code design, and sensor network localization. Scalable first-order methods such as projected gradient descent (PGD) have recently been studied as a simple yet efficient approach to solving the UMLS problem. Existing results on the convergence of PGD for UMLS often focus on global convergence to stationary points. As a non-convex problem, only a sublinear convergence rate has been established. However, these results do not explain the fast convergence of PGD frequently observed in practice. This manuscript presents a novel analysis of convergence of PGD for UMLS, justifying the linear convergence behavior of the algorithm near the solution. By exploiting the local structure of the objective function and the constraint set, we establish an exact expression for the convergence rate and characterize the conditions for linear convergence. Simulations show that our theoretical analysis corroborates numerical examples. Furthermore, variants of PGD with adaptive step sizes are proposed based on the new insight revealed in our convergence analysis. The variants show substantial acceleration in practice.
\end{abstract}

\begin{IEEEkeywords}
Unit-modulus least squares, projected gradient descent, linear convergence analysis.
\end{IEEEkeywords}

%
\IEEEpeerreviewmaketitle


\section{Introduction}
%
%
%
%

\IEEEPARstart{U}{nit}-modulus least squares (UMLS) is formulated as the following optimization problem:
\begin{align*} 
    \min_{\bm w \in \mathbb{C}^N} \quad & \frac{1}{2} \norm{\bm \Phi \bm w - \bm h}^2 \\
    \text{ s.t. } \quad & \abs{w_i}^2=1 \text{ for } i=1,\ldots,N , \numberthis \label{prob:orginal}
\end{align*}
where $\bm \Phi \in \mathbb{C}^{M \times N}$ and $\bm h \in \mathbb{C}^M$. This problem arises in numerous machine learning and signal processing applications including, but not limited to, phase-only beamforming \cite{lu2013novel,tranter2017fast}, phase retrieval \cite{candes2013phaselift,waldspurger2015phase}, radar code design \cite{thompson1976adaptation,soltanalian2014designing}, and sensor network localization \cite{fu2012complex}.
For instance, in phase-only beamforming applications, the goal is to design a weight vector $\bm w$ associated with $N$ antennas so that it retains the power of each antenna and enhances reception of the signals from certain directions while mitigating interference located at other directions. For a uniform linear antenna array, $\bm \Phi$ can be the steering vector matrix with a Vandermonde structure.

It is well-known that UMLS is a non-convex NP-hard problem \cite{zhang2006complex}. One traditional approach to this problem is semi-definite relaxation (SDR). In \cite{luo2010semidefinite}, Luo~\textit{et.~al.~} recast (\ref{prob:orginal}) as a quadratically constrained quadratic programming (QCQP) problem and then lifted it to an $N^2$-dimensional problem with a rank-$1$ constraint. By dropping the non-convex rank constraint, the resulting problem is convex and can be solved via interior point methods. The major disadvantage of SDR is the high computational complexity ($O(N^7)$ flops and $O(N^2)$ memory units), which is not suitable for large-scale problems in modern applications.
Another approach that has recently been proposed by Tranter~\textit{et.~al.} \cite{tranter2017fast} is projected gradient descent (PGD). Since the projection onto the unit-modulus manifold is simple and low-cost, PGD was shown to be efficient in large-scale settings. Notably, the authors in \cite{tranter2017fast} showed that despite the lack of convexity, the algorithm converges globally to a set of stationary points of (\ref{prob:orginal}) and the rate of convergence is at least sublinear. 

Motivated by Tranter's result, this manuscript provides an in-depth convergence analysis of PGD for UMLS. 
First, we observe in practice that the algorithm frequently exhibits linear convergence near a local minimum of the problem. This is significantly faster than the sublinear convergence proven in \cite{tranter2017fast}. 
Second, the bounding technique in \cite{tranter2017fast} is rather conservative since it focuses on global characterization yet ignores the local structure of the problem around the solution.
In particular, while UMLS is not a globally convex problem, it can still possess a benign geometry around a local minimum. In such scenario, one can expect that PGD will converge linearly to the local minimum similar to gradient descent for unconstrained minimization of a smooth and strongly convex function \cite{nesterov2003introductory}.
With this intuition, our goal here is to provide an analytical framework to uncover the fast linear convergence behavior of PGD near a local minimum of the UMLS problem.\footnote{Preliminary aspects of this work appeared in an earlier conference version \cite{vu2019convergence}, where we study the local convergence of PGD for minimizing a quadratic over the unit sphere. When $N=1$, the UMLS problem and the spherically constrained least squares problem coincide. For $N>1$, UMLS introduces a more complex constraint set in the form of the cross product of multiple spherical constrains.}
By exploiting the structure of the problem near local minima, we are able to identify the sufficient conditions for local linear convergence of PGD with a fixed step size and obtain an exact expression of the convergence rate. 
In addition, we establish the region of convergence in which initializing the algorithm is guaranteed to converge to the desired local minimum. The theoretical rate predicts accurately the empirical convergence rate in our numerical simulation. Finally, in practical applications where prior knowledge of the solution is not available, we propose two adaptive-step-size variants of PGD that require the same iteration complexity while offering faster linear convergence compared to the optimal fixed step size in theory.

The rest of the paper is organized as follows.
Section~\ref{sec:ps} presents the real-valued formulation of the UMLS problem that is considered in this paper and the PGD algorithm for solving this problem.
Section~\ref{sec:prel} summarizes existing results on the convergence of PGD for UMLS in the literature, highlighting the fundamental similarity between the UMLS problem and the spherically constrained least squares problem.
Our convergence analysis is presented in Section~\ref{sec:analysis}, including solution properties, algorithm properties, and linear convergence properties.
In Section~\ref{sec:adaptive}, we propose two variants of PGD for UMLS that use adaptive step size schemes to effectively obtain fast linear convergence without prior knowledge of the solution.
Finally, in Section~\ref{sec:exp}, we perform numerical experiments to verify our theoretical analysis.

\section{Problem Statement}
\label{sec:ps}

In this section, we introduce fundamental concepts in formulating the UMLS problem as a standard constrained least squares optimization and the PGD algorithm for solving it.

\subsection{Notation} 
\label{sec:notation}

Throughout the paper, we use the notations $\norm{\cdot}_F$ and $\norm{\cdot}_2$ to denote the Frobenius norm and the spectral norm of a matrix, respectively. Additionally, $\norm{\cdot}$ is used on a vector to denote the Euclidean norm. Boldfaced symbols are reserved for vectors and matrices. The $t \times t$ identity matrix is denoted by $\bm I_t$. The $t$-dimensional vector of all zeros and the $t$-dimensional vector of all ones are denoted by $\bm 0_t$ and $\bm 1_t$, respectively. The notations $\otimes$ denotes the Kronecker product between two matrices and $\vect(\cdot)$ denotes the vectorization of a matrix by stacking its columns on top of one another. For a complex number $z$, $\Re(z)$ and $\Im(z)$ denote the real and imaginary parts of $z$, respectively. Given an $n$-dimensional vector $\bm x$, $x_i$ denotes its $i$th element and $\diag(\bm x)$ denotes the $n \times n$ diagonal matrix with the corresponding diagonal entries $x_1,\ldots,x_n$.
Given a matrix $\bm X \in \R^{m \times n}$, the $i$th largest eigenvalue and the $i$th largest singular value of $\bm X$ are denoted by $\lambda_i(\bm X)$ and $\sigma_i(\bm X)$, respectively. The spectral radius of $\bm X$ is defined as $\rho(\bm X) = \max_{i} \abs{\lambda_i(\bm X)}$ and is less than or equal to the spectral norm, i.e., $\rho(\bm X) \leq \norm{\bm X}_2$ \cite{meyer2000matrix}. If $\bm X$ is square and invertible, the condition number of $\bm X$ is defined as $\kappa(\bm X) = \sigma_1(\bm X)/\sigma_n(\bm X)$. Finally, we use $\bm X \succ 0$ to indicate the matrix $\bm X$ is positive definite (PD) and $\bm X \succeq 0$ to indicate the matrix $\bm X$ is positive semi-definite (PSD).


\subsection{Real-valued Formulation of the UMLS Problem}

For the convenience of analysis, we consider the following real-valued reparametrization of (\ref{prob:orginal}):
\begin{align*}
    \min_{\bm x \in \R^{2N}} \quad &\frac{1}{2} \norm{\bm A \bm x - \bm b}^2 \\
    \text{ s.t. } \quad & x_{2i-1}^2 + x_{2i}^2 = 1 \text{ for } i=1,\ldots,N , \numberthis \label{prob:umls0}
\end{align*}
where $\bm A \in \R^{2M \times 2N}$ is partitioned into $2 \times 2$ blocks of form
\begin{align} \label{equ:A_ij}
    \tilde{\bm A}_{ij} = \begin{bmatrix} \Re(\Phi_{ij}) & -\Im(\Phi_{ij}) \\ \Im(\Phi_{ij}) & \Re(\Phi_{ij}) \end{bmatrix}, 
\end{align}
for $i=1,\ldots,M$ and $j=1,\ldots,N$.
In addition, $\bm x = [\Re(w_1),\Im(w_1),\ldots,\Re(w_N),\Im(w_N)]^\topnew$ and $\bm b = [\Re(h_1),\Im(h_1),\ldots,\Re(h_M),\Im(h_M)]^\topnew$ are real-valued vectors.
Next, we introduce the concepts of the 2-selection operator that selects the $i$th coordinate pair of a $2N$-dimensional vector. Since the unit-modulus constraint involves every pair of coordinates of $\bm x$, this operator allows us to simplify the representation of our result throughout the rest of the paper:
\begin{definition} \label{def:Si}
For each $i=1,\ldots,N$, the $i$th \textbf{2-selection operator} is defined by $\bm S_i: \R^{2N} \to \R^2$ such that
\begin{align*}
    \bm S_i(\bm x) = \begin{bmatrix} x_{2i-1} \\ x_{2i} \end{bmatrix} ,
\end{align*}
where ${\bm x}=[x_1,x_2,\ldots,x_{2N}]^\topnew$.
\end{definition}
\noindent It is noteworthy that the 2-selection operators is linear.
Using this operator, we can represent any vector $\bm x \in \R^{2N}$ as 
\begin{align} \label{equ:e_S}
    \bm x = \sum_{i=1}^N \bm e_i \otimes \bm S_i(\bm x) ,
\end{align}
where $\bm e_i$ is the $i$th vector in the natural basis of $\R^N$. Now we define the constraint set of the UMLS problem (\ref{prob:umls}) based on the 2-selection operator.
\begin{definition} \label{def:C}
The \textbf{unit-modulus set} is defined by
\begin{align} \label{equ:C}
    \C = \{ \bm x \in \R^{2N} : \norm{\bm S_i(\bm x)}^2 = 1 , \forall i=1,\ldots,N \} .
\end{align}
\end{definition}
\noindent Using Definition~\ref{def:C}, one can rewrite the optimization problem (\ref{prob:umls0}) as follows
\begin{align} \label{prob:umls}
    \min_{\bm x \in \C} \frac{1}{2} \norm{\bm A \bm x - \bm b}^2 .
\end{align}
For convenience, we denote the objective $f(\bm x) = \frac{1}{2} \norm{\bm A \bm x - \bm b}^2$.

\subsection{Projected Gradient Descent for UMLS}

To define the projection onto the unit-modulus set $\C$, let us introduce the distance function from a point $\bm x \in \R^{2N}$ to $\C$ as
\begin{align}
    d(\bm x, \C) = \inf_{\bm y \in \C} \{ \norm{\bm y - \bm x} \} .
\end{align}
The set of all projections of $\bm x$ onto $\C$ is then given by
\begin{align} \label{equ:PC}
    \Pi_\C (\bm x) = \{ \bm y \in \C \mid \norm{\bm y - \bm x} = d(\bm x, \C) \} .
\end{align}
It is well-known \cite{vasilyev2013depth} that if $\C$ is closed, then for any $\bm x \in \R^n$, $\Pi_\C (\bm x)$ is non-empty. Additionally, since the unit-modulus set $\C$ is non-convex, $\Pi_\C (\bm x)$ can have more than one element.
An orthogonal projection onto $\C$ is defined as $\P_\C: \R^{2N} \to \C$ such that $\P_\C (\bm x)$ is chosen as an element of $\Pi_\C (\bm x)$ based on a prescribed scheme (e.g., based on lexicographic order).  
In particular, we define the orthogonal projection $\P_\C(\bm x)$ as projecting each coordinate pair of $\bm x \in \R^{2N}$ onto the unit 1-sphere
\begin{align} \label{equ:Pc}
    \bm S_i \bigl(\P_\C (\bm x)\bigr) = \begin{cases}
        \frac{\bm S_i (\bm x)}{\norm{\bm S_i (\bm x)}} &\text{ if } \bm S_i (\bm x) \neq \bm 0_2 , \\
        [1,0]^\topnew \triangleq \bm s &\text{ if } \bm S_i (\bm x) = \bm 0_2 ,
    \end{cases}
\end{align}
for $i=1,\ldots,N$, where $\bm S_i(\cdot)$ is given in Definition~\ref{def:Si}.
It is noted that when $\bm S_i (\bm x) = \bm 0_2$, the set of projections of $\bm 0_2$ onto the unit 1-sphere is non-singleton, i.e., the entire unit 1-sphere. In such case, we choose a certain element $\bm s$ in this set (e.g., $[1,0]^\topnew$) as the value of $\bm S_i \bigl(\P_\C (\bm x)\bigr)$.
We emphasize that this choice of the projection does not affect our subsequent analysis of local convergence.

Starting from some initial point $\bm x^{(0)}$, the PGD algorithm for solving (\ref{prob:umls}) performs the following iterative update (see Algorithm~\ref{algo:PGD}):
\begin{align} \label{equ:pgd}
    \bm x^{(k+1)} = \P_\C \bigl( \bm x^{(k)} - \eta \bm A^\topnew (\bm A \bm x^{(k)} - \bm b) \bigr) ,
\end{align}
where $\eta>0$ is a fixed step size. In the literature, PGD is also known as the gradient projection (GP) algorithm (e.g., \cite{tranter2017fast}).

\begin{algorithm}[t]
\caption{Projected Gradient Descent (PGD)}
\label{algo:PGD}
\begin{algorithmic}[1]
\Require{$\bm x^{(0)} \in \R^{2N}$}
\Ensure{$\{\bm x^{(k)}\}_{k=0}$}
\For{$k=0,1,\ldots$}
\State $\bm x^{(k+1)} = \P_\C \bigl( \bm x^{(k)} - \eta \bm A^\topnew (\bm A \bm x^{(k)} - \bm b) \bigr)$
\State
\Comment{where $\P_\C$ is defined in (\ref{equ:Pc})}
\EndFor
\end{algorithmic}
\end{algorithm}

\section{Preliminaries}
\label{sec:prel}

This section presents a brief overview of existing results on convergence analysis of PGD for UMLS and the related problem of least squares with unit-norm constraint.

\subsection{Existing Convergence Results on PGD for UMLS}

The sublinear convergence of PGD to a set of stationary points of UMLS was studied in \cite{tranter2017fast}. First, Tranter~\textit{et.~al.} showed that any limiting point $\bm x^*$ of the sequence $\{\bm x^{(k)}\}_{k=0}^\infty$ generated by Algorithm~\ref{algo:PGD} is also a stationary point of (\ref{prob:umls}).
Second, they proved that for PGD with a fixed step size $0<\eta<1/\norm{\bm A}_2^2$, the convergence of $\{\bm x^{(k)}\}_{k=0}^\infty$ to a set of stationary points of (\ref{prob:umls}) is sublinear. In particular, the authors provided a sublinear bound on the distance between two consecutive iterates as follows\footnote{We note that in \cite{tranter2017fast}, the authors actually derived the convergence bound on a surrogate function $Q(\cdot)$ that quantifies the stationarity condition of (\ref{prob:umls}). From Eqn.~(23b) in \cite{tranter2017fast}, we have the value of $Q(\cdot)$ at iteration $k$ equals to $\frac{1}{\eta^2} \norm{\bm x^{(k+1)} - \bm x^{(k)}}^2$. In the literature, such convergence metric is related to the generalized gradient norm, (e.g., \cite{beck2017first}-Section~2.3.2).}
\begin{align} \label{equ:sublinear}
    \min_{0 \leq l \leq k-1} \norm{\bm x^{(l+1)} - \bm x^{(l)}} \leq \sqrt{\frac{2 \eta \bigl(f(\bm x^{(0)}) - f(\bm x^*) \bigr)}{(1-\eta \norm{\bm A}_2^2) k}} .
\end{align}
However, it is noted that the sublinear bound given by (\ref{equ:sublinear}) is based on the worst-case analysis. In practice, we observe the algorithm enjoys fast linear convergence to a local minimum $\bm x^*$ of (\ref{prob:umls}). Figure~\ref{fig:sublinear} illustrates the striking difference between the sublinear bound on $\norm{\bm x^{(k+1)} - \bm x^{(k)}}$ given by the RHS of (\ref{equ:sublinear}) (blue dashed line) and the corresponding linearly converging empirical value obtained by running the PGD algorithm (blue solid line).
The additional bound on $\norm{\bm x^{(k+1)} - \bm x^{(k)}}$ (red dashed line) is derived from the bound on $\norm{\bm x^{(k)} - \bm x^*}$ given by (\ref{equ:exp_decrease}) in the next section and the application of triangle inequality: $\norm{\bm x^{(k+1)} - \bm x^{(k)}} \leq \norm{\bm x^{(k+1)} - \bm x^*} + \norm{\bm x^{(k)} - \bm x^*}$. We observe that the red dashed line and the blue solid line are parallel to each other, while the blue dashed line deviates quickly from the other two lines as $k$ increases.
In the next section, we study this unexplained convergence phenomenon of PGD for UMLS.
We will provide exact formulations of the linear convergence rate and the region of convergence. 
The selection of the fixed step size $0<\eta<1/\norm{\bm A}_2^2$ in \cite{tranter2017fast} is conservative as it may exclude the optimal choice of $\eta$. We will demonstrate in our simulation that larger step sizes enable faster convergence of PGD for UMLS.

\begin{figure}[t]
    \centering
    \includegraphics[width=\columnwidth]{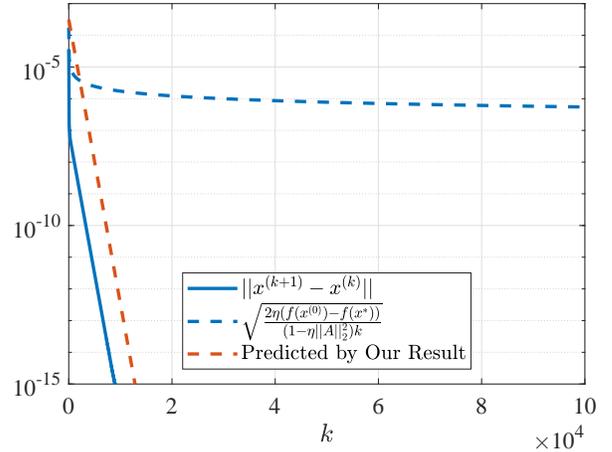}
    \caption{Plot of $\norm{\bm x^{(k+1)}-\bm x^{(k)}}$ (blue solid) generated by PGD for UMLS with a fixed step size $\eta = 0.9/ \norm{\bm A}_2^2$. The blue dashed line represents the sublinear bound given by (\ref{equ:sublinear}). The red dashed line is based on our linear upper bound proposed in this work. Further details of the data generated for this figure are given later in our simulation in Section~\ref{sec:exp}.}
    \label{fig:sublinear}
\end{figure}

\subsection{Least Squares with Unit-Norm Constraint}

\begin{table*}[t]
    \centering
    \begin{tabular}{|l|c|c|}
        \hline 
        & \textbf{Unit-norm constraint} \cite{vu2019convergence} & \textbf{Unit-modulus constraint} (this work) \\ \hline 
        Problem formulation & $\min_{\bm x \in \R^N} \frac{1}{2} \norm{\bm A \bm x - \bm b}^2$ s.t. $\norm{\bm x}=1$ & $\min_{\bm x \in \R^{2N}} \frac{1}{2} \norm{\bm A \bm x - \bm b}^2$ s.t. $\norm{\bm S_i (\bm x)}=1 , \forall i=1,\ldots,N$ \\ \hline
        First-order necessary condition & $\exists \gamma \in \R: \bm A^\topnew (\bm A^\topnew \bm x^* - \bm b) = \gamma \bm x^*$ & $\exists \bm \gamma \in \R^N: \bm A^\topnew (\bm A \bm x^* - \bm b) = (\diag(\bm \gamma) \otimes \bm I_2) \bm x^*$ \\ \hline
        Reduced Riemannian Hessian & $\bm H = \bm Z^\topnew \bm A^\topnew \bm A \bm Z - \gamma \bm I_N$ & $\bm H = \bm Z^\topnew \bm A^\topnew \bm A \bm Z - \diag(\bm \gamma)$ \\ \hline
        Second-order \textit{necessary} condition & $\bm H \succeq \bm 0_N$ & $\bm H \succeq \bm 0_N$ \\ \hline
        Second-order \textit{sufficient} condition & $\bm H \succ \bm 0_N$ & $\bm H \succ \bm 0_N$ \\ \hline
        Fixed-point condition on step size & $1 - \eta \gamma > 0$ & $\bm I_N - \eta \diag(\bm \gamma) \succ \bm 0_N$ \\ \hline
        Convergence condition on the step size & $\eta \bigl(\lambda_1(\bm H) + 2 \gamma \bigr) < 2$ & $\eta \bigl(\lambda_1(\bm H) + 2 \max_i \gamma_i \bigr) < 2$ \protect\footnotemark \\ \hline
        Linear convergence rate & $\rho \bigl( \bm I_N - \eta (1-\eta \gamma)^{-1} \bm H \bigr) $ & $\rho \bigl( \bm I_N - \eta (\bm I_N - \eta \diag(\bm \gamma))^{-1} \bm H \bigr)$ \\ \hline
    \end{tabular}
    \caption{Comparison between the existing convergence analysis of PGD for least squares with unit-norm constraint \cite{vu2019convergence} and the novel convergence analysis of PGD for unit-modulus constraint proposed in this paper. In each case, $\bm x^*$ is a stationary point and $\bm Z$ is a basis matrix for the null space of the Jacobian of all constraints at $\bm x^*$.}
    \label{tbl:compare}
\end{table*}
\footnotetext{This is a more intuitive but not the most general constraint on the step size. The original version of this condition on the step size is given in Theorem~\ref{theo:rate}.}

A closely-related problem to UMLS is the unit-norm least squares (UNLS)
\begin{align*}
    \min_{\bm x \in \R^{N}} \quad &\frac{1}{2} \norm{\bm A \bm x - \bm b}^2 \\
    \text{ s.t. } \quad & \norm{\bm x}^2=1 , \numberthis \label{prob:sphere}
\end{align*}
where $\bm A \in \R^{M \times N}$ and $\bm b \in \R^N$. While UMLS requires each of the $N$ coordinate pairs of the solution lies on the unit $1$-sphere, UNLS requires the solution itself lies on the $N-1$-sphere. Unlike the case of unit-modulus constraint, minimizing a quadratic form over the unit sphere is not NP-hard and is solvable as an eigenvalue problem \cite{sorensen1997minimization,hager2001minimizing}.
The convergence of PGD for UNLS has recently been studied in \cite{beck2018globally,vu2019convergence}.
Table~\ref{tbl:compare} summarizes the existing convergence result on UNLS and the new convergence result on UMLS we derive in this paper, highlighting the connection between the two works.

\section{Convergence Analysis}
\label{sec:analysis}

This section presents the convergence analysis of PGD for UMLS. We begin with the properties of the solution of the problem and the PGD algorithm. Next, we present the main result on the convergence of PGD for UMLS. Finally, we provide the detailed proof at the end of the section.

\subsection{Solution Properties}

The Lagrange function corresponding to (\ref{prob:umls}) is given by
\begin{align*}
    L(\bm x, \bm \gamma) = \frac{1}{2} \norm{\bm A \bm x - \bm b}^2 - \frac{1}{2} \sum_{i=1}^N \gamma_i (x_{2i-1}^2 + x_{2i}^2 - 1) ,
\end{align*}
where $\bm \gamma \in \R^N$ is the Lagrange multiplier.
The derivatives of $L$ with respect to $\bm x$ can be computed as
\begin{align} \label{equ:Lagrange}
\begin{cases}
    \nabla_{\bm x} L(\bm x, \bm \gamma) &= \bm A^\topnew (\bm A \bm x - \bm b) - (\diag(\bm \gamma) \otimes \bm I_2) \bm x , \\
    \nabla_{\bm x}^2 L(\bm x, \bm \gamma) &= \bm A^\topnew \bm A - \diag(\bm \gamma) \otimes \bm I_2 .
\end{cases}
\end{align}
It can be shown that any feasible point $\bm x \in \C$ is also a regular point of the constraint set. Specifically, we first represent the constraints as $\bm h : \R^{2N} \to \R^N$ such that $\bm h (\bm x) = \bm 0_N$, where $h_i(\bm x) = \norm{\bm S_i (\bm x)}^2 - 1$ for $i=1,\ldots,N$. Then, the Jacobian of all the constraints at $\bm x$, defined as $J_{ij}=\partial h_i(\bm x) / \partial x_j$, is given by
\begin{align*}
    \bm J (\bm x) = \begin{bmatrix} \bm e_1^\topnew \otimes \bm S_1^\topnew (\bm x) \\ \ldots \\ \bm e_N^\topnew \otimes \bm S_N^\topnew (\bm x) \end{bmatrix} \in \R^{N \times 2N} .
\end{align*}
Since $\bm J (\bm x)$ is full row rank for any $\bm x \in \C$, $\bm x$ is a regular point of the constraint set (see Chapter~11 in \cite{luenberger1984linear}).
The following lemma establishes the first-order necessary conditions for local optima of UMLS problems.
\begin{lemma} \label{lem:KKT}
The first-order necessary conditions for $\bm x^* \in \R^{2N}$ to be a local minimum of (\ref{prob:umls}) are $\bm x^* \in \C$ and there exists a Lagrange multiplier $\bm \gamma \triangleq \bm \gamma(\bm x^*) \in \R^N$ such that
\begin{align} \label{equ:KKT1}
    \bm A^\topnew (\bm A \bm x^* - \bm b) = (\diag(\bm \gamma) \otimes \bm I_2) \bm x^* .
\end{align}
Any point satisfying the foregoing first-order necessary conditions is called a \textbf{stationary} point of (\ref{prob:umls}).
\end{lemma}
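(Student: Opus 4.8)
The plan is to deduce the lemma from the classical first-order (Lagrange-multiplier) necessary conditions for equality-constrained minimization, whose hypotheses have already been verified in the text immediately above: the objective $f(\bm x)=\tfrac12\norm{\bm A\bm x-\bm b}^2$ and the constraint map $\bm h(\bm x)=\bm 0_N$ with $h_i(\bm x)=\norm{\bm S_i(\bm x)}^2-1$ are continuously differentiable, and every $\bm x\in\C$ is a regular point because the Jacobian $\bm J(\bm x)$ has full row rank $N$. First I would note that a local minimum $\bm x^*$ of (\ref{prob:umls}) is by definition feasible, giving $\bm x^*\in\C$. Then, invoking the regular-point Lagrange-multiplier theorem (Chapter~11 in \cite{luenberger1984linear}) at $\bm x^*$, there exists $\bm\gamma\in\R^N$ with $\nabla_{\bm x}L(\bm x^*,\bm\gamma)=\bm 0_{2N}$ for the Lagrangian $L$ introduced above; by the first line of (\ref{equ:Lagrange}) this reads $\bm A^\topnew(\bm A\bm x^*-\bm b)-(\diag(\bm\gamma)\otimes\bm I_2)\bm x^*=\bm 0_{2N}$, which is exactly (\ref{equ:KKT1}).

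The only thing left to spell out is that the ``reduced'' block-diagonal form $(\diag(\bm\gamma)\otimes\bm I_2)\bm x^*$ appearing in (\ref{equ:KKT1}) really is the gradient term $\sum_{i=1}^N\gamma_i\nabla h_i(\bm x^*)=\bm J(\bm x^*)^\topnew\bm\gamma$ coming out of the theorem. Reading off the rows of $\bm J$, its transpose has columns $\bm e_i\otimes\bm S_i(\bm x^*)$, so $\bm J(\bm x^*)^\topnew\bm\gamma=\sum_{i=1}^N\gamma_i\,\bm e_i\otimes\bm S_i(\bm x^*)$. Applying the mixed-product property of the Kronecker product to the representation (\ref{equ:e_S}), $\bm x^*=\sum_{i=1}^N\bm e_i\otimes\bm S_i(\bm x^*)$, gives $(\diag(\bm\gamma)\otimes\bm I_2)\bm x^*=\sum_{i=1}^N(\diag(\bm\gamma)\bm e_i)\otimes(\bm I_2\bm S_i(\bm x^*))=\sum_{i=1}^N\gamma_i\,\bm e_i\otimes\bm S_i(\bm x^*)$, and the two expressions agree. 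Since $\bm J(\bm x^*)$ has full row rank, $\bm J(\bm x^*)^\topnew$ is injective, so the multiplier is in fact uniquely determined by $\bm x^*$, which justifies writing $\bm\gamma\triangleq\bm\gamma(\bm x^*)$.

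I do not expect a serious obstacle here: once regularity is established (already done), the statement is a textbook consequence followed by a routine Kronecker-algebra identity. The one place warranting a little care is matching conventions — the half-factor in the Lagrangian $-\tfrac12\sum_i\gamma_i(x_{2i-1}^2+x_{2i}^2-1)$ cancels the factor $2$ in $\nabla h_i$, so no rescaling of $\bm\gamma$ is needed, and the sign is fixed by requiring $\nabla_{\bm x}L=\bm 0$. (A self-contained alternative would be to locally parametrize each circle $x_{2i-1}^2+x_{2i}^2=1$ by an angle $\theta_i$, reducing (\ref{prob:umls}) near $\bm x^*$ to an unconstrained smooth minimization over $\bm\theta\in\R^N$ and setting its gradient to zero; this yields the same condition but is more computational, so I would keep the regular-point argument.)
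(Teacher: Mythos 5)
Your proposal is correct and follows essentially the same route as the paper, which likewise sets $\nabla_{\bm x} L(\bm x,\bm\gamma)=\bm 0$ in (\ref{equ:Lagrange}) and cites the regular-point Lagrange-multiplier theorem of Chapter~11 in \cite{luenberger1984linear}, regularity having been established just before the lemma. Your extra verification that $\bm J(\bm x^*)^\topnew\bm\gamma=(\diag(\bm\gamma)\otimes\bm I_2)\bm x^*$ via the Kronecker mixed-product identity, and the observation that full row rank of $\bm J(\bm x^*)$ makes $\bm\gamma$ unique, are correct details the paper leaves implicit.
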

\noindent By setting $\nabla_{\bm x} L(\bm x, \bm \gamma)$ in (\ref{equ:Lagrange}) to $\bm 0$, the proof of Lemma~\ref{lem:KKT} follows the same derivation in \cite{luenberger1984linear}-Chapter~11.3. 
Next, we examine the second-order conditions for local optima of problem (\ref{prob:umls}) via the basis of the tangent space to $\C$ at $\bm x^*$. The following lemma provides further insight into these conditions.
\begin{lemma} \label{lem:KKT2}
Let $\bm x^*$ be a stationary point of problem (\ref{prob:umls}) with the corresponding Lagrange multiplier $\bm \gamma$. A basis of the tangent space to $\C$ at $\bm x^*$ is given by the semi-orthogonal matrix $\bm Z \in \R^{2N \times N}$ such that
\begin{align} \label{equ:Z}
    \bm Z = \sum_{i=1}^N \bm e_i \bm e_i^\topnew \otimes \bm v_i ,
\end{align}
where $\bm v_i = [-x^*_{2i},x^*_{2i-1}]^\topnew$.
Denote the \textbf{reduced Riemannian Hessian} associated with $\bm x^*$ by
\begin{align} \label{equ:H}
    \bm H = \bm Z^\topnew \bm A^\topnew \bm A \bm Z - \diag(\bm \gamma) .
\end{align}
The second-order necessary condition for $\bm x^*$ to be a local minimum of (\ref{prob:umls}) is $\bm H \succeq \bm 0_N$. The second-order sufficient condition for $\bm x^*$ to be a \textbf{strict} local minimum of (\ref{prob:umls}) is $\bm H \succ \bm 0_N$.
\end{lemma}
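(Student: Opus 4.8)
The plan is to establish the lemma in two stages: first showing that $\bm Z$ from (\ref{equ:Z}) is a semi-orthogonal basis matrix for the tangent space $T_{\bm x^*}\C$, and then showing that the classical second-order optimality conditions, restricted to that tangent space, collapse exactly to conditions on $\bm H$ in (\ref{equ:H}). For the first stage, since $\bm x^*$ is a stationary point we have $\bm x^*\in\C$, so $\norm{\bm S_i(\bm x^*)}^2=(x^*_{2i-1})^2+(x^*_{2i})^2=1$ and hence $\norm{\bm v_i}^2=1$ for every $i$. I would then compute, using the mixed-product property of the Kronecker product together with $\bm e_i^\topnew\bm e_j=\delta_{ij}$, that $\bm Z^\topnew\bm Z=\sum_i\bm e_i\bm e_i^\topnew\otimes\norm{\bm v_i}^2=\bm I_N$, which makes $\bm Z$ semi-orthogonal and, in particular, of full column rank $N$. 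Next, noting that the $j$th column of $\bm Z$ is $\bm e_j\otimes\bm v_j$ and the $i$th row of the Jacobian $\bm J(\bm x^*)$ is $\bm e_i^\topnew\otimes\bm S_i^\topnew(\bm x^*)$, and that $\bm v_i$ is $\bm S_i(\bm x^*)$ rotated by ninety degrees so that $\bm S_i^\topnew(\bm x^*)\bm v_i=-x^*_{2i-1}x^*_{2i}+x^*_{2i}x^*_{2i-1}=0$, the same mixed-product identity gives $\bm J(\bm x^*)\bm Z=\bm 0$. Since $\bm x^*$ is a regular point, $\rank\bm J(\bm x^*)=N$, so $T_{\bm x^*}\C=\ker\bm J(\bm x^*)$ has dimension $2N-N=N$; the $N$ independent columns of $\bm Z$ therefore form a basis.

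For the second stage, I would invoke the classical second-order conditions for an equality-constrained program at a regular point (e.g., \cite{luenberger1984linear}, Ch.~11): the necessary condition for a local minimum is $\bm y^\topnew\nabla_{\bm x}^2 L(\bm x^*,\bm\gamma)\,\bm y\ge 0$ for all $\bm y\in T_{\bm x^*}\C$, and the sufficient condition for a strict local minimum is the same inequality made strict for every nonzero $\bm y\in T_{\bm x^*}\C$. Writing each tangent vector as $\bm y=\bm Z\bm z$ with $\bm z\in\R^N$ (legitimate by the first stage), these become $\bm Z^\topnew\nabla_{\bm x}^2 L(\bm x^*,\bm\gamma)\,\bm Z\succeq\bm 0_N$ and $\succ\bm 0_N$, respectively. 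It then remains to identify this reduced Hessian with $\bm H$: from (\ref{equ:Lagrange}), $\nabla_{\bm x}^2 L(\bm x^*,\bm\gamma)=\bm A^\topnew\bm A-\diag(\bm\gamma)\otimes\bm I_2$, so $\bm Z^\topnew\nabla_{\bm x}^2 L\,\bm Z=\bm Z^\topnew\bm A^\topnew\bm A\bm Z-\bm Z^\topnew(\diag(\bm\gamma)\otimes\bm I_2)\bm Z$, and using $\bm e_i^\topnew\diag(\bm\gamma)=\gamma_i\bm e_i^\topnew$ together with $\norm{\bm v_i}^2=1$ the last term reduces to $\sum_i\gamma_i\bm e_i\bm e_i^\topnew=\diag(\bm\gamma)$. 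Hence $\bm Z^\topnew\nabla_{\bm x}^2 L\,\bm Z=\bm Z^\topnew\bm A^\topnew\bm A\bm Z-\diag(\bm\gamma)=\bm H$, and the stated necessary and sufficient conditions follow.

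The computations themselves are routine Kronecker bookkeeping, so I expect the main obstacle to be the basis claim in the first stage: the reduction of the second-order conditions to statements about $\bm H$ hinges on $\bm Z$ being a genuine basis of $T_{\bm x^*}\C$, not merely some matrix annihilated by $\bm J(\bm x^*)$. Establishing this cleanly requires both the independence of the columns (from semi-orthogonality) and a dimension count ($\dim T_{\bm x^*}\C=N$ via regularity of $\bm x^*$, already noted in the text); once that is in place, the remainder is substitution and the mixed-product property.
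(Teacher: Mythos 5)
Your proof is correct and follows essentially the same route as the paper's: identify $T_{\bm x^*}\C$ with the column space of $\bm Z$, invoke the classical second-order conditions from \cite{luenberger1984linear} Ch.~11, and reduce $\bm Z^\topnew \nabla_{\bm x}^2 L(\bm x^*,\bm\gamma)\bm Z$ to $\bm H$ via $\bm Z^\topnew(\diag(\bm\gamma)\otimes\bm I_2)\bm Z=\diag(\bm\gamma)$. You are in fact somewhat more thorough than the paper, which simply asserts the basis claim, whereas you justify it via semi-orthogonality, $\bm J(\bm x^*)\bm Z=\bm 0$, and the dimension count from regularity; the paper delegates the Kronecker commutation to its auxiliary Lemma~\ref{lem:ZD} while you carry it out directly.
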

\noindent The proof of Lemma~\ref{lem:KKT2} is given in Appendix~\ref{appdx:KKT2}.

\begin{remark} \label{rmk:H_Riemannian}
The concept of Riemannian Hessian has been well-studied in differential geometry (e.g., \cite{lee2018introduction}). From (\ref{equ:H}), one can see that the first term takes into account the curvature of the objective function restricted to the unit-modulus manifold $\C$. On the other hand, the second term characterizes the curvature of the manifold $\C$. 
While this is an elementary result in differential geometry, we include the proof detail in Appendix~\ref{appdx:H_Riemannian} for self-containedness.
\end{remark}

\subsection{Algorithm Properties}

The PGD algorithm can be viewed as a fixed-point iteration and hence, can be analyzed via the existing tools from fixed-point theory. We first define the convergent point of the PGD update (\ref{equ:pgd}) as follows.
\begin{definition} \label{def:fixed_point}
The point $\bm x \in \C$ is a \textbf{fixed point} of Algorithm~\ref{algo:PGD} with step size $\eta>0$ if it satisfies
\begin{align} \label{equ:fixed_point}
    \bm x = \P_\C \bigl( \bm x - \eta \bm A^\topnew (\bm A \bm x - \bm b) \bigr) .
\end{align}
\end{definition}
\noindent If the constraint set $\C$ is convex, any fixed point of Algorithm~\ref{algo:PGD} is also an optimal solution of the constrained least squares problem \cite{bertsekas1997nonlinear}. Since the unit-modulus constraint set is non-convex, we show that any fixed point of Algorithm~\ref{algo:PGD} is a stationary point of (\ref{prob:umls}) as follows.
\begin{lemma} \label{lem:fixed}
The vector $\bm x^*$ is a fixed point of Algorithm~\ref{algo:PGD} with step size $\eta>0$ if and only if $\bm x^*$ is a stationary point of the non-convex problem (\ref{prob:umls}) and the corresponding Lagrange multiplier $\bm \gamma$ satisfies 
\begin{align} \label{equ:fixed}
    \begin{cases}
        \gamma_i < 1/\eta &\text{ if } \bm S_i(\bm x^*) \neq \bm s \\
        \gamma_i \leq 1/\eta &\text{ if } \bm S_i(\bm x^*) = \bm s
    \end{cases} \quad \forall i=1,\ldots,N ,
\end{align}
where $\bm s$ is defined in (\ref{equ:Pc}).
\end{lemma}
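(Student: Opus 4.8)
The plan is to exploit the coordinate-pair separability of $\P_\C$ and reduce the single fixed-point equation to $N$ decoupled scalar-vector conditions. Write $\bm g^* \triangleq \bm A^\topnew(\bm A \bm x^* - \bm b)$ for the gradient of $f$ at $\bm x^*$. Since the $i$th pair of $\P_\C(\bm y)$ depends only on $\bm S_i(\bm y)$ by (\ref{equ:Pc}), and since $\bm S_i$ is linear so that $\bm S_i(\bm x^* - \eta \bm g^*) = \bm S_i(\bm x^*) - \eta \bm S_i(\bm g^*)$, the fixed-point equation (\ref{equ:fixed_point}) is equivalent to
\begin{align*}
    \bm S_i(\bm x^*) = \bm S_i\bigl(\P_\C(\bm x^* - \eta \bm g^*)\bigr), \qquad i = 1,\ldots,N .
\end{align*}
First I would note that any such $\bm x^*$ is automatically feasible, $\bm x^* \in \C$, because $\P_\C$ maps into $\C$. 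It then suffices to analyze each of the $N$ conditions separately via the two branches of (\ref{equ:Pc}).

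In the generic branch $\bm S_i(\bm x^* - \eta \bm g^*) \neq \bm 0_2$, the $i$th condition reads $\bm S_i(\bm x^*) = \bigl(\bm S_i(\bm x^*) - \eta \bm S_i(\bm g^*)\bigr) / \norm{\bm S_i(\bm x^*) - \eta \bm S_i(\bm g^*)}$; since $\norm{\bm S_i(\bm x^*)} = 1$, this forces $\bm S_i(\bm x^*) - \eta \bm S_i(\bm g^*)$ to be the \emph{positive} multiple $\beta_i \triangleq \norm{\bm S_i(\bm x^*) - \eta \bm S_i(\bm g^*)} > 0$ of $\bm S_i(\bm x^*)$ (an anti-parallel alternative would normalize to $-\bm S_i(\bm x^*) \neq \bm S_i(\bm x^*)$). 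Rearranging gives $\bm S_i(\bm g^*) = \gamma_i \bm S_i(\bm x^*)$ with $\gamma_i \triangleq (1 - \beta_i)/\eta < 1/\eta$. In the degenerate branch $\bm S_i(\bm x^* - \eta \bm g^*) = \bm 0_2$, we have $\bm S_i(\P_\C(\cdot)) = \bm s$, so the condition forces $\bm S_i(\bm x^*) = \bm s$, while $\bm S_i(\bm x^*) = \eta \bm S_i(\bm g^*)$ yields $\bm S_i(\bm g^*) = \gamma_i \bm S_i(\bm x^*)$ with $\gamma_i = 1/\eta$. Conversely, each of these two situations is easily checked to satisfy the $i$th condition. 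Hence the $i$th condition holds if and only if $\bm S_i(\bm g^*) = \gamma_i \bm S_i(\bm x^*)$ for the unique scalar $\gamma_i = \bm S_i(\bm x^*)^\topnew \bm S_i(\bm g^*)$, subject to $\gamma_i < 1/\eta$ when $\bm S_i(\bm x^*) \neq \bm s$ and $\gamma_i \leq 1/\eta$ when $\bm S_i(\bm x^*) = \bm s$.

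The last step is to reassemble the $N$ pairwise identities into vector form. Using the expansion (\ref{equ:e_S}) together with the identity $\bm S_i\bigl((\diag(\bm \gamma) \otimes \bm I_2) \bm x\bigr) = \gamma_i \bm S_i(\bm x)$, the system $\{\bm S_i(\bm g^*) = \gamma_i \bm S_i(\bm x^*)\}_{i=1}^N$ is equivalent to $\bm A^\topnew(\bm A \bm x^* - \bm b) = (\diag(\bm \gamma) \otimes \bm I_2)\bm x^*$, which, with $\bm x^* \in \C$, is exactly the stationarity characterization of Lemma~\ref{lem:KKT}; the accompanying constraints on the $\gamma_i$ are exactly (\ref{equ:fixed}). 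This establishes both implications at once. The main obstacle — really the only delicate point — is the bookkeeping around the degenerate branch: one must verify that $\bm S_i(\bm x^* - \eta \bm g^*) = \bm 0_2$ can occur only when $\bm S_i(\bm x^*) = \bm s$, so that the strict bound $\gamma_i < 1/\eta$ is relaxed to $\gamma_i \leq 1/\eta$ precisely on the indices where the projection of $\bm 0_2$ had to be pinned down by convention, and that this conventional choice of $\bm s$ does not affect the argument elsewhere.
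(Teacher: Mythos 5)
Your proposal is correct and follows essentially the same route as the paper's proof: decompose the fixed-point equation into $N$ coordinate-pair conditions via the separability of $\P_\C$, split on the two branches of (\ref{equ:Pc}) to extract $\bm S_i(\bm g^*)=\gamma_i\bm S_i(\bm x^*)$ with $\gamma_i<1/\eta$ generically and $\gamma_i=1/\eta$ only when $\bm S_i(\bm x^*)=\bm s$, and reassemble into the stationarity condition (\ref{equ:KKT1}). The only cosmetic difference is that you organize the case split by whether $\bm S_i(\bm x^*-\eta\bm g^*)=\bm 0_2$ while the paper splits on whether $\bm S_i(\bm x^*)=\bm s$; the content and the handling of the degenerate branch are identical.
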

\noindent The proof of this lemma is given in Appendix~\ref{appdx:fixed}. 
Lemma~\ref{lem:fixed} suggests that when $\eta$ is sufficiently small, all stationary points of (\ref{prob:umls}) can be fixed points of Algorithm~\ref{algo:PGD}. As the step size $\eta$ increases, fewer stationary points satisfying (\ref{equ:fixed}) can be fixed points of the algorithm. 
Next, we study the first-order Taylor expansion of the projection $\P_\C$ about a point in $\C$ in the following proposition:
\begin{proposition} \label{prop:dPc}
For any $\bm x \in \C$ and $\bm \delta \in \R^{2N}$, we have
\begin{align} \label{equ:taylor}
    \P_\C (\bm x + \bm \delta) = \bm x + \bm Z \bm Z^\topnew \bm \delta + \bm q(\bm \delta) ,
\end{align}
where $\bm Z = \sum_{i=1}^N \bm e_i \bm e_i^\topnew \otimes \bm v_i$, for $\bm v_i = [-x_{2i},x_{2i-1}]^\topnew$, and $\bm q: \R^{2N} \to \R^{2N}$ satisfies $\norm{\bm q(\bm \delta)} \leq 2 \norm{\bm \delta}^2$.
\end{proposition}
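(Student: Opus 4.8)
The plan is to reduce the claim on $\P_\C$ to a coordinate-pair-by-coordinate-pair computation, since both $\P_\C$ and the matrix $\bm Z\bm Z^\topnew$ decouple across the $N$ blocks. Using Definition~\ref{def:Si} and equations~(\ref{equ:e_S}) and~(\ref{equ:Pc}), I would write $\bm S_i(\P_\C(\bm x + \bm \delta))$ in terms of $\bm u_i \triangleq \bm S_i(\bm x)$ and $\bm \delta_i \triangleq \bm S_i(\bm \delta)$, where $\norm{\bm u_i}=1$ since $\bm x \in \C$. For $\norm{\bm \delta_i}$ small enough that $\bm u_i + \bm \delta_i \neq \bm 0_2$, we have $\bm S_i(\P_\C(\bm x+\bm\delta)) = (\bm u_i + \bm \delta_i)/\norm{\bm u_i + \bm \delta_i}$, so the whole problem is the first-order expansion of the normalization map $\bm r \mapsto \bm r/\norm{\bm r}$ about the unit vector $\bm u_i$. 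For that map, the differential at a unit vector $\bm u_i$ is the orthogonal projector $\bm I_2 - \bm u_i \bm u_i^\topnew$ onto the tangent line, which is exactly $\bm v_i \bm v_i^\topnew$ with $\bm v_i = [-x_{2i}, x_{2i-1}]^\topnew$ (a unit vector orthogonal to $\bm u_i$). Hence the claimed linear term $\bm Z \bm Z^\topnew \bm \delta = \sum_i \bm e_i \otimes (\bm v_i \bm v_i^\topnew \bm \delta_i)$ falls out of $\bm Z = \sum_i \bm e_i \bm e_i^\topnew \otimes \bm v_i$ and the block structure.

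The main work is the explicit quadratic remainder bound $\norm{\bm q(\bm\delta)} \leq 2\norm{\bm\delta}^2$, which must be shown \emph{globally} in $\bm\delta$ (not just for small $\bm\delta$), because the statement has no smallness hypothesis. I would handle the two cases of~(\ref{equ:Pc}) separately. In the generic case $\bm u_i + \bm \delta_i \neq \bm 0_2$, I would write $\bm q_i \triangleq \bm S_i(\bm q(\bm\delta)) = \frac{\bm u_i + \bm \delta_i}{\norm{\bm u_i + \bm \delta_i}} - \bm u_i - \bm v_i\bm v_i^\topnew\bm\delta_i$ and bound $\norm{\bm q_i}$ by a direct algebraic estimate. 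A clean route: write $t = \norm{\bm u_i+\bm\delta_i}$, use $\norm{\bm u_i+\bm\delta_i}^2 = 1 + 2\bm u_i^\topnew\bm\delta_i + \norm{\bm\delta_i}^2$, decompose $\bm\delta_i = (\bm u_i^\topnew\bm\delta_i)\bm u_i + (\bm v_i^\topnew\bm\delta_i)\bm v_i$, and compare $\frac{1}{t}$ against the first-order coefficient. One obtains something of the form $\bm q_i = a\,\bm u_i + b\,\bm v_i$ with $a = \frac{1 + \bm u_i^\topnew\bm\delta_i}{t} - 1$ and $b = (\bm v_i^\topnew\bm\delta_i)\bigl(\tfrac{1}{t}-1\bigr)$; then bounding $|a|$ and $|b|$ in terms of $\norm{\bm\delta_i}$ using $|\bm u_i^\topnew\bm\delta_i|,|\bm v_i^\topnew\bm\delta_i| \le \norm{\bm\delta_i}$ and elementary inequalities for $\tfrac1t$ gives $\norm{\bm q_i}^2 = a^2 + b^2 \le 4\norm{\bm\delta_i}^4$, i.e. $\norm{\bm q_i} \le 2\norm{\bm\delta_i}^2$. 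In the degenerate case $\bm u_i + \bm \delta_i = \bm 0_2$ (so $\bm\delta_i = -\bm u_i$, $\norm{\bm\delta_i}=1$), we have $\bm S_i(\P_\C(\bm x+\bm\delta)) = \bm s$, and since $\bm s, \bm u_i, \bm v_i\bm v_i^\topnew\bm\delta_i$ are all vectors of norm at most $1$, the triangle inequality gives $\norm{\bm q_i} \le 3 = 3\norm{\bm\delta_i}^2$; I'd need to tighten this slightly (e.g.\ by noting $\bm v_i^\topnew\bm u_i = 0$ so $\bm v_i\bm v_i^\topnew\bm\delta_i = -\bm v_i\bm v_i^\topnew\bm u_i = \bm 0_2$, leaving $\norm{\bm q_i} = \norm{\bm s - \bm u_i} \le 2 = 2\norm{\bm\delta_i}^2$).

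Finally I would assemble the block bounds: since $\bm q(\bm\delta) = \sum_i \bm e_i \otimes \bm q_i$ with the $\bm e_i$ orthonormal, $\norm{\bm q(\bm\delta)}^2 = \sum_i \norm{\bm q_i}^2 \le \sum_i 4\norm{\bm\delta_i}^4 \le 4\bigl(\sum_i \norm{\bm\delta_i}^2\bigr)^2 = 4\norm{\bm\delta}^4$, using $\sum_i a_i^2 \le (\sum_i a_i)^2$ for nonnegative $a_i$ together with $\norm{\bm\delta}^2 = \sum_i \norm{\bm\delta_i}^2$ from~(\ref{equ:e_S}). Taking square roots yields $\norm{\bm q(\bm\delta)} \le 2\norm{\bm\delta}^2$, and combining the linear terms across blocks gives~(\ref{equ:taylor}). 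The one delicate point to get right is that the remainder bound must be uniform over \emph{all} $\bm\delta$, including the degenerate configurations, rather than an asymptotic $O(\norm{\bm\delta}^2)$ statement; the per-block algebra for the coefficient $a$ when $t$ is close to $0$ is where I expect to spend the most care.
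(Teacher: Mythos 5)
Your proposal follows essentially the same route as the paper's proof in Appendix~\ref{appdx:dPc}: reduce to the $N$ coordinate-pair blocks, expand the unit-sphere normalization map about each unit vector $\bm S_i(\bm x)$ with per-block remainder bounded by $2\norm{\bm S_i(\bm \delta)}^2$, and assemble via $\sum_{i=1}^N 4\norm{\bm S_i(\bm \delta)}^4 \leq 4\bigl(\sum_{i=1}^N \norm{\bm S_i(\bm \delta)}^2\bigr)^2 = 4\norm{\bm \delta}^4$. The only difference is that the paper obtains the per-block expansion by citing an external result (Lemma~\ref{lem:proj_unit}, rephrased from \cite{vu2021asymptotic}) rather than deriving it, whereas you sketch a direct algebraic derivation; your remainder coefficients $a$ and $b$ and your treatment of the degenerate case $\bm S_i(\bm x + \bm \delta) = \bm 0_2$ are consistent with that cited lemma, though the final elementary estimate giving the constant $2$ is left unexecuted in your sketch.
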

\noindent The proof of this proposition is given in Appendix~\ref{appdx:dPc}.
It is noteworthy from Proposition~\ref{prop:dPc} that the projection $\P_\C$ is differentiable at any $\bm x \in \C$.
Second, the derivative of $\P_\C$, given by $\bm Z \bm Z^\topnew$, coincides with the orthogonal projection onto the tangent space to $\C$ at $\bm x$ \cite{lewis2008alternating}. 
Third, the expansion (\ref{equ:taylor}) is universal, regardless of the magnitude of $\bm \delta$.

\subsection{Main Result}

We are now in position to state our main result on the linear convergence of PGD for UMLS. 
\begin{theorem} \label{theo:rate}
Consider a stationary point $\bm x^* \in \C$ of the UMLS problem (\ref{prob:umls}) with the corresponding Lagrange multiplier $\bm \gamma \triangleq \bm \gamma(\bm x^*) \in \R^N$ defined in (\ref{equ:KKT1}) and the reduced Riemannian Hessian $\bm H \triangleq \bm H(\bm x^*) \in \R^{N \times N}$ defined in (\ref{equ:H}).
Let $\{ \bm x^{(k)} \}_{k=0}^\infty \subset \R^{2N}$ be the sequence generated by Algorithm~\ref{algo:PGD} with a fixed step size $\eta>0$.
Assume that
\begin{enumerate}[leftmargin=30pt]
    \item[(C1)] $\bm H \succ \bm 0_N$ (sufficient condition for $\bm x^*$ being a strict local minimum),
    \item[(C2)] $\eta \gamma_i \neq 1$ for all $i=1,\ldots,N$, and
    \item[(C3)] $\rho(\bm M_\eta)<1$ where
    \begin{align} \label{def:Mn}
        \bm M_\eta = \bm I_N - \eta \Bigl(\bm I_N - \eta \diag(\bm \gamma) \Bigr)^{-1} \bm H .
    \end{align}
\end{enumerate}
Then, there exists a finite constant $c_0(\bm x^*,\eta)$ \footnote{A closed-form expression of $c_0(\bm x^*,\eta)$ is given in Lemma~\ref{lem:rho2}.} such that for any $\bm x^{(0)} \in \C$ satisfying $\norm{\bm x^{(0)} - \bm x^*} < c_0(\bm x^*,\eta)$, the sequence $\{ \norm{\bm x^{(k)} - \bm x^*} \}_{k=0}^\infty$ converges to $0$. 
Furthermore, if $\norm{\bm x^{(0)} - \bm x^*} < \rho(\bm M_\eta) c_0(\bm x^*,\eta)$, it holds for any integer $k \geq 0$ that
\begin{align} \label{equ:exp_decrease}
    \frac{\norm{\bm x^{(k)} - \bm x^*}}{\norm{\bm x^{(0)} - \bm x^*}} < \biggl(1-\frac{\norm{\bm x^{(0)} - \bm x^*}}{\rho(\bm M_\eta) c_0(\bm x^*,\eta)}\biggr)^{-1} \rho^k(\bm M_\eta) .
\end{align}
In (\ref{equ:exp_decrease}), Algorithm~\ref{algo:PGD} with fixed step size $\eta$ is said to converge \textbf{linearly} to $\bm x^*$ with a rate of $\rho(\bm M_\eta)$.
\end{theorem}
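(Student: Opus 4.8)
The plan is to reduce the analysis to a perturbed linear fixed-point iteration and then invoke a standard contraction-type argument with the spectral radius $\rho(\bm M_\eta)$ in place of an operator norm. First I would write $\bm e^{(k)} = \bm x^{(k)} - \bm x^*$ and expand one PGD step around $\bm x^*$. Using the fixed-point characterization (Lemma~\ref{lem:fixed}) together with the first-order optimality condition (\ref{equ:KKT1}), the gradient step $\bm x^{(k)} - \eta \bm A^\topnew(\bm A \bm x^{(k)} - \bm b)$ can be rewritten as $\bm x^* + \bigl(\bm I_{2N} - \eta \bm A^\topnew \bm A\bigr)\bm e^{(k)} + \eta(\diag(\bm\gamma)\otimes\bm I_2)\bm x^*$, and then Proposition~\ref{prop:dPc} gives $\bm x^{(k+1)} = \bm x^* + \bm Z\bm Z^\topnew\bigl[(\bm I_{2N} - \eta\bm A^\topnew\bm A)\bm e^{(k)} + \eta(\diag(\bm\gamma)\otimes\bm I_2)\bm x^*\bigr] + \bm q^{(k)}$ with $\norm{\bm q^{(k)}} \le 2\norm{\bm\delta^{(k)}}^2 \le c_1 \norm{\bm e^{(k)}}^2$ for some constant $c_1$ depending on $\eta$ and $\bm A$. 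I would then show that the linear-in-$\bm e^{(k)}$ part collapses, after projecting onto the tangent space and using the block structure of $\bm Z$, to a matrix that is similar to $\bm M_\eta$ acting on the reduced coordinates $\bm Z^\topnew \bm e^{(k)}$; the normal component $(\bm I_{2N} - \bm Z\bm Z^\topnew)\bm e^{(k)}$ is itself $\O(\norm{\bm e^{(k)}}^2)$ because both $\bm x^{(k)}$ and $\bm x^*$ lie on $\C$, so it can be absorbed into the quadratic remainder. This yields an effective recursion $\bm r^{(k+1)} = \bm M_\eta \bm r^{(k)} + \bm g^{(k)}$ with $\bm r^{(k)} = \bm Z^\topnew \bm e^{(k)}$, $\norm{\bm e^{(k)} - \bm Z \bm r^{(k)}} = \O(\norm{\bm e^{(k)}}^2)$, and $\norm{\bm g^{(k)}} \le c_2 \norm{\bm r^{(k)}}^2$.

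Next I would pass from the recursion to the quantitative rate. Since $\rho(\bm M_\eta) < 1$ by (C3), for any $\epsilon>0$ with $\rho(\bm M_\eta)+\epsilon<1$ there is a vector norm $\vertiii{\cdot}$ (equivalently, a similarity transform) in which $\vertiii{\bm M_\eta} \le \rho(\bm M_\eta)+\epsilon$; equivalently one can work directly with $\rho(\bm M_\eta)$ by using the Gelfand formula and powers of $\bm M_\eta$, which is in fact what Lemma~\ref{lem:rho2} (referenced for the closed form of $c_0$) appears to do. Define $c_0(\bm x^*,\eta)$ as the radius below which the quadratic remainder is dominated by a geometrically decaying term; concretely, one picks $c_0$ small enough that whenever $\norm{\bm e^{(k)}} < c_0$ we get $\norm{\bm e^{(k+1)}} \le \bigl(\rho(\bm M_\eta) + \text{const}\cdot\norm{\bm e^{(k)}}\bigr)\norm{\bm e^{(k)}}$, so that the errors stay in the ball and form a product of factors each $<1$. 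Telescoping gives $\norm{\bm e^{(k)}} \to 0$. For the sharper bound (\ref{equ:exp_decrease}), I would restrict further to $\norm{\bm e^{(0)}} < \rho(\bm M_\eta) c_0$ and prove by induction that $\norm{\bm e^{(k)}} < \rho^k(\bm M_\eta)\,\norm{\bm e^{(0)}}\big/\bigl(1 - \norm{\bm e^{(0)}}/(\rho(\bm M_\eta)c_0)\bigr)$: the inductive step uses $\norm{\bm e^{(k+1)}} \le \rho(\bm M_\eta)\norm{\bm e^{(k)}} + c\,\norm{\bm e^{(k)}}^2$ and the induction hypothesis to bound $\norm{\bm e^{(k)}}^2$ by a term that, after summing the geometric series $\sum_{j<k}\rho^{j}$, yields exactly the stated prefactor $(1 - \norm{\bm e^{(0)}}/(\rho(\bm M_\eta)c_0))^{-1}$.

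The main obstacle I anticipate is the bookkeeping that makes the effective reduced map exactly $\bm M_\eta = \bm I_N - \eta(\bm I_N - \eta\diag(\bm\gamma))^{-1}\bm H$, rather than something merely similar to it or off by lower-order terms. Two points need care: (i) showing the term $\eta\,\bm Z\bm Z^\topnew(\diag(\bm\gamma)\otimes\bm I_2)\bm x^*$ does not vanish but instead combines with the normal-direction correction of the projection to reproduce the factor $(\bm I_N - \eta\diag(\bm\gamma))^{-1}$ — this is where condition (C2), $\eta\gamma_i \ne 1$, is essential, since it guarantees invertibility of $\bm I_N - \eta\diag(\bm\gamma)$; and (ii) controlling the coupling between the tangential error $\bm r^{(k)}$ and the normal error so that the normal part is genuinely $\O(\norm{\bm r^{(k)}}^2)$ and feeds back only through $\bm q^{(k)}$. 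Conditions (C1) (so that $\bm H$, and hence the whole analysis near a strict local min, is well-behaved) and (C3) then close the argument. Everything else — the explicit constant $c_0$, the triangle-inequality manipulations, the geometric-series summation — is routine once the reduced recursion is pinned down.
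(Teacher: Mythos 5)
Your overall architecture (error recursion, tangential reduction, quadratic remainder, scalar difference inequality) matches the paper's, but the central mechanism that produces the factor $(\bm I_N-\eta\diag(\bm\gamma))^{-1}$ in $\bm M_\eta$ is missing, and the route you sketch for it would fail. After using stationarity, the point being projected is $\bm x^*-\eta(\diag(\bm\gamma)\otimes\bm I_2)\bm x^*+(\bm I_{2N}-\eta\bm A^\topnew\bm A)\bm e^{(k)}$ (note the sign, which you have reversed), so the displacement from $\bm x^*$ contains the $O(1)$ term $-\eta(\diag(\bm\gamma)\otimes\bm I_2)\bm x^*$. Applying Proposition~\ref{prop:dPc} with this displacement does not give a remainder of order $\norm{\bm e^{(k)}}^2$, and moreover the linear term you hope will generate the missing factor actually vanishes: $\bm Z^\topnew(\diag(\bm\gamma)\otimes\bm I_2)\bm x^*=\bm 0_N$ because each $\bm v_i$ is orthogonal to $\bm S_i(\bm x^*)$. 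The paper instead observes that $\bm x^*-\eta\bm A^\topnew(\bm A\bm x^*-\bm b)=\bigl((\bm I_N-\eta\diag(\bm\gamma))\otimes\bm I_2\bigr)\bm x^*$ and invokes the blockwise scale invariance $\P_\C\bigl((\bm D_\eta\otimes\bm I_2)^{-1}\bm y\bigr)=\P_\C(\bm y)$ (valid only when every $1-\eta\gamma_i>0$) to recenter the projection at $\bm x^*$ with the \emph{rescaled} small perturbation $(\bm D_\eta\otimes\bm I_2)(\bm I_{2N}-\eta\bm A^\topnew\bm A)\bm e^{(k)}$; this is exactly where $(\bm I_N-\eta\diag(\bm\gamma))^{-1}$ enters (Lemma~\ref{lem:delta1}). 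Note also that the required positivity $\gamma_i<1/\eta$ is not guaranteed by (C2) alone --- it is derived from (C1)--(C3) in Lemma~\ref{lem:PD_fixed} --- so attributing it to mere invertibility under (C2) leaves a gap.

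A second, smaller gap: the Gelfand/$\epsilon$-norm device only yields a rate $\rho(\bm M_\eta)+\epsilon$ with an $\epsilon$-dependent constant, which is strictly weaker than (\ref{equ:exp_decrease}). To obtain the exact rate $\rho(\bm M_\eta)$ with the stated prefactor, the paper conjugates by $\bm D_\eta^{1/2}\otimes\bm I_2$ so that the reduced iteration matrix becomes the symmetric matrix $\bm I_N-\eta\bm D_\eta^{-1/2}\bm H\bm D_\eta^{-1/2}$, whose spectral norm equals its spectral radius; this symmetrization is also what produces the $(1-\eta\overline{\gamma})/(1-\eta\underline{\gamma})$ factors in $c_0(\bm x^*,\eta)$. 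Your scalar induction for the final bound is essentially the paper's Lemmas~\ref{lem:scalar} and \ref{lem:scalar2}, and your observation that the normal component of the error is $\O(\norm{\bm e^{(k)}}^2)$ because both iterate and limit lie on $\C$ is exactly Lemma~\ref{lem:delta2}; those parts are sound.
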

\noindent Theorem~\ref{theo:rate} suggests that PGD in Algorithm~\ref{algo:PGD} initialized near a strict local minimum as indicated by (C1) with a proper step size $\eta$ following the requirements in (C2) and (C3) converges linearly to the local minimum. The theorem establishes three key results for the linear convergence of Algorithm~\ref{algo:PGD}: the region of convergence, the rate of convergence, and the bound on the error through iterations.
Notably, while the previous result in \cite{tranter2017fast} proves the sublinear convergence to a set of stationary points of (\ref{prob:umls}), our result in Theorem~\ref{theo:rate} shows the linear convergence to a strict local minimum.
It is worthwhile mentioning that the linear convergence of $\{ \norm{\bm x^{(k)} - \bm x^*} \}_{k=0}^\infty$ given by (\ref{equ:exp_decrease}) matches with the definition of R-linear convergence in \cite{jorge2006numerical}-Appendix~A.\footnote{Compared to Q-linear convergence, R-linear convergence concerns the overall rate of decrease in the error, rather than the decrease over each individual step of the algorithm. A more elaborate bound on the convergence of non-linear difference equations of the form (\ref{equ:scalar}) is developed in \cite{vu2021closed}, in terms of the number of iterations to reach certain accuracy. In this work, we use a simpler result in Lemmas~\ref{lem:scalar} and \ref{lem:scalar2} to demonstrate the linear convergence.}


Note that Theorem~\ref{theo:rate} does not explicitly suggest an upper bound on $\eta$ that ensures convergence and it may appear that PGD with arbitrarily large step size $\eta$ still converges. However, to ensure convergence, the implicit condition on $\eta$ in (C3) must hold.
To provide an intuition for the step size requirement in this condition, let us consider a more restrictive condition that suffices (C3):
\begin{lemma} \label{lem:C3}
Let $\eta>0$ be a step size such that
\begin{itemize}[leftmargin=40pt]
    \item[(C3')] $\eta (\lambda_1(\bm H) + 2 \overline{\gamma}) < 2$, where $\overline{\gamma} = \max_i \gamma_i$.
\end{itemize}
Then, Condition (C3) in Theorem~\ref{theo:rate} holds, i.e., $\rho(\bm M_\eta)<1$.
\end{lemma}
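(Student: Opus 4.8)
\textbf{Proof proposal for Lemma~\ref{lem:C3}.}

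The plan is to show that (C3') implies every eigenvalue of $\bm M_\eta$ lies strictly inside the unit circle. The first step is to observe that $\bm M_\eta = \bm I_N - \eta (\bm I_N - \eta \diag(\bm \gamma))^{-1} \bm H$ is similar to a symmetric matrix: writing $\bm D = \bm I_N - \eta \diag(\bm \gamma)$ and noting that under (C3') we have $\eta \gamma_i < \eta(\lambda_1(\bm H) + 2\overline\gamma)/1$ — more directly, since $\bm H \succ \bm 0_N$ forces $\lambda_1(\bm H) > 0$, condition (C3') gives $\eta \overline\gamma < 1$, so $\bm D \succ \bm 0_N$ and $\bm D^{1/2}$ is well-defined. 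Then $\bm D^{1/2} \bm M_\eta \bm D^{-1/2} = \bm I_N - \eta \bm D^{-1/2} \bm H \bm D^{-1/2}$ is symmetric, hence $\bm M_\eta$ has real eigenvalues, and $\rho(\bm M_\eta) < 1$ is equivalent to the two-sided bound $-\bm I_N \prec \bm I_N - \eta \bm D^{-1/2}\bm H \bm D^{-1/2} \prec \bm I_N$, i.e. $\bm 0_N \prec \eta \bm D^{-1/2} \bm H \bm D^{-1/2} \prec 2 \bm I_N$. The left inequality is immediate from (C1).

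The second step is to establish the right inequality $\eta \bm D^{-1/2} \bm H \bm D^{-1/2} \prec 2\bm I_N$, equivalently $\eta \bm H \prec 2 \bm D$, i.e. $\eta \bm H \prec 2(\bm I_N - \eta \diag(\bm \gamma))$, i.e. $\eta(\bm H + 2\diag(\bm \gamma)) \prec 2 \bm I_N$. Recalling from (\ref{equ:H}) that $\bm H = \bm Z^\topnew \bm A^\topnew \bm A \bm Z - \diag(\bm \gamma)$, we get $\bm H + 2\diag(\bm \gamma) = \bm Z^\topnew \bm A^\topnew \bm A \bm Z + \diag(\bm \gamma)$. Now I bound the largest eigenvalue: $\lambda_1(\bm Z^\topnew \bm A^\topnew \bm A \bm Z + \diag(\bm \gamma)) \le \lambda_1(\bm Z^\topnew \bm A^\topnew \bm A \bm Z + \diag(\bm \gamma) - \diag(\bm\gamma) + \overline\gamma \bm I_N) = \lambda_1(\bm H) + \overline\gamma$ — wait, more carefully, using $\diag(\bm\gamma) \preceq \overline\gamma \bm I_N$ and Weyl's inequality, $\lambda_1(\bm H + 2\diag(\bm\gamma)) = \lambda_1(\bm H + \diag(\bm\gamma) + \diag(\bm\gamma)) \le \lambda_1(\bm H + \diag(\bm\gamma)) + \overline\gamma = \lambda_1(\bm Z^\topnew\bm A^\topnew\bm A\bm Z) + \overline\gamma \le \lambda_1(\bm H) + 2\overline\gamma$, where the last step uses $\bm Z^\topnew\bm A^\topnew\bm A\bm Z = \bm H + \diag(\bm\gamma) \preceq \bm H + \overline\gamma\bm I_N$ again by Weyl. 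Therefore $\eta \lambda_1(\bm H + 2\diag(\bm\gamma)) \le \eta(\lambda_1(\bm H) + 2\overline\gamma) < 2$ by (C3'), which gives $\eta(\bm H + 2\diag(\bm\gamma)) \prec 2\bm I_N$ as required.

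Combining the two steps yields $\bm 0_N \prec \eta \bm D^{-1/2}\bm H\bm D^{-1/2} \prec 2\bm I_N$, hence all eigenvalues of $\bm M_\eta$ lie in $(-1,1)$, so $\rho(\bm M_\eta) < 1$, which is (C3). The only mild subtlety — and the one place to be careful — is justifying $\bm D \succ \bm 0_N$ (so that $\bm D^{1/2}$ exists and the similarity argument is legitimate): this follows because $\bm H \succ \bm 0_N$ gives $\lambda_1(\bm H) > 0$, whence (C3') forces $2\eta\overline\gamma < 2$, i.e. $\eta\gamma_i \le \eta\overline\gamma < 1$ for all $i$, so each diagonal entry $1 - \eta\gamma_i$ of $\bm D$ is positive. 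This also shows (C3') implies (C2). No step here is a genuine obstacle; the argument is a routine chain of Weyl/congruence inequalities, and the writeup should simply be careful to order the reductions (from $\rho(\bm M_\eta)<1$ down to the scalar inequality (C3')) transparently.
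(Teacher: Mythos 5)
Your proof is correct, and it reaches the conclusion by a somewhat different route than the paper. The paper's argument works directly with the non-symmetric product $\bm D_\eta \bm H$: after checking that $1-\eta\gamma_i>0$ for all $i$ (exactly as you do, via $\lambda_1(\bm H)>0$ from (C1)), it invokes the eigenvalue inequality for products of two PSD matrices from the cited reference \cite{wang1992some}, namely $\lambda_i(\bm D_\eta)\lambda_N(\bm H)\leq\lambda_i(\bm D_\eta\bm H)\leq\lambda_i(\bm D_\eta)\lambda_1(\bm H)$, and then translates this into $-1<\lambda_i(\bm M_\eta)<1$ using the rearranged form of (C3'). You instead symmetrize $\bm M_\eta$ by the similarity $\bm D^{1/2}\bm M_\eta\bm D^{-1/2}=\bm I_N-\eta\bm D^{-1/2}\bm H\bm D^{-1/2}$, reduce $\rho(\bm M_\eta)<1$ to the Loewner sandwich $\bm 0_N\prec\eta\bm D^{-1/2}\bm H\bm D^{-1/2}\prec 2\bm I_N$, undo the congruence to get the scalar-free condition $\eta(\bm H+2\diag(\bm\gamma))\prec 2\bm I_N$, and finish with Weyl. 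Both routes need (C1) in the same two places (positivity of $\bm D$ and the lower eigenvalue bound), and your observation that (C3') subsumes (C2) is correct. What your version buys is self-containedness (no appeal to the product-of-PSD-matrices inequality) and consistency with the similarity trick the paper already uses in the proof of Lemma~\ref{lem:PD_fixed}; what the paper's version buys is a per-eigenvalue bound $\lambda_i(\bm M_\eta)\geq 1-\eta(1-\eta\gamma_i)^{-1}\lambda_1(\bm H)$ rather than just the spectral radius. One small simplification available to you: the Weyl chain through $\bm Z^{\top}\bm A^{\top}\bm A\bm Z$ is unnecessary, since $2\diag(\bm\gamma)\preceq 2\overline{\gamma}\bm I_N$ gives $\lambda_1(\bm H+2\diag(\bm\gamma))\leq\lambda_1(\bm H)+2\overline{\gamma}$ in one step.
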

\noindent The proof of Lemma~\ref{lem:C3} is given in Appendix~\ref{appdx:C3}.
When $\lambda_1(\bm H) + 2 \overline{\gamma} \leq 0$, any step size $\eta>0$ satisfies (C3') and hence, satisfies (C3).
When $\lambda_1(\bm H) + 2 \overline{\gamma}>0$, (C3') suggests an upper bound on $\eta$ that is sufficient but not necessary for (C3), i.e., $\eta<2/(\lambda_1(\bm H) + 2 \overline{\gamma})$.  
As can be seen from Table~\ref{tbl:compare}, Condition (C3') is similar to the convergence condition in the case of unit-norm constraint. 

In Theorem~\ref{theo:rate}, Condition (C3) suggest a non-linear relationship between the convergence rate $\rho(\bm M_\eta)$ and the step size $\eta$. In principle, one can find the optimal step size for local linear convergence by solving the 1-D optimization
\begin{align*}
    \eta^* &= \argmin_{\eta>0} \rho \bigl(\bm M_\eta (\bm x^*)\bigr) \\
    &= \argmin_{\eta>0} \rho \Bigl(\bm I_N - \eta \bigl(\bm I_N - \eta \diag(\bm \gamma(\bm x^*)) \bigr)^{-1} \bm H(\bm x^*) \Bigr) . \numberthis \label{equ:eta_opt}
\end{align*}
In the last equation, we spell out the dependence on $\bm x^*$ to emphasize that the prior knowledge of the local minimum is critical for determining the optimal step size.
In Section~\ref{sec:adaptive}, we propose two variants of PGD with adaptive step size schemes that do not require prior knowledge of $\bm M_\eta$ to select the optimal step size. The proposed algorithms enjoy the fast convergence of PGD with a fixed optimal step size while remaining the same computational complexity per iteration.

\subsection{Proof of Theorem~\ref{theo:rate}}

This subsection presents a proof of Theorem~\ref{theo:rate}, arranging the key ideas into lemmas and deferring their proofs to the appendix. Let us begin with the claim that the strict local minimum $\bm x^*$ in Theorem~\ref{theo:rate} is also a fixed point of PGD with the appropriate choice of the step size $\eta$.
\begin{lemma} \label{lem:PD_fixed}
Consider the same setting as Theorem~\ref{theo:rate}. Assume that Conditions (C1)-(C3) in Theorem~\ref{theo:rate} hold.
Then, $\bm x^*$ is a fixed point of Algorithm~\ref{algo:PGD} with the given step size $\eta$ and its corresponding Lagrange multiplier $\bm \gamma$ satisfies $\gamma_i<1/\eta$, for all $i=1,\ldots,N$.
\end{lemma}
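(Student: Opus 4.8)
The plan is to apply Lemma~\ref{lem:fixed}, which gives an if-and-only-if characterization of fixed points of Algorithm~\ref{algo:PGD}: a stationary point $\bm x^*$ is a fixed point precisely when its Lagrange multiplier satisfies the component-wise bounds in (\ref{equ:fixed}). Since $\bm x^*$ is already assumed to be a stationary point (it is a strict local minimum by (C1), hence satisfies the first-order conditions of Lemma~\ref{lem:KKT}), the entire task reduces to verifying that $\gamma_i < 1/\eta$ for every $i=1,\ldots,N$. Note that the strict inequality $\gamma_i<1/\eta$ is the stronger of the two cases in (\ref{equ:fixed}), so establishing it uniformly also covers the case $\bm S_i(\bm x^*)=\bm s$; in fact, because $\bm x^*$ is a strict local minimum the second-order condition forces $\bm S_i(\bm x^*)\neq\bm 0_2$ for all $i$ anyway, so only the first case of (\ref{equ:fixed}) is relevant. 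Equivalently, I must show $\bm I_N - \eta\diag(\bm\gamma)\succ \bm 0_N$, which is exactly the fixed-point condition on the step size listed in Table~\ref{tbl:compare}.

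First I would record what the three hypotheses give us. Condition (C1) says $\bm H\succ\bm 0_N$, i.e. $\bm Z^\topnew\bm A^\topnew\bm A\bm Z - \diag(\bm\gamma)\succ\bm 0_N$, so $\diag(\bm\gamma)\prec \bm Z^\topnew\bm A^\topnew\bm A\bm Z$; in particular $\gamma_i < \bm e_i^\topnew\bm Z^\topnew\bm A^\topnew\bm A\bm Z\bm e_i \le \lambda_1(\bm Z^\topnew\bm A^\topnew\bm A\bm Z)$ for each $i$. Condition (C2) says $\eta\gamma_i\neq 1$ for all $i$, which rules out the boundary case and means each $\gamma_i$ is either strictly below or strictly above $1/\eta$. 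Condition (C3), $\rho(\bm M_\eta)<1$ with $\bm M_\eta = \bm I_N - \eta(\bm I_N-\eta\diag(\bm\gamma))^{-1}\bm H$, is the substantive ingredient that must be leveraged to force the sign.

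The main step — and the place I expect the real work to be — is the following argument by contradiction. Suppose some $\gamma_j > 1/\eta$ (by (C2) it cannot equal $1/\eta$). Then the $j$-th diagonal entry of $\bm I_N-\eta\diag(\bm\gamma)$ is negative. I would like to conclude that $\bm M_\eta$ then has an eigenvalue of modulus at least $1$, contradicting (C3). The cleanest route is to exploit that $\bm M_\eta$ is similar to a symmetric matrix: writing $\bm D = \bm I_N - \eta\diag(\bm\gamma)$ (diagonal, invertible by (C2)) and noting $\bm D^{-1}\bm H$ is similar to $\bm D^{-1/2}\bm H\bm D^{-1/2}$ when $\bm D\succ\bm 0$ — but $\bm D$ need not be PD, so instead use $|\bm D|^{1/2}$ with the signature matrix $\bm J=\operatorname{sign}(\bm D)$, giving $\bm D^{-1}\bm H \sim |\bm D|^{-1/2}\bm J^{-1}\bm H|\bm D|^{-1/2} = |\bm D|^{-1/2}\bm J\,\bm H|\bm D|^{-1/2}$, so $\bm M_\eta$ is similar to $\bm I_N - \eta\,|\bm D|^{-1/2}\bm J\bm H|\bm D|^{-1/2}$. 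If every $\gamma_i<1/\eta$ then $\bm J=\bm I_N$ and this is symmetric with eigenvalues $1-\eta\,\lambda(|\bm D|^{-1/2}\bm H|\bm D|^{-1/2})$, all $<1$ by (C1); requiring all of them $>-1$ is then exactly (C3'). Conversely, if some $\gamma_j>1/\eta$ so that $\bm J$ has a $-1$ on the diagonal, one shows the congruence-type matrix $\bm J\bm H$ (equivalently the pencil $\bm H - \mu\bm D$) must have a negative generalized eigenvalue, i.e. there is $\mu<0$ with $\det(\bm H-\mu\bm D)=0$; for that $\mu$, the corresponding eigenvalue of $\bm M_\eta$ is $1-\eta\mu>1$, contradicting $\rho(\bm M_\eta)<1$. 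Establishing the existence of such a negative $\mu$ is the crux: it follows from a Courant–Fischer / inertia (Sylvester's law of inertia) argument comparing the inertia of $\bm H\succ\bm 0$ against that of $\bm D$, since a PD matrix $\bm H$ paired with an indefinite $\bm D$ in the pencil $\bm H-\mu\bm D$ must have as many negative generalized eigenvalues as $\bm D$ has negative eigenvalues. Once the contradiction is reached, we conclude $\gamma_i<1/\eta$ for all $i$, and Lemma~\ref{lem:fixed} then immediately yields that $\bm x^*$ is a fixed point of Algorithm~\ref{algo:PGD}.
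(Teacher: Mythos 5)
Your proposal is correct and rests on the same two pillars as the paper's own proof: the equivalence in Lemma~\ref{lem:fixed} reduces the whole task to showing $\gamma_i<1/\eta$ for all $i$, and that bound is extracted from (C3) by conjugating $\bm M_\eta$ into a symmetric matrix and invoking a Sylvester-inertia/congruence argument. The only difference is presentational: the paper argues directly rather than by contradiction, writing $\bm H^{1/2}\bm M_\eta\bm H^{-1/2}=\bm I_N-\eta\bm H^{1/2}\bm D_\eta\bm H^{1/2}$ (symmetric by (C1)), so that $\rho(\bm M_\eta)<1$ forces $\bm H^{1/2}\bm D_\eta\bm H^{1/2}\succ\bm 0_N$ and hence $\bm D_\eta\succ\bm 0_N$ by congruence, which avoids your signature matrix $\bm J$ and the pencil analysis while using the identical underlying inertia fact.
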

\noindent The proof of Lemma~\ref{lem:PD_fixed} is given in Appendix~\ref{appdx:PD_fixed}.
Next, we establish a recursion on the error vector, based on the first-order approximation of the projection in Proposition~\ref{prop:dPc}.
\begin{lemma} \label{lem:delta1}
Consider the same setting as Theorem~\ref{theo:rate}. Assume that Conditions (C1)-(C3) in Theorem~\ref{theo:rate} hold.
Let $\bm D_\eta=(\bm I_N - \eta \diag(\bm \gamma))^{-1}$ and $\bm \delta^{(k)} = \bm x^{(k)} - \bm x^*$ be the error vector at the $k$th iteration of Algorithm~\ref{algo:PGD}.
Then, for any integer $k\geq 0$, we have
\begin{align*}
    \bm \delta^{(k+1)} &= \bm Z\bm Z^\topnew (\bm D_\eta \otimes \bm I_2) (\bm I_{2N} - \eta \bm A^\topnew \bm A) \bm \delta^{(k)} \\
    &+ \bm q \bigl( (\bm D_\eta \otimes \bm I_2) (\bm I_{2N} - \eta \bm A^\topnew \bm A) \bm \delta^{(k)} \bigr) , \numberthis \label{equ:delta1}
\end{align*}
where $\bm Z$ at $\bm x^*$ and $\bm q$ are defined in Proposition~\ref{prop:dPc}.
\end{lemma}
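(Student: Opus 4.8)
The plan is to substitute the PGD update into the definition of $\bm\delta^{(k+1)}$, linearize the projection around $\bm x^*$ via Proposition~\ref{prop:dPc}, and use the block-wise scale invariance of $\P_\C$ to absorb the diagonal factor $\bm D_\eta$ into the perturbation. First I would record two consequences of Lemma~\ref{lem:PD_fixed}: under (C1)--(C3), $\bm x^*$ is a fixed point of Algorithm~\ref{algo:PGD} and $\gamma_i<1/\eta$ for every $i$, so $1-\eta\gamma_i>0$; hence $\bm D_\eta=(\bm I_N-\eta\diag(\bm\gamma))^{-1}$ is well defined with strictly positive diagonal entries. Using the first-order condition (\ref{equ:KKT1}) and $\bm I_{2N}-\eta\,\diag(\bm\gamma)\otimes\bm I_2=(\bm I_N-\eta\diag(\bm\gamma))\otimes\bm I_2=\bm D_\eta^{-1}\otimes\bm I_2$, the gradient step evaluated at $\bm x^*$ reads
\[
    \bm x^* - \eta\bm A^\top(\bm A\bm x^* - \bm b) = \bigl((\bm I_N-\eta\diag(\bm\gamma))\otimes\bm I_2\bigr)\bm x^* = (\bm D_\eta^{-1}\otimes\bm I_2)\,\bm x^* .
\]
Since the gradient map $\bm x\mapsto\bm x-\eta\bm A^\top(\bm A\bm x-\bm b)$ is affine with linear part $\bm I_{2N}-\eta\bm A^\top\bm A$, plugging in $\bm x^{(k)}=\bm x^*+\bm\delta^{(k)}$ gives
\[
    \bm x^{(k)} - \eta\bm A^\top(\bm A\bm x^{(k)} - \bm b) = (\bm D_\eta^{-1}\otimes\bm I_2)\bm x^* + (\bm I_{2N}-\eta\bm A^\top\bm A)\bm\delta^{(k)} .
\]

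Next I would exploit the fact that $\P_\C$ normalizes each coordinate pair independently, hence is invariant under any block-diagonal scaling by positive scalars: for $t_1,\dots,t_N>0$ one has $\P_\C\bigl((\diag(t_1,\dots,t_N)\otimes\bm I_2)\bm y\bigr)=\P_\C(\bm y)$. Applying this with the positive-diagonal matrix $\bm D_\eta\otimes\bm I_2$ and cancelling $(\bm D_\eta\otimes\bm I_2)(\bm D_\eta^{-1}\otimes\bm I_2)=\bm I_{2N}$ rewrites the PGD update as
\[
    \bm x^{(k+1)} = \P_\C\Bigl(\bm x^* + (\bm D_\eta\otimes\bm I_2)(\bm I_{2N}-\eta\bm A^\top\bm A)\bm\delta^{(k)}\Bigr) .
\]
I would then invoke Proposition~\ref{prop:dPc} with base point $\bm x=\bm x^*\in\C$ — whose tangent basis is exactly the $\bm Z$ in (\ref{equ:Z}), with $\bm v_i=[-x^*_{2i},x^*_{2i-1}]^\top$ — and perturbation $\bm\delta=(\bm D_\eta\otimes\bm I_2)(\bm I_{2N}-\eta\bm A^\top\bm A)\bm\delta^{(k)}$, obtaining $\bm x^{(k+1)}=\bm x^*+\bm Z\bm Z^\top\bm\delta+\bm q(\bm\delta)$. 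Subtracting $\bm x^*$ yields precisely (\ref{equ:delta1}), and the remainder inherits the bound $\norm{\bm q(\bm\delta)}\le 2\norm{\bm\delta}^2$ directly from Proposition~\ref{prop:dPc} with the argument $\bm\delta$ displayed above, so no additional estimate is needed.

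I expect the only subtle point to be the scale-invariance step, and specifically why positivity of $1-\eta\gamma_i$ (as guaranteed by Lemma~\ref{lem:PD_fixed}), rather than merely $\eta\gamma_i\neq1$ from (C2), is what makes the cancellation legitimate: a negative block scaling would flip the sign of $\bm S_i(\cdot)$ and hence change the projected value, whereas a strictly positive scaling leaves $\P_\C$ untouched. Everything else is routine Kronecker-product bookkeeping, so the argument is essentially complete once the affine substitution and the invariance observation are in place.
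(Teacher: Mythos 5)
Your proposal is correct and follows essentially the same route as the paper's proof: substitute $\bm x^{(k)}=\bm x^*+\bm\delta^{(k)}$ into the affine gradient step, use the stationarity condition to write $\bm x^*-\eta\bm A^\topnew(\bm A\bm x^*-\bm b)=(\bm D_\eta^{-1}\otimes\bm I_2)\bm x^*$, invoke the positive block-scale invariance of $\P_\C$ (justified by $\bm D_\eta\succ\bm 0_N$ from Lemma~\ref{lem:PD_fixed}), and finish with Proposition~\ref{prop:dPc} at $\bm x^*$. Your remark that strict positivity of $1-\eta\gamma_i$, rather than mere nonvanishing, is what legitimizes the cancellation is exactly the point the paper relies on.
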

\noindent The proof of Lemma~\ref{lem:delta1} is given in Appendix~\ref{appdx:delta1}.
Equation (\ref{equ:delta1}) can be viewed as an approximately linear dynamic on the error $\bm \delta^{(k)}$. As the error becomes sufficiently small, the residual term $\bm q ( (\bm D_\eta \otimes \bm I_2) (\bm I_{2N} - \eta \bm A^\topnew \bm A) \bm \delta^{(k)} )$ is negligible while the linear term $\bm Z\bm Z^\topnew (\bm D_\eta \otimes \bm I_2) (\bm I_{2N} - \eta \bm A^\topnew \bm A) \bm \delta^{(k)}$ dominates.
It has been well-studied in the literature \cite{bellman2008stability,polyak1964some,vu2019convergence,vu2021exact,vu2021closed} that the linear convergence rate of (\ref{equ:delta1}) is the spectral radius of the linear operator $\bm Z\bm Z^\topnew (\bm D_\eta \otimes \bm I_2) (\bm I_{2N} - \eta \bm A^\topnew \bm A)$. However, following the argument about the structural constraint on the error vector in \cite{vu2021exact}, we emphasize the fact $\bm \delta^{(k)} = \P_\C (\bm x^* + \bm \delta^{(k)}) - \P_\C(\bm x^*)$ is the difference between two points on the unit-modulus manifold and show that the error vector is dominated by the component on the tangent space to $\C$ at $\bm x^*$.
\begin{lemma} \label{lem:delta2}
Consider the same setting as Theorem~\ref{theo:rate}. At the $k$th iteration of Algorithm~\ref{algo:PGD}, we have
\begin{align} \label{equ:delta_k}
    \bm \delta^{(k)} = \bm Z\bm Z^\topnew \bm \delta^{(k)} + \bm q (\bm \delta^{(k)}) ,
\end{align}
where $\bm Z$ at $\bm x^*$ and $\bm q$ are defined in Proposition~\ref{prop:dPc}.
\end{lemma}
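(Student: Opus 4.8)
\textbf{Proof proposal for Lemma~\ref{lem:delta2}.}
The plan is to recognize that the error vector $\bm\delta^{(k)} = \bm x^{(k)} - \bm x^*$ is literally the difference of the images of two points under the projection $\P_\C$, and then invoke the universal first-order expansion in Proposition~\ref{prop:dPc}. Concretely, I would first argue that every iterate lies on the unit-modulus set: for $k\geq 1$, $\bm x^{(k)} = \P_\C(\cdot)$ is in $\C$ by the definition of the orthogonal projection $\P_\C:\R^{2N}\to\C$, and for $k=0$ this is exactly the hypothesis $\bm x^{(0)}\in\C$ in Theorem~\ref{theo:rate}. Next I would note that $\P_\C$ fixes every point of $\C$: if $\bm y\in\C$, then $\bm S_i(\bm y)\neq \bm 0_2$ for each $i$, so by (\ref{equ:Pc}) we get $\bm S_i(\P_\C(\bm y)) = \bm S_i(\bm y)/\norm{\bm S_i(\bm y)} = \bm S_i(\bm y)$, hence $\P_\C(\bm y)=\bm y$. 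Applying this with $\bm y=\bm x^{(k)}$ and with $\bm y=\bm x^*$ (feasible and a fixed point by Lemma~\ref{lem:PD_fixed}) yields the identity $\bm\delta^{(k)} = \P_\C(\bm x^{(k)}) - \P_\C(\bm x^*) = \P_\C(\bm x^* + \bm\delta^{(k)}) - \bm x^*$.

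Then I would apply Proposition~\ref{prop:dPc} at the base point $\bm x = \bm x^* \in \C$ with the perturbation $\bm\delta = \bm\delta^{(k)}$. Since the expansion (\ref{equ:taylor}) holds for arbitrary $\bm\delta\in\R^{2N}$ with no smallness restriction, we obtain
\begin{align*}
    \P_\C(\bm x^* + \bm\delta^{(k)}) = \bm x^* + \bm Z\bm Z^\topnew \bm\delta^{(k)} + \bm q(\bm\delta^{(k)}),
\end{align*}
where $\bm Z = \sum_{i=1}^N \bm e_i\bm e_i^\topnew \otimes \bm v_i$ with $\bm v_i = [-x^*_{2i}, x^*_{2i-1}]^\topnew$ (the $\bm Z$ associated with $\bm x^*$, as in the statement), and $\norm{\bm q(\bm\delta^{(k)})} \leq 2\norm{\bm\delta^{(k)}}^2$. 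Subtracting $\bm x^*$ from both sides and combining with the identity from the previous paragraph gives $\bm\delta^{(k)} = \bm Z\bm Z^\topnew \bm\delta^{(k)} + \bm q(\bm\delta^{(k)})$, which is exactly (\ref{equ:delta_k}).

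The argument is essentially immediate once the right viewpoint is adopted; there is no delicate estimate to carry out here, since all the analytic work (the differentiability of $\P_\C$ on $\C$, the form of its derivative $\bm Z\bm Z^\topnew$, and the quadratic remainder bound) is already packaged in Proposition~\ref{prop:dPc}. The only point that needs care is the bookkeeping that makes the lemma applicable for all $k\geq 0$: one must use feasibility of the iterates ($\bm x^{(k)}\in\C$, valid even at $k=0$ by the theorem's hypothesis) together with the idempotence $\P_\C|_\C = \mathrm{id}$ to write $\bm\delta^{(k)}$ as a difference of projections of $\bm x^*$ and $\bm x^*+\bm\delta^{(k)}$. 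This is the step I would expect a careful reader to want spelled out, and it is the one I would state most explicitly.
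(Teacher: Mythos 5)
Your proposal is correct and follows essentially the same route as the paper's proof: both use feasibility of the iterates to write $\bm \delta^{(k)} = \P_\C(\bm x^* + \bm \delta^{(k)}) - \bm x^*$ and then apply the universal expansion of Proposition~\ref{prop:dPc} at $\bm x^*$. The only difference is that you spell out the idempotence of $\P_\C$ on $\C$ and the $k=0$ case explicitly, which the paper leaves implicit.
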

\noindent The proof of Lemma~\ref{lem:delta2} is given in Appendix~\ref{appdx:delta2}.
Next, combining Lemmas~\ref{lem:delta1} and \ref{lem:delta2}, we obtain a recursion on the error vector that implicitly enforces it to lie on the tangent space to $\C$ at $\bm x^*$ as follows.
\begin{lemma} \label{lem:delta3}
Consider the same setting as Theorem~\ref{theo:rate}. Assume that Conditions (C1)-(C3) in Theorem~\ref{theo:rate} hold.
Then by Lemmas~\ref{lem:delta1} and \ref{lem:delta2}, the error vector at the $k$th iteration of Algorithm~\ref{algo:PGD} satisfies
\begin{align} \label{equ:H_delta}
    {\bm \delta}^{(k+1)} = \bm Z \bm M_\eta \bm Z^\topnew {\bm \delta}^{(k)} + \hat{\bm q}({\bm \delta}^{(k)}) ,
\end{align}
where $\bm Z$ at $\bm x^*$ is defined in Proposition~\ref{prop:dPc}, $\hat{\bm q}: \R^{2N} \to \R^{2N}$ satisfies $\norm{\hat{\bm q} ({\bm \delta})} \leq 2 c_\eta (c_\eta + 1) \norm{{\bm \delta}}^2$, and $c_\eta=\norm{((\bm I_N - \eta \diag(\bm \gamma))^{-1} \otimes \bm I_2) (\bm I_{2N} - \eta \bm A^\topnew \bm A)}_2$.
\end{lemma}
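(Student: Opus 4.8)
The plan is to feed the tangent-space identity of Lemma~\ref{lem:delta2} into the recursion of Lemma~\ref{lem:delta1} and then isolate the genuinely linear part. First I would abbreviate $\bm B_\eta \triangleq (\bm D_\eta \otimes \bm I_2)(\bm I_{2N} - \eta \bm A^\topnew \bm A)$, so that $c_\eta = \norm{\bm B_\eta}_2$ and Lemma~\ref{lem:delta1} reads $\bm \delta^{(k+1)} = \bm Z \bm Z^\topnew \bm B_\eta \bm \delta^{(k)} + \bm q(\bm B_\eta \bm \delta^{(k)})$. Substituting $\bm \delta^{(k)} = \bm Z \bm Z^\topnew \bm \delta^{(k)} + \bm q(\bm \delta^{(k)})$ from Lemma~\ref{lem:delta2} into the first (linear) term splits it as $\bm Z \bm Z^\topnew \bm B_\eta \bm Z \bm Z^\topnew \bm \delta^{(k)} + \bm Z \bm Z^\topnew \bm B_\eta \bm q(\bm \delta^{(k)})$, where the second piece is already quadratic in $\bm \delta^{(k)}$.

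The algebraic core is to show $\bm Z^\topnew \bm B_\eta \bm Z = \bm M_\eta$. Using the block form $\bm Z = \sum_{i=1}^N \bm e_i \bm e_i^\topnew \otimes \bm v_i$ together with the fact that $\bm D_\eta$ is diagonal, one verifies the commutation $\bm Z^\topnew (\bm D_\eta \otimes \bm I_2) = \bm D_\eta \bm Z^\topnew$ (the diagonal scaling passes through the left Kronecker factor, since $\bm e_i^\topnew \bm D_\eta = (\bm D_\eta)_{ii}\bm e_i^\topnew$). Then, invoking $\bm Z^\topnew \bm Z = \bm I_N$ (semi-orthogonality) and the definition $\bm H = \bm Z^\topnew \bm A^\topnew \bm A \bm Z - \diag(\bm \gamma)$ in (\ref{equ:H}),
\begin{align*}
    \bm Z^\topnew \bm B_\eta \bm Z &= \bm D_\eta \bigl(\bm Z^\topnew \bm Z - \eta \bm Z^\topnew \bm A^\topnew \bm A \bm Z\bigr) = \bm D_\eta \bigl(\bm I_N - \eta \diag(\bm \gamma) - \eta \bm H\bigr) \\
    &= \bm D_\eta \bigl(\bm D_\eta^{-1} - \eta \bm H\bigr) = \bm I_N - \eta \bm D_\eta \bm H = \bm M_\eta .
\end{align*}
Hence $\bm Z \bm Z^\topnew \bm B_\eta \bm Z \bm Z^\topnew \bm \delta^{(k)} = \bm Z \bm M_\eta \bm Z^\topnew \bm \delta^{(k)}$, which is the announced linear term.

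It then remains to set $\hat{\bm q}(\bm \delta^{(k)}) = \bm Z \bm Z^\topnew \bm B_\eta \bm q(\bm \delta^{(k)}) + \bm q(\bm B_\eta \bm \delta^{(k)})$ and bound it. Since $\bm Z \bm Z^\topnew$ is an orthogonal projector, $\norm{\bm Z \bm Z^\topnew}_2 = 1$; combining this with $\norm{\bm B_\eta}_2 = c_\eta$ and the quadratic remainder bound $\norm{\bm q(\bm u)} \leq 2\norm{\bm u}^2$ of Proposition~\ref{prop:dPc} gives $\norm{\bm Z \bm Z^\topnew \bm B_\eta \bm q(\bm \delta)} \leq 2 c_\eta \norm{\bm \delta}^2$ and $\norm{\bm q(\bm B_\eta \bm \delta)} \leq 2 c_\eta^2 \norm{\bm \delta}^2$, whence $\norm{\hat{\bm q}(\bm \delta)} \leq 2 c_\eta(c_\eta + 1)\norm{\bm \delta}^2$. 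The hard part will be the Kronecker-product bookkeeping in the commutation identity $\bm Z^\topnew(\bm D_\eta \otimes \bm I_2) = \bm D_\eta \bm Z^\topnew$ and its reduction to $\bm M_\eta$; the remaining norm estimates are routine. Conditions (C1)--(C3) enter only to guarantee that $\bm D_\eta$ (hence $\bm M_\eta$) is well defined (via (C2)) and to keep us within the setting of Theorem~\ref{theo:rate}; they play no role in the algebraic identity itself.
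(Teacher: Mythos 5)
Your proposal is correct and follows essentially the same route as the paper's proof: substitute the tangent-space identity of Lemma~\ref{lem:delta2} into the linear term of Lemma~\ref{lem:delta1}, use the commutation $(\bm D_\eta \otimes \bm I_2)\bm Z = \bm Z \bm D_\eta$ (the paper isolates this as an auxiliary lemma) together with $\bm Z^\topnew \bm Z = \bm I_N$ and the definition of $\bm H$ to reduce the linear part to $\bm Z \bm M_\eta \bm Z^\topnew$, and collect the two quadratic remainders into $\hat{\bm q}$ with the bound $2c_\eta(c_\eta+1)\norm{\bm\delta}^2$ via $\norm{\bm Z\bm Z^\topnew}_2 \le 1$. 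The only difference is cosmetic: you package the algebra as the single identity $\bm Z^\topnew \bm B_\eta \bm Z = \bm M_\eta$, which is a slightly cleaner way to organize the same computation.
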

\noindent The proof of Lemma~\ref{lem:delta3} is given in Appendix~\ref{appdx:delta3}.
Finally, we show the convergence of $\{\bm \delta^{(k)}\}_{k=0}^\infty$ by recognizing that \textit{(i)} the spectral radius of $\bm Z \bm M_\eta \bm Z^\topnew$ is the same as that of $\bm M_\eta$ and \textit{(ii)} the recursion (\ref{equ:H_delta}) is an approximately linear difference equation that is convergent for $\bm \delta^{(0)}$ sufficiently close to $\bm 0_{2N}$.
\begin{lemma} \label{lem:rho2}
Consider the same setting as Theorem~\ref{theo:rate}. Assume that Conditions (C1)-(C3) in Theorem~\ref{theo:rate} hold.
Let us define $\overline{\gamma} = \max_i \gamma_i$, $\underline{\gamma} = \min_i \gamma_i$ and
\begin{align} \label{equ:c0}
    c_0(\bm x^*,\eta) &= \frac{1-\rho(\bm M_\eta)}{2c_\eta(c_\eta+1)} \frac{1-\eta \overline{\gamma}}{1-\eta \underline{\gamma}} ,
\end{align}
where $c_\eta$ is defined in Lemma~\ref{lem:delta3}.
If $\norm{\bm \delta^{(0)}} < c_0(\bm x^*,\eta)$, then the sequence $\{ \bm \delta^{(k)} \}_{k=0}^\infty$ converges to $\bm 0_{2N}$.
Furthermore, let $c_1(\bm x^*,\eta) = \rho(\bm M_\eta) c_0(\bm x^*,\eta)$.
Then, for any $\norm{\bm \delta^{(0)}} < c_1(\bm x^*,\eta)$ and integer $k \geq 0$, we have
\begin{align} \label{equ:rho1_norm_delta}
    \norm{\bm \delta^{(k)}} \leq \biggl(1-\frac{\norm{\bm \delta^{(0)}}}{c_1(\bm x^*,\eta)}\biggr)^{-1} \biggl( \frac{1- \eta \underline{\gamma}}{1- \eta \overline{\gamma}} \biggr)^{1/2} \norm{\bm \delta^{(0)}} \rho^k(\bm M_\eta) .
\end{align}
\end{lemma}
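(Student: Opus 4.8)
The plan is to turn the vector recursion (\ref{equ:H_delta}) into a scalar nonlinear difference inequality of the form $a_{k+1}\le\rho(\bm M_\eta)\,a_k+C\,a_k^2$ and then read off both conclusions from the scalar comparison Lemmas~\ref{lem:scalar} and \ref{lem:scalar2}. The linear coefficient must come out \emph{exactly} $\rho(\bm M_\eta)$, and two spectral facts are what make this work. First, because $\bm Z^\topnew\bm Z=\bm I_N$, any eigenpair $(\lambda,\bm v)$ of $\bm M_\eta$ gives an eigenpair $(\lambda,\bm Z\bm v)$ of $\bm Z\bm M_\eta\bm Z^\topnew$, while $\bm Z\bm M_\eta\bm Z^\topnew$ has rank at most $N$; hence its spectrum is that of $\bm M_\eta$ augmented by $N$ zeros, so $\rho(\bm Z\bm M_\eta\bm Z^\topnew)=\rho(\bm M_\eta)<1$ by (C3). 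Second, put $\bm E=\bm I_N-\eta\diag(\bm\gamma)$, which is positive definite by Lemma~\ref{lem:PD_fixed}; then $\bm E^{1/2}\bm M_\eta\bm E^{-1/2}=\bm I_N-\eta\,\bm E^{-1/2}\bm H\bm E^{-1/2}$ is \emph{symmetric} (this is where (C1) enters, via $\bm H=\bm H^\topnew$) and similar to $\bm M_\eta$, so $\norm{\bm E^{1/2}\bm M_\eta\bm E^{-1/2}}_2=\rho(\bm M_\eta)$. The symmetrization is what buys the exact rate $\rho(\bm M_\eta)$ rather than the possibly much larger $\norm{\bm M_\eta}_2$.

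Concretely, I would premultiply (\ref{equ:H_delta}) by $\bm E^{1/2}\bm Z^\topnew$ and set $\bm w^{(k)}=\bm E^{1/2}\bm Z^\topnew\bm\delta^{(k)}$. Using $\bm E^{1/2}\bm Z^\topnew\bm Z\bm M_\eta\bm Z^\topnew=(\bm E^{1/2}\bm M_\eta\bm E^{-1/2})\,\bm E^{1/2}\bm Z^\topnew$, this gives $\bm w^{(k+1)}=(\bm E^{1/2}\bm M_\eta\bm E^{-1/2})\bm w^{(k)}+\bm E^{1/2}\bm Z^\topnew\hat{\bm q}(\bm\delta^{(k)})$, so that $\norm{\bm w^{(k+1)}}\le\rho(\bm M_\eta)\norm{\bm w^{(k)}}+\norm{\bm E^{1/2}}_2\norm{\hat{\bm q}(\bm\delta^{(k)})}$, where Lemma~\ref{lem:delta3} supplies $\norm{\hat{\bm q}(\bm\delta^{(k)})}\le2c_\eta(c_\eta+1)\norm{\bm\delta^{(k)}}^2$. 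To close the recursion I must control $\norm{\bm\delta^{(k)}}$ by $\norm{\bm w^{(k)}}$: Lemma~\ref{lem:delta2} gives the splitting $\bm\delta^{(k)}=\bm Z(\bm Z^\topnew\bm\delta^{(k)})+\bm q(\bm\delta^{(k)})$, which is in fact orthogonal since $\bm Z\bm Z^\topnew$ is the orthogonal projector onto $\mathrm{range}(\bm Z)$, with $\norm{\bm q(\bm\delta^{(k)})}\le2\norm{\bm\delta^{(k)}}^2$; hence $\norm{\bm\delta^{(k)}}\le\norm{\bm Z^\topnew\bm\delta^{(k)}}+2\norm{\bm\delta^{(k)}}^2\le\norm{\bm E^{-1/2}}_2\norm{\bm w^{(k)}}+2\norm{\bm\delta^{(k)}}^2$. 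Since $\bm E$ is diagonal, $\norm{\bm E^{1/2}}_2=(1-\eta\underline\gamma)^{1/2}$ and $\norm{\bm E^{-1/2}}_2=(1-\eta\overline\gamma)^{-1/2}$; substituting the (to leading order tight) estimate $\norm{\bm\delta^{(k)}}\lesssim\norm{\bm E^{-1/2}}_2\norm{\bm w^{(k)}}$ into the residual bound yields $a_{k+1}\le\rho(\bm M_\eta)\,a_k+C\,a_k^2$ with $a_k=\norm{\bm w^{(k)}}$ and $C=2c_\eta(c_\eta+1)\norm{\bm E^{1/2}}_2\norm{\bm E^{-1/2}}_2^{2}$.

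Now the scalar lemmas finish the job. Lemma~\ref{lem:scalar} gives $a_k\to0$ whenever $a_0<(1-\rho(\bm M_\eta))/C$, and Lemma~\ref{lem:scalar2} gives $a_k\le\bigl(1-a_0C/(\rho(\bm M_\eta)(1-\rho(\bm M_\eta)))\bigr)^{-1}a_0\,\rho^k(\bm M_\eta)$ whenever $a_0<\rho(\bm M_\eta)(1-\rho(\bm M_\eta))/C$. Translating back with $a_0\le\norm{\bm E^{1/2}}_2\norm{\bm\delta^{(0)}}$: since $C\norm{\bm E^{1/2}}_2=2c_\eta(c_\eta+1)\norm{\bm E^{1/2}}_2^{2}\norm{\bm E^{-1/2}}_2^{2}=2c_\eta(c_\eta+1)\frac{1-\eta\underline\gamma}{1-\eta\overline\gamma}$, the first threshold is implied by $\norm{\bm\delta^{(0)}}<(1-\rho(\bm M_\eta))/(C\norm{\bm E^{1/2}}_2)=c_0(\bm x^*,\eta)$ and the second by $\norm{\bm\delta^{(0)}}<\rho(\bm M_\eta)c_0(\bm x^*,\eta)=c_1(\bm x^*,\eta)$; moreover $a_0/(\rho(\bm M_\eta)(1-\rho(\bm M_\eta))/C)\le\norm{\bm\delta^{(0)}}/c_1(\bm x^*,\eta)$, and finally $\norm{\bm\delta^{(k)}}\le\norm{\bm E^{-1/2}}_2\norm{\bm w^{(k)}}$ converts $a_0\,\rho^k$ into $\norm{\bm E^{-1/2}}_2\norm{\bm E^{1/2}}_2\norm{\bm\delta^{(0)}}\rho^k=\bigl((1-\eta\underline\gamma)/(1-\eta\overline\gamma)\bigr)^{1/2}\norm{\bm\delta^{(0)}}\rho^k$. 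This reproduces both claims, including (\ref{equ:rho1_norm_delta}) verbatim.

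The delicate point is that $\norm{\bm\delta^{(k)}}\le\norm{\bm E^{-1/2}}_2\norm{\bm w^{(k)}}+2\norm{\bm\delta^{(k)}}^2$ is \emph{implicit} in $\norm{\bm\delta^{(k)}}$, so one cannot simply discard the quadratic remainder; the argument must carry an auxiliary induction ensuring the iterates remain in a regime where this remainder is genuinely dominated, which is exactly why the smaller radius $c_1=\rho(\bm M_\eta)c_0<c_0$ and the prefactor $(1-\norm{\bm\delta^{(0)}}/c_1)^{-1}$ appear; Lemmas~\ref{lem:scalar} and \ref{lem:scalar2} are stated precisely so as to absorb this behavior. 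The only remaining effort is the bookkeeping: verifying that the constants coming from $\norm{\bm E^{\pm1/2}}_2$ reproduce the stated $c_0$, $c_1$, and the factor $\bigl((1-\eta\underline\gamma)/(1-\eta\overline\gamma)\bigr)^{1/2}$.
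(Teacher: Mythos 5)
Your two spectral observations are exactly the ones the paper uses: the similarity $\bm E^{1/2}\bm M_\eta\bm E^{-1/2}=\bm I_N-\eta\,\bm E^{-1/2}\bm H\bm E^{-1/2}$ symmetrizes $\bm M_\eta$ so that a spectral \emph{norm} bound of exactly $\rho(\bm M_\eta)$ becomes available, Lemma~\ref{lem:eig_Hn} transfers the spectrum through $\bm Z$, and Lemma~\ref{lem:scalar2} plus the conversion factors $\norm{\bm E^{\pm1/2}}_2=(1-\eta\underline\gamma)^{1/2},(1-\eta\overline\gamma)^{-1/2}$ produce $c_0$, $c_1$, and the prefactor in (\ref{equ:rho1_norm_delta}). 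Where you diverge is the choice of working variable: the paper sets $\tilde{\bm\delta}^{(k)}=(\bm E^{1/2}\otimes\bm I_2)\bm\delta^{(k)}$, a \emph{full} $2N$-dimensional vector, and uses Lemma~\ref{lem:ZD} to push the Kronecker factor through $\bm Z$; you instead set $\bm w^{(k)}=\bm E^{1/2}\bm Z^\topnew\bm\delta^{(k)}\in\R^N$. The paper's choice means the quadratic remainder from Lemma~\ref{lem:delta3} is bounded directly by $\norm{\tilde{\bm\delta}^{(k)}}^2$ (up to the explicit constants), and the back-conversion $\norm{\bm\delta^{(k)}}\le\norm{\bm E^{-1/2}\otimes\bm I_2}_2\norm{\tilde{\bm\delta}^{(k)}}$ is a clean linear two-sided estimate. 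No implicit inequality ever appears.

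Your choice creates the gap you yourself flag: to close the recursion you need $\norm{\bm\delta^{(k)}}$ controlled by $\norm{\bm w^{(k)}}$, and what Lemma~\ref{lem:delta2} actually gives is $\norm{\bm\delta^{(k)}}\le\norm{\bm E^{-1/2}}_2\norm{\bm w^{(k)}}+2\norm{\bm\delta^{(k)}}^2$, which is implicit in $\norm{\bm\delta^{(k)}}$. Writing ``$\lesssim$'' and ``to leading order tight'' does not discharge this: solving the implicit inequality requires an a priori smallness assumption on $\norm{\bm\delta^{(k)}}$ (hence an auxiliary induction you do not carry out), and it returns $\norm{\bm\delta^{(k)}}\le\bigl(1-\sqrt{1-8\norm{\bm E^{-1/2}}_2\norm{\bm w^{(k)}}}\bigr)/4$ rather than $\norm{\bm E^{-1/2}}_2\norm{\bm w^{(k)}}$ itself, which perturbs the quadratic coefficient $C$ and therefore $c_0$, $c_1$, and the prefactor. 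So the claim that your bookkeeping reproduces (\ref{equ:rho1_norm_delta}) ``verbatim'' is not justified. You also misattribute the role of the factor $\bigl(1-\norm{\bm\delta^{(0)}}/c_1\bigr)^{-1}$ and of the shrunken radius $c_1=\rho(\bm M_\eta)c_0$: these come from Lemma~\ref{lem:scalar2} applied to an already-explicit scalar recursion $b_{k+1}\le\rho b_k+qb_k^2$ and have nothing to do with absorbing your implicit remainder. The fix is simple and is what the paper does: keep the full vector $(\bm E^{1/2}\otimes\bm I_2)\bm\delta^{(k)}$, use $\bm Z^\topnew\bm Z=\bm I_N$ together with Lemma~\ref{lem:ZD} to commute the diagonal factor past $\bm Z$, and the implicit step disappears entirely.
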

\noindent The proof of Lemma~\ref{lem:rho2} is given in Appendix~\ref{appdx:rho2}.
With this lemma, we complete our proof of Theorem~\ref{theo:rate}.






\section{Implementation Aspects}
\label{sec:adaptive}

This subsection describes two practical variants of PGD with adaptive step size that can be used when no prior knowledge of the solution is available: PGD with backtracking line search (Algorithm~\ref{algo:bt}) and Nesterov's accelerated PGD with adaptive restart (Algorithm~\ref{algo:arnag}). 

\subsection{Backtracking PGD (Bt-PGD)}

In backtracking PGD, the step size is chosen to approximately minimize the objective function $f(\bm x) = \frac{1}{2} \norm{\bm A \bm x - \bm b}^2$ along the ray $\{ \bm x -\eta \tilde{\bm g}_{\eta} \mid \eta>0 \}$, where
\begin{align*}
    \tilde{\bm g}_{\eta} = \frac{1}{\eta} \Bigl( \bm x - \P_\C\bigl(\bm x - \eta \bm A^\topnew (\bm A \bm x - \bm b) \bigr) \Bigr)
\end{align*}
is the generalized gradient. To guarantee certain decrease in the objective function, we use the following backtracking condition \cite{beck2017first} 
\begin{align} \label{equ:f1}
    f(\bm x - \eta \tilde{\bm g}_{\eta}) \leq f(\bm x) - \eta \tilde{\bm g}_{\eta}^\topnew \nabla f(\bm x) + \frac{\eta}{2} \norm{\tilde{\bm g}_{\eta}}^2 .
\end{align}
Since $f(\cdot)$ is a quadratic, it can be expanded as
\begin{align} \label{equ:f2}
    f(\bm x - \eta \tilde{\bm g}_{\eta}) = f(\bm x) - \eta \tilde{\bm g}_{\eta}^\topnew \nabla f(\bm x) + \eta^2 \tilde{\bm g}_{\eta_k}^\topnew \nabla^2 f \tilde{\bm g}_{\eta_k} .
\end{align}
Substituting (\ref{equ:f2}) back into the LHS of (\ref{equ:f1}) and using the fact that $\nabla^2 f = \bm A^\topnew \bm A$, we obtain the simplified backtracking condition $\tilde{\bm g}_{\eta_k}^\topnew \bm A^\topnew \bm A \tilde{\bm g}_{\eta_k} \leq \frac{1}{\eta_k} \norm{\tilde{\bm g}_{\eta_k}}^2$ as in Algorithm~\ref{algo:bt}-Line~8.
It is worthwhile to note that a factor of $1/\alpha$ is applied to increase the step size at the end of each iteration to encourage the algorithm to explore larger step sizes with faster convergence. We emphasize that this strategy is different from the well-known backtracking line search method in the literature (e.g., \cite{boyd2004convex}), in which the step size $\eta$ is reset to $1$ before the backtracking line search is performed. As a result, the constant $\alpha$ in Algorithm~\ref{algo:bt} should not be interpreted as the fraction of the decrease in the objective function as in \cite{boyd2004convex}-Algorithm~9.2.

\begin{algorithm}[t]
\caption{Backtracking PGD (Bt-PGD)}
\label{algo:bt}
\begin{algorithmic}[1]
\Require{$\bm x^{(0)} \in \R^{2N}$, $\alpha \in (0,1]$, $\beta \in (0,1)$}
\Ensure{$\{\bm x^{(k)}\}_{k=0}$}
\State $\eta_0=1$
\For{$k=0,1,2,\ldots$}
\State $\bm g_k = \bm A^\topnew (\bm A \bm x^{(k)} - \bm b)$
\State $\eta_k=\eta_k/\beta$
\Repeat
    \State $\eta_k = \beta \eta_k$
    \State $\tilde{\bm g}_{\eta_k} = ( \bm x^{(k)} - \P_\C ( \bm x^{(k)} - \eta_k \bm g_k ) ) / \eta_k$
\Until{$\tilde{\bm g}_{\eta_k}^\topnew \bm A^\topnew \bm A \tilde{\bm g}_{\eta_k} \leq \frac{1}{\eta_k} \norm{\tilde{\bm g}_{\eta_k}}^2$}
\State $\bm x^{(k+1)} = \bm x^{(k)} - \eta_k \tilde{\bm g}_{\eta_k}$
\State $\eta_{k+1}=\eta_k/\alpha$
\EndFor
\end{algorithmic}
\end{algorithm}

\subsection{Adaptive Restart Nesterov's Accelerated PGD (ARNAPGD)}
Next, we present an acceleration technique for PGD, named adaptive restart Nesterov's accelerated projected gradient descent (ARNAPGD).
In unconstrained optimization, it has been well-known that Nesterov's accelerated gradient (NAG) \cite{nesterov2003introductory} can dramatically improve the linear convergence rate of gradient descent (GD) for minimizing a $\mu$-strongly convex, $L$-smooth function. As pointed out in \cite{lessard2016analysis}-Proposition~12, GD with a fixed step size $\alpha=1/L$ has convergence rate $\rho \leq \sqrt{(L-\mu)/(L+\mu)}$, while NAG with fixed parameters $\alpha=1/L$ and $\beta=(\sqrt{L}-\sqrt{\mu})/(\sqrt{L}+\sqrt{\mu})$ has convergence rate $\rho \leq \sqrt{1-\sqrt{\mu/L}}$.
Since NAG requires a specific choice of parameters that depends on $L$ and $\mu$, O'Donoghue and Candes \cite{o2015adaptive} proposed a more practical variant called the Nesterov's accelerated gradient with adaptive restart (ARNAG) that recovers the same rate of convergence with no prior knowledge of function parameters.
In this work, we modify ARNAG with gradient scheme to the context of PGD for constrained optimization.
Specifically, each iteration uses backtracking line search for determining the \textit{projected} gradient step $\eta$ and the \textit{generalized} gradient scheme for determining when to restart the momentum. The advantage of this acceleration is it has the same computational complexity per iteration as PGD and Bt-PGD\footnote{The number of matrix-vector products in ARNAPGD is exactly the same as that in Bt-PGD.} while achieving significantly faster convergence rate.
Further details on ARNAPGD are provided in Algorithm~\ref{algo:arnag}. In the next section, we compare the performance of PGD with a fixed optimal step size, Bt-PGD, and ARNAPGD for UMLS.

\begin{algorithm}[t]
\caption{Adaptive restart Nesterov's accelerated PGD (ARNAPGD) with gradient scheme}
\label{algo:arnag}
\begin{algorithmic}[1]
\Require{$\bm x^{(0)} \in \R^{2N}$, $\alpha \in (0,1]$, $\beta \in (0,1)$}
\Ensure{$\{\bm x^{(k)}\}_{k=0}$}
\State $\eta_0=1$
\State $\theta_0=1$
\State $\bm y^{(0)} = \bm x^{(0)}$
\For{$k=0,1,2,\ldots$}
\State $\bm g_k = \bm A^\topnew (\bm A \bm y^{(k)} - \bm b)$
\State $\eta_k=\eta_k/\beta$
\Repeat
    \State $\eta_k = \beta \eta_k$
    \State $\tilde{\bm g}_{\eta_k} = ( \bm y^{(k)} - \P_\C ( \bm x^{(k)} - \eta_k \bm g_k ) ) / \eta_k$
\Until{$\tilde{\bm g}_{\eta_k}^\topnew \bm A^\topnew \bm A \tilde{\bm g}_{\eta_k} \leq \frac{1}{\eta_k} \norm{\tilde{\bm g}_{\eta_k}}^2$}
\State $\bm x^{(k+1)} = \bm y^{(k)} - \eta_k \tilde{\bm g}_{\eta_k}$
\State $\theta_{k+1} = \frac{2\theta_k}{\theta_k + \sqrt{\theta_k^2+4}}$
\State $\beta_{k+1} = \theta_k (1-\theta_k)/(\theta_k^2+\theta_{k+1})$
\State $\bm y^{(k+1)} = \bm x^{(k+1)} + \beta_{k+1} (\bm x^{(k+1)} - \bm x^{(k)})$
\State $\eta_{k+1}=\eta_k/\alpha$
\If{$\tilde{\bm g}_{\eta_k}^\topnew (\bm x^{(k+1)} - \bm x^{(k)}) > 0$}
    \State $\theta_{k+1}=0$
\EndIf
\EndFor
\end{algorithmic}
\end{algorithm}

\section{Numerical Evaluation}
\label{sec:exp}

This section demonstrates the correctness of our theoretical result on the linear convergence of PGD for UMLS in Theorem~\ref{theo:rate}. We show through numerical simulation that our predicted rate of convergence matches the decrease in the distance to the solution through iterations. Moreover, we illustrate the effectiveness of the two variants of PGD with adaptive step sizes proposed in Section~\ref{sec:adaptive}. Finally, we present a simple 2-D example of the region of convergence to demonstrate our theoretical bound in (\ref{equ:c0}).

\subsection{PGD with a Fixed Step Size}

\begin{figure*}
    \centering
    \begin{subfigure}[b]{0.48\textwidth}
        \centering
        \includegraphics[width=\textwidth]{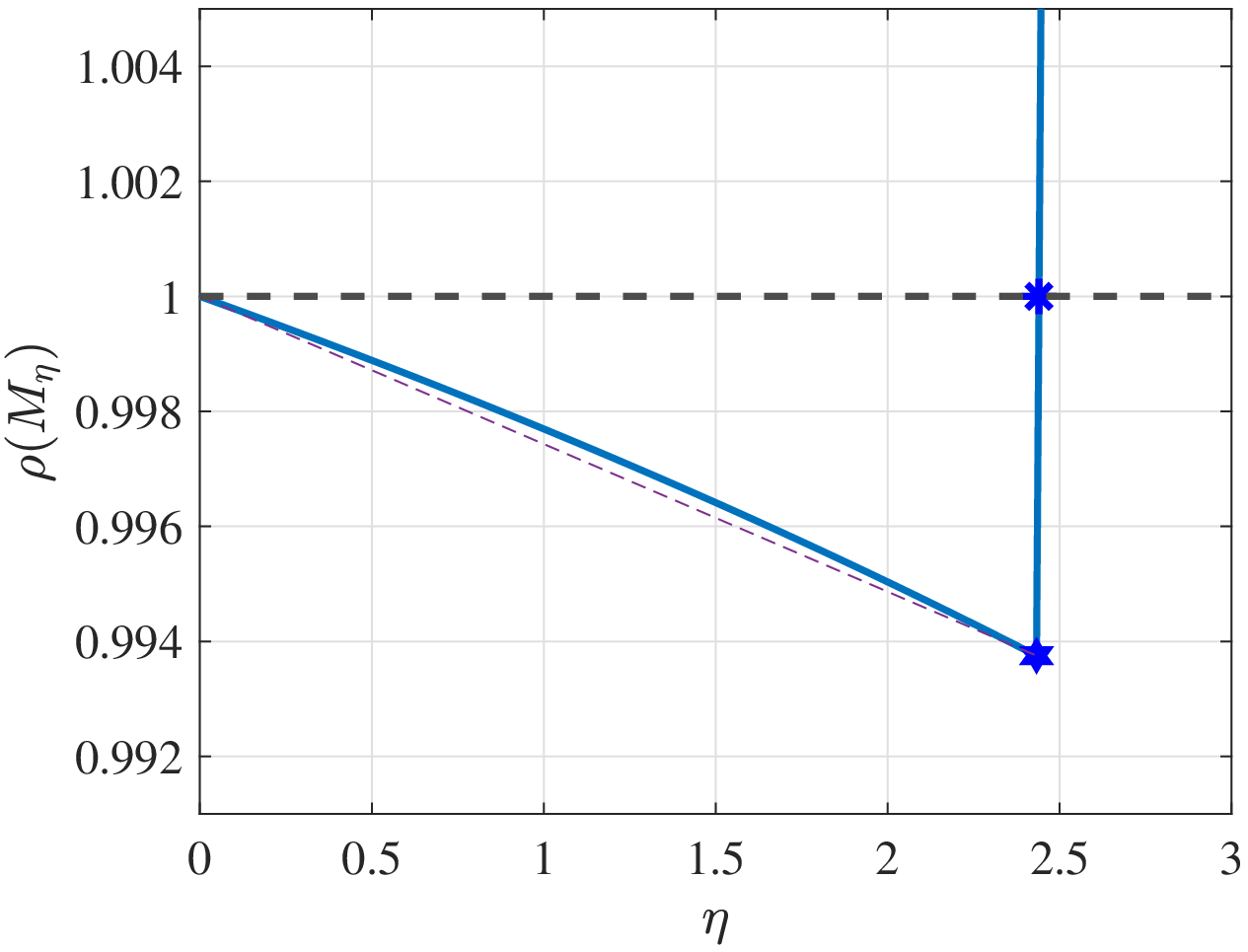}
        \caption{}
    \end{subfigure}
    \quad
    \begin{subfigure}[b]{0.48\textwidth}
        \centering
        \includegraphics[width=\textwidth]{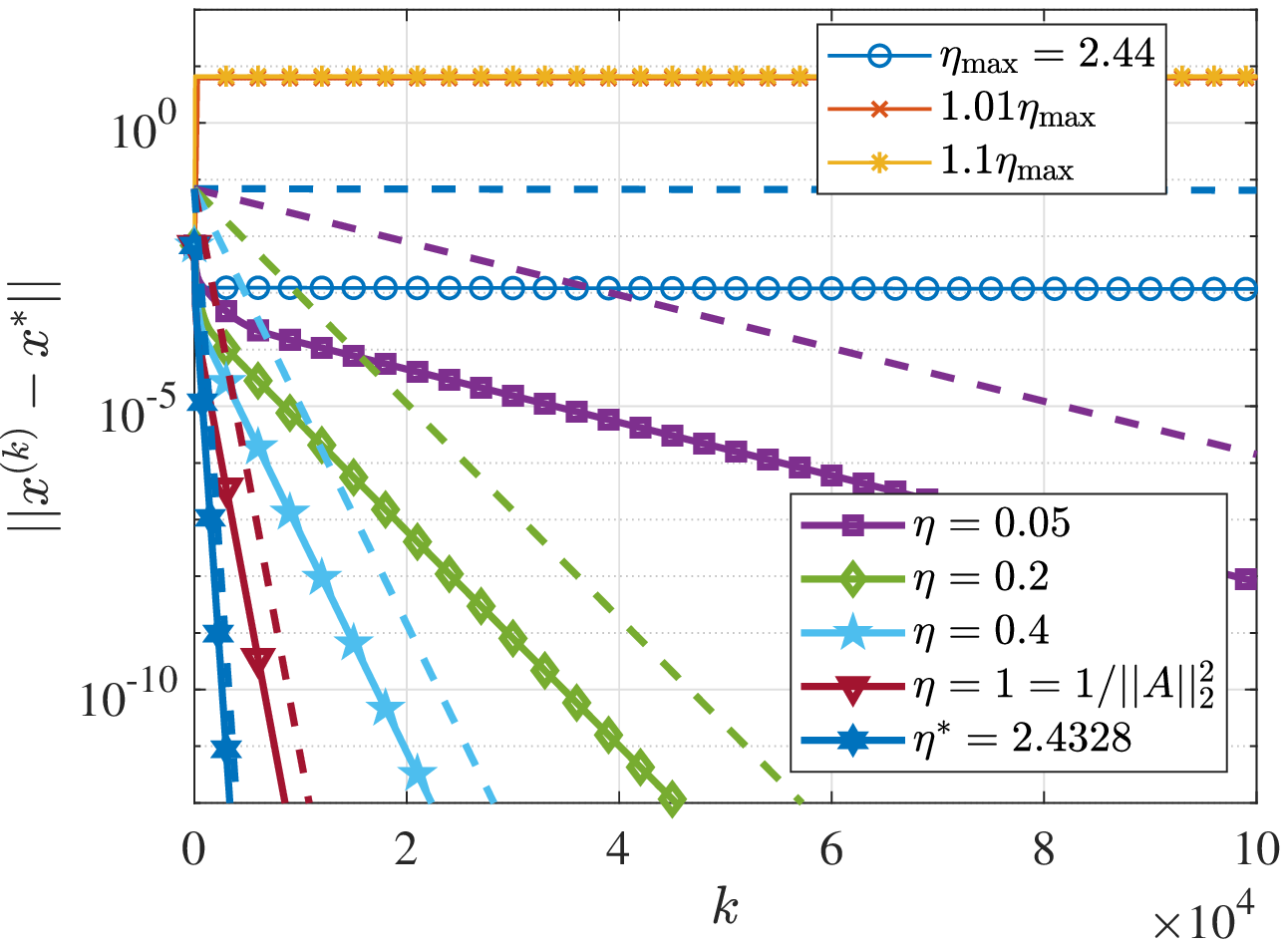}
        \caption{}
    \end{subfigure}
    \caption{Convergence of PGD with a fixed step size for UMLS. (a) Plot of the convergence rate $\rho(\bm M_\eta)$ as a function of the step size $\eta$. The black dashed line is the line $\eta=1$, emphasizing that the local convergence is guaranteed when $\rho(\bm M_\eta)<1$. The blue star represents the maximum step size $\eta_{\max}$ such that $\rho(\bm M_{\eta_{\max}})=1$, while the blue hexagram represents the optimal step size is $\eta^* = \argmin_{\eta>0} \rho(\bm M_\eta)$. The purple dashed line connecting the blue hexagram and the point (0,1) is included to demonstrate the non-linearity of $\rho(\bm M_\eta)$ for $\eta \in (0,\eta^*)$. (b) Plot of the distance between the current update and the local minimum as a function of the number of iterations for various fixed step sizes. Dashed lines represent the corresponding upper bounds with exponential decay, i.e., $\rho^k(\bm M_\eta)$ up to a constant.} 
    \label{fig:umls}
\end{figure*}

\noindent \textbf{Data generation}.
In the following, we create an UMLS setting in which $\bm x^* \in \C$ satisfies
\begin{align*}
    \begin{cases}
        \bm A^\topnew (\bm A \bm x^* - \bm b) = (\diag(\bm \gamma) \otimes \bm I_2) \bm x^* \\
        \bm H = \bm Z^\topnew \bm A^\topnew \bm A \bm Z - \diag(\bm \gamma) \succ \bm 0_N 
    \end{cases}
\end{align*}
as follows.
First, we generate two matrices $\Re$ and $\Im$ of size $M \times N$, where $M=50$ and $N=40$, with i.i.d normally distributed ($\N(0,1)$) entries. The matrix $\bm A$ is computed from $\Re$ and $\Im$ using (\ref{equ:A_ij}).
Second, we generate a random vector $\bm v \in \R^N$ with $i.i.d$ normally distributed entries following $\N(0,0.1^2)$ and a random vector $\bm t \in \{-1,1\}^N$ with uniformly distributed entries.
Then, we obtain $\bm x^*$ and $\bm \gamma$ by setting
\begin{align*}
\begin{cases}
    \gamma_i = t_i \norm{\bm S_i(\bm A^\topnew \bm v)} \\
    \bm S_i(\bm x^*) = \bm S_i(\bm A^\topnew \bm v) / \gamma_i
\end{cases}
     \quad \text{ for } i=1,\ldots,N .
\end{align*}
Next, the matrices $\bm Z$ and $\bm H$ are obtained by (\ref{equ:Z}) and (\ref{equ:H}), respectively.
If $\bm H$ is not PD, we re-run the foregoing generation process multiple times until $\bm H \succ \bm 0_N$. This guarantees Condition (C1) in Theorem~\ref{theo:rate} is satisfied. 
Finally, we compute $\bm b = \bm A \bm x^* - \bm v$ and initialize $\bm x^{(0)}$ near $\bm x^*$ by adding a random noise with $i.i.d$ normally distributed entries following $\N(0,0.001^2)$ to $\bm x^*$.

\noindent \textbf{Results.} Figure~\ref{fig:umls}(a) demonstrates the convergence rate $\rho(\bm M_\eta)$ (blue solid line) as a function of the step size $\eta$. Recall that $\bm M_\eta = \bm I_N - \eta (\bm I_N - \eta \diag(\bm \gamma) )^{-1} \bm H$ and hence, $\rho(\bm M_\eta)$ is a non-linear function of $\eta$. It can be seen from the plot that $\rho(\bm M_\eta)$ approaches $1$ (slow convergence) when $\eta$ approaches either $0$ or $\eta_{\max}=2.44$. The optimal step size that yields the fastest convergence for PGD with a fixed step size is $\eta^*=\argmin_{\eta>0} \rho(\bm M_\eta)=2.4328$. 
Figure~\ref{fig:umls}(b) shows the convergence of PGD with various fixed step sizes.
We observe that for $\eta>\eta_{\max}$ (the overlapping red and yellow solid lines), the algorithm diverges from the designed strict local minimum $\bm x^*$.
For step sizes less than $\eta_{\max}$, our theoretical rate (dashed lines) matches well with the empirical rate (solid lines).
Moreover, PGD with the optimal step size $\eta^*$ converges roughly twice as fast as one with the step size $\eta=1/\norm{\bm A}_2^2$ proposed in \cite{tranter2017fast}, suggesting that the latter choice, while being commonly used in the literature, is conservative.

\subsection{Adaptive Schemes for Step Size}

\begin{figure*}
    \centering
    \begin{subfigure}[b]{0.48\textwidth}
        \centering
        \includegraphics[width=\textwidth]{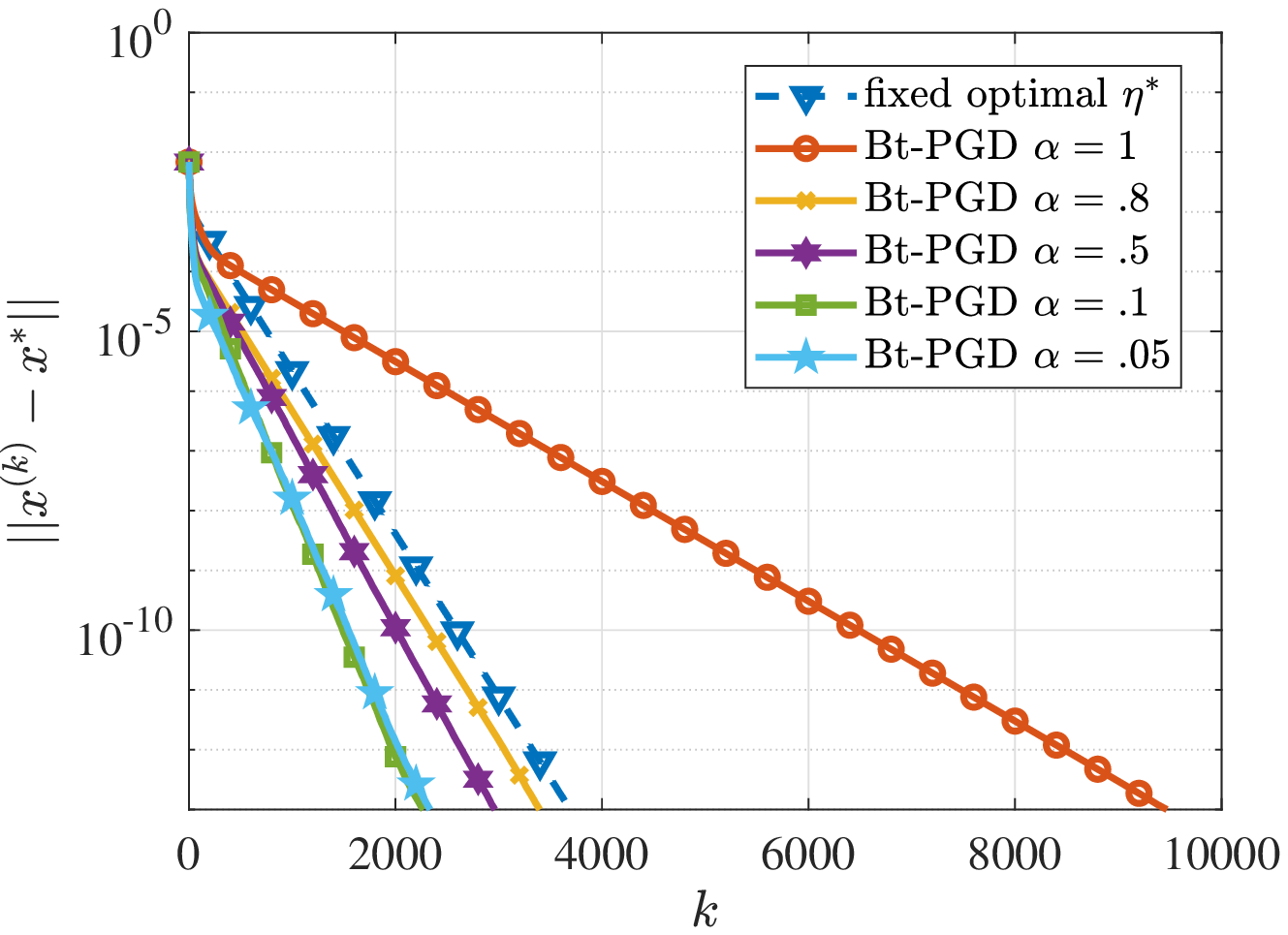}
        \caption{}
    \end{subfigure}
    \quad
    \begin{subfigure}[b]{0.48\textwidth}
        \centering
        \includegraphics[width=\textwidth]{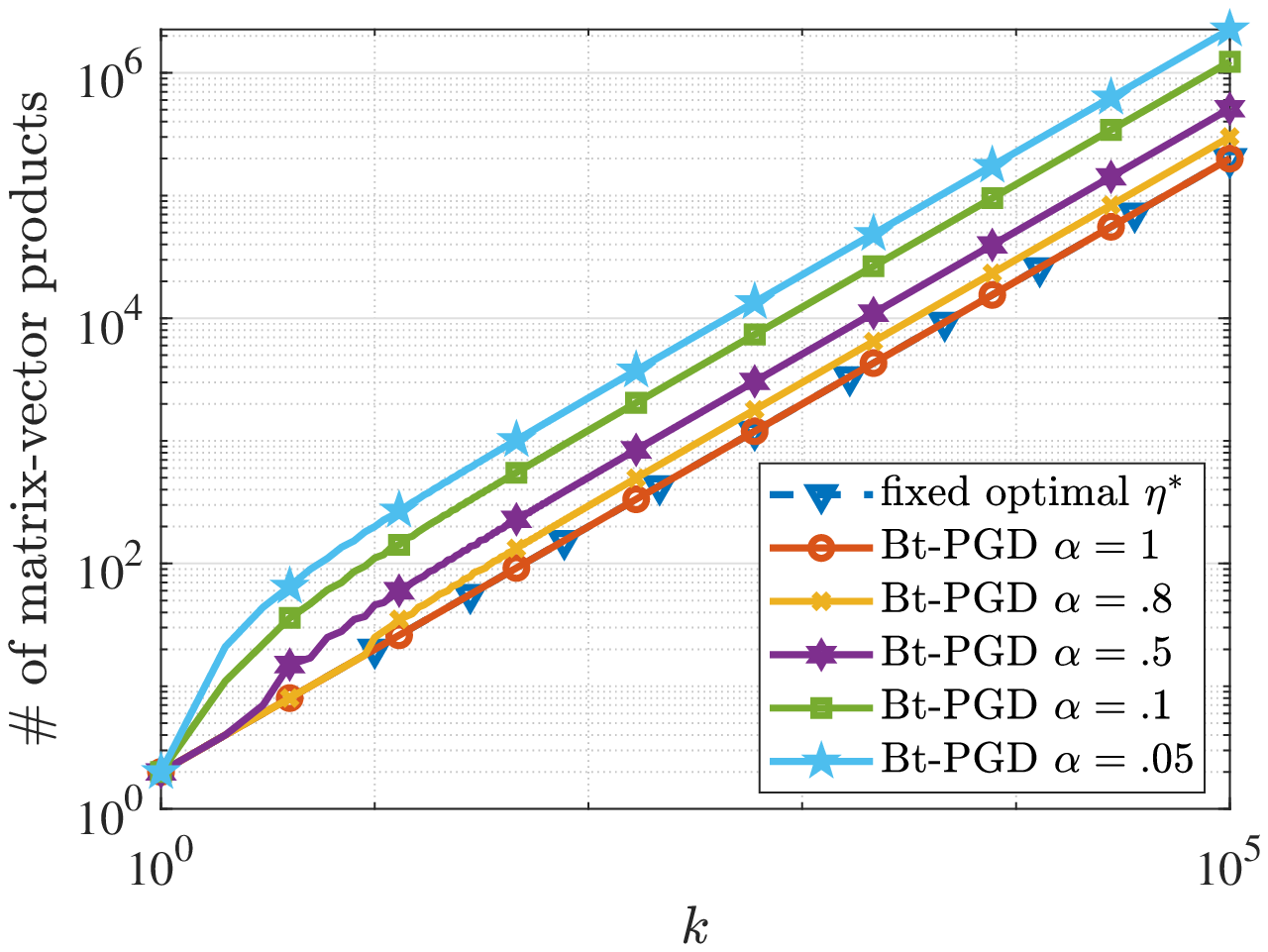}
        \caption{}
    \end{subfigure}
    \quad
    \begin{subfigure}[b]{0.48\textwidth}
        \centering
        \includegraphics[width=\textwidth]{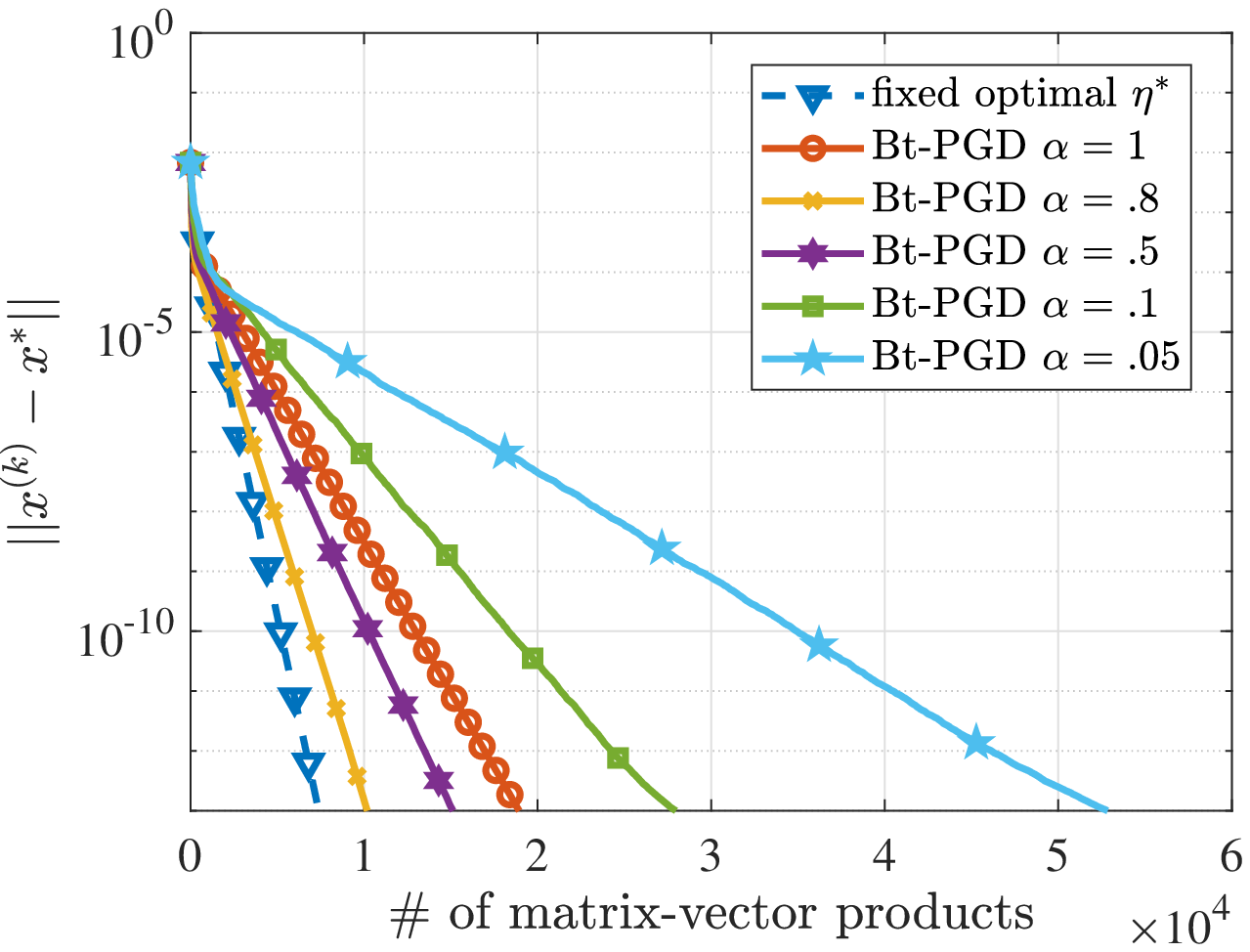}
        \caption{}
    \end{subfigure}
    \quad
    \begin{subfigure}[b]{0.48\textwidth}
        \centering
        \includegraphics[width=\textwidth]{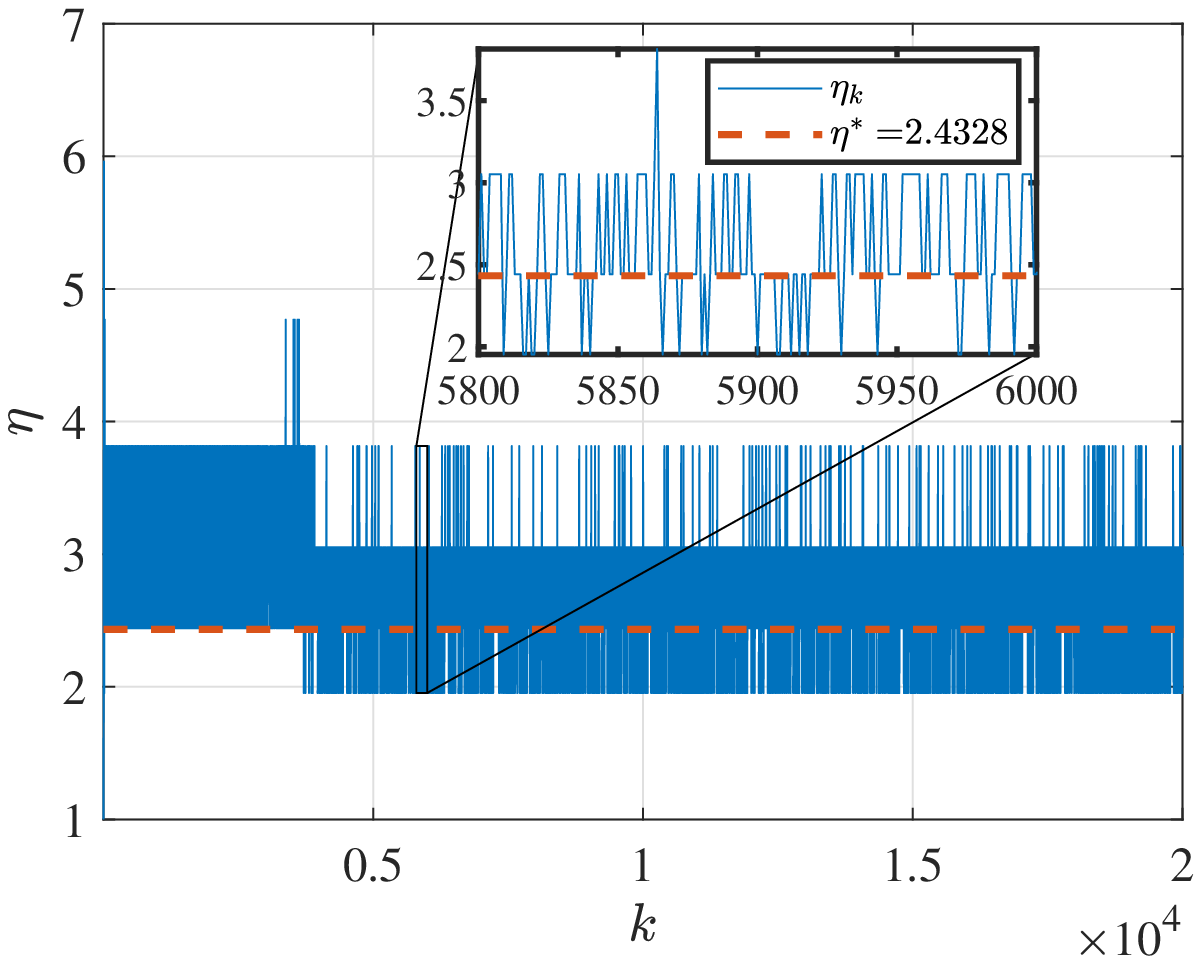}
        \caption{}
    \end{subfigure}
    
    \caption{Convergence of Bt-PGD with various values of $\alpha$ and a fixing value of $\beta=0.8$. (a) Plot of the distance from the current update of Bt-PGD to the local minimum as a function of the number of iterations. A dashed blue line is included as an illustration of the convergence of PGD with the fixed optimal step size $\eta^*$. (b) Plot of the number of matrix-vector products used by Bt-PGD as a function of the number of iterations. (c) Plot of the distance from the current update of Bt-PGD to the local minimum as a function of the number of matrix-vector products. (d) Plot of the backtracking step size $\eta$ as a function of the number of iterations for Bt-PGD with $\alpha=\beta=0.8$. A zoom-in plot is included on top of the original plot for enhanced visualization. After a few thousand iterations, we observe that the adaptive step size $\eta_k$ fluctuates around the optimal step size $\eta^*=2.4328$ (red dashed line).}
    \label{fig:bt}
\end{figure*}

To illustrate the role of $\alpha$ in exploring larger step sizes with faster convergence while balancing the cost of backtracking steps, we plot the error through iterations $\norm{\bm x^{(k)} - \bm x^*}$ against the number of matrix-vector products, which dominates the computational complexity per iteration, in  Fig.~\ref{fig:bt}. 
The data used in this simulation is the same as in the previous section.
While the smaller values of $\alpha$ seems to yields faster convergence (see Fig.~\ref{fig:bt}(a)), they indeed require more backtracking steps at each iteration (see Fig.~\ref{fig:bt}(b)). As a result, the overall computation is higher for smaller values of $\alpha$. It can be seen from Fig.~\ref{fig:bt}(c) that the best choice of $\alpha$ is $\alpha=\beta=0.8$. In addition, we observe that the total cost of Bt-PGD is comparable to that of PGD with the optimal fixed step size. However, Bt-PGD does not use any prior knowledge about the solution $\bm x^*$. 
Fig.~\ref{fig:bt}(d) shows the fluctuation in the step size $\eta$ around the optimal value $\eta^*=2.4328$. It is interesting to note that even though $\eta>\eta_{\max}$ at some iterations, the algorithm is able to converge to the designed local minimum $\bm x^*$.

Figure~\ref{fig:adaptive} compares the performance of four algorithms in solving the foregoing UMLS setting: PGD with a fixed step $\eta=1/\norm{\bm A}_2^2$ (used in \cite{tranter2017fast}), PGD with a fixed optimal step $\eta^*$ (given by (\ref{equ:eta_opt})), Bt-PGD with the optimal choice $\alpha=\beta=0.8$ (Algorithm~\ref{algo:bt}), and ARNAPGD (Algorithm~\ref{algo:arnag}).
The data used in this simulation is the same as in the previous section.
We observe that all three algorithms proposed in this work outperforms PGD with the commonly used step size $\eta=1/\norm{\bm A}_2^2$ (blue solid line).
It is also highlighted that ARNAPGD (purple solid line) obtains significantly faster convergence compared to the other algorithms while remaining similar computational complexity per iteration.
Finally, we note that both Algorithm~\ref{algo:bt} and Algorithm~\ref{algo:arnag} do not come with convergence guarantees in our setting since $\C$ is non-convex. Nonetheless, on the practical side, they do not require prior knowledge of the solution and their effectiveness is depicted clearly through our numerical results. 

\begin{figure}[t]
    \centering
    \includegraphics[width=\columnwidth]{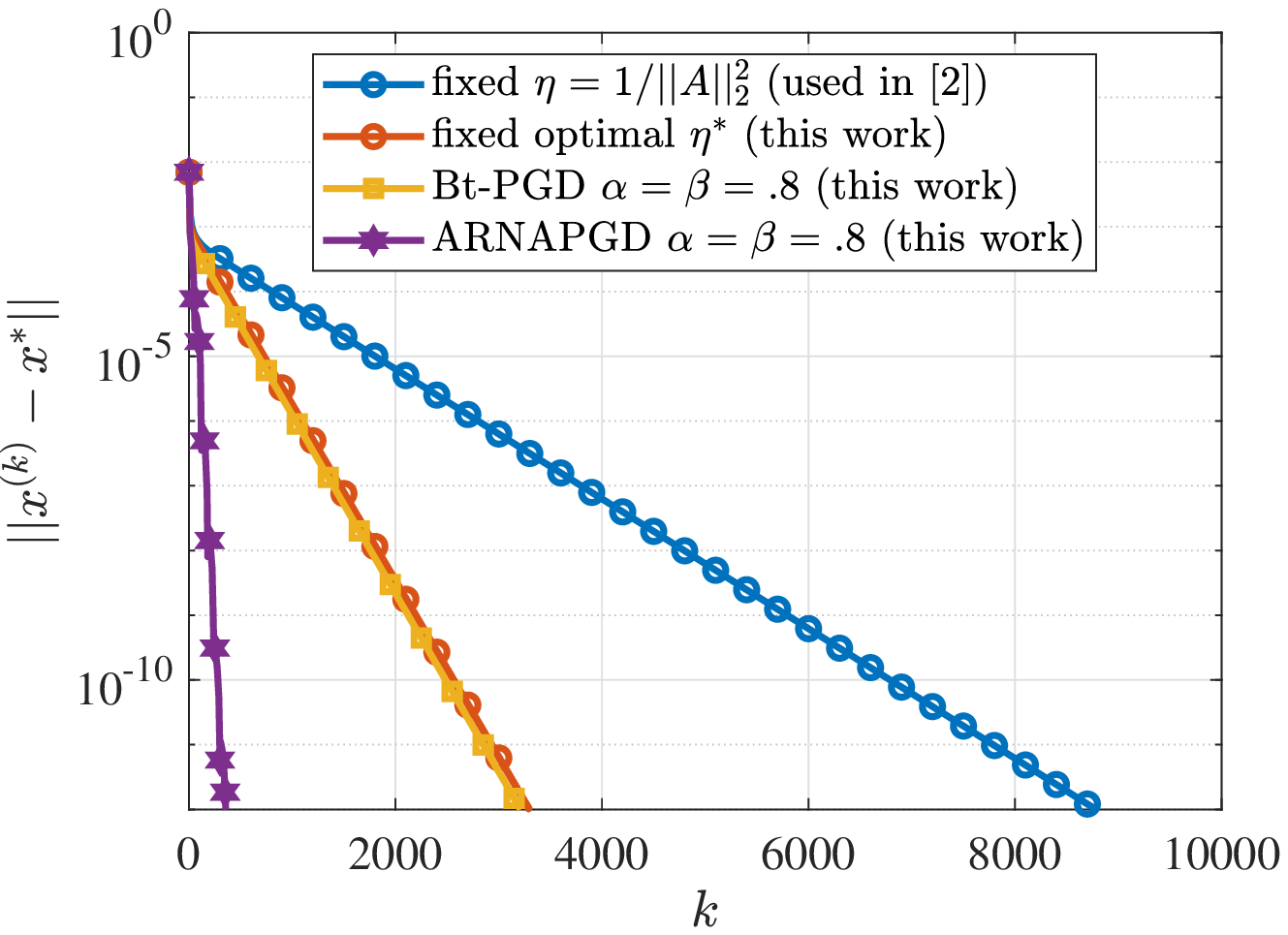}
    \caption{Plot of the distance from the current update to the local minimum $\bm x^*$ as a function of the number of iterations, for four algorithms: PGD with a fixed step $\eta=1/\norm{\bm A}_2^2$ (blue solid line), PGD with a fixed optimal step $\eta^*$ given by (\ref{equ:eta_opt}) (red solid line), Bt-PGD with $\alpha=\beta=0.8$ (yellow solid line), and ARNAPGD (red solid line). All algorithms have the same computational complexity per iteration.}
    \label{fig:adaptive}
\end{figure}

\subsection{Region of Convergence}

\begin{figure}[t]
    \centering
    \includegraphics[width=\columnwidth]{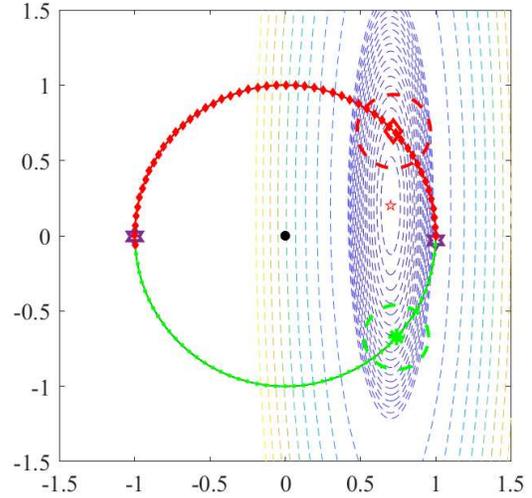}
    \caption{A 2-D illustration of the region of convergence given by the constant $c_0(\bm x^*,\eta)$ in (\ref{equ:c0}). On the circle, the two purple hexagrams denote the local maxima, while the green asterisk and the red diamond denote the local minima of the problem. The red star located inside the circle is the solution to the unconstrained least squares. For a given fixed step size $\eta$, each local minimum is associated with (i) an estimated region of convergence (dashed circle) given by $c_0(\bm x^*,\eta)$ and (ii) an empirical region of convergence (circular arc with matching color) given by running PGD with the fixed step size $\eta$ and initialization at a given point on the circle to verify which local minimum it converges to.}
    \label{fig:example}
\end{figure}

In this subsection, we demonstrate the region of local convergence for PGD in a 2-D setting.
Since $N=1$ in this case, the constraint set $\C$ is indeed a 2-D circle. As can be seen from Fig.~\ref{fig:example}, the least squares objective has an unconstrained global minimum at $\bm x_{unc}^* = [0.7,0.2]^\topnew$, with $\bm A = \diag([5,1])$ and $\bm b = [3.5, 0.2]^\topnew$.
Using Lemma~\ref{lem:KKT}, we can find the four stationary points of the 2-D UMLS problem by solving the following system of non-linear equations
\begin{align*}
\begin{cases}
    x_1^2 + x_2^2 = 1 , \\
    25 x_1 - 17.5 = \gamma x_1 , \\
    x_2 - 0.2 = \gamma x_2 .
\end{cases}
\end{align*}
Moreover, based on the positivity of the reduced Riemannian Hessian $h = 25 x_2^2 + x_1^2 - \gamma$ (which is a scalar in the 2-D setting), one can apply Lemma~\ref{lem:KKT2} to determine the two local maxima (purple hexagrams) and two local minima (green asterisk and red diamond). 
Additionally, for each local minima, the rate of convergence is given by $\rho_\eta = 1 - \eta h / (1-\eta \gamma)$, with the maximum possible step size $\eta_{\max} = 2/(h+2\gamma)$.
In Fig.~\ref{fig:example}, we pick $\eta=0.0755$ and compute the theoretical region of convergence for each local minima using (\ref{equ:c0}). On the other hand, the empirical region of convergence is obtained follows. First, we run PGD with $\eta=0.0755$ and $1000$ different initialization uniformly distributed on the unit circle. Next, we check whether the algorithm stops inside the theoretical region of convergence after $1000$ iterations to determine if it converges to the corresponding local minimum. Finally, we color the initialization points by the color of the corresponding local minimum PGD converges to (either green or red).
While Fig.~\ref{fig:example} verifies that our theoretical region of convergence falls inside the empirical region of convergence, it also reveals that our bound is conservative in this example.

\section{Conclusion and Future Work}
\label{sec:conc}

We introduced a novel analysis of linear convergence of projected gradient descent for the unit-modulus least squares problem. Our analysis reveals that near strict local minima, the convergence is linear as opposed to sublinear as suggested in \cite{tranter2017fast}. Moreover, we identified the sufficient conditions for linear convergence and provided an exact expression of the linear convergence rate. The theoretical rate predicts accurately the asymptotic convergence of PGD for UMLS in our numerical simulation. On the practical side, we propose two variant of PGD with adaptive step sizes that obtain fast convergence without prior knowledge about the solution.

For future work, we plan to improve our bound on the region of convergence. This requires further investigation into the bounding techniques used in the proof of Theorem~\ref{theo:rate}. Another potential direction is to develop the analysis for linear convergence of Bt-PGD and ARNAPGD. While convergence guarantees for backtracking line search and Nesterov's accelerated gradient have been proposed in the optimization literature \cite{boyd2004convex,nesterov2003introductory}, they often involve the spectral radius that depends linearly on the step size $\eta$. The UMLS problem, on the other hand, involve the spectral radius $\rho(\bm M_\eta)$ that depends non-linearly on $\eta$. This makes it challenging for determining closed-form expressions of the optimal step size in both plain PGD and accelerated PGD.

\appendices

\section{Proof of Lemma~\ref{lem:KKT2}}
\label{appdx:KKT2}

Since the constraint gradients are of form $\{\bm e_i \otimes \bm S_i (\bm x^*)\}_{i=1}^N$, the tangent space to $\C$ at $\bm x^*$ is given by
\begin{align*}
    T_{\bm x^*}\C = \Bigl\{ \bm y \in \R^{2N} \mid \bigl( \sum_{i=1}^N \bm e_i \bm e_i^\topnew \otimes \bm S_i (\bm x^*) \bigr)^\topnew \bm y = \bm 0_N \Bigr\} .
\end{align*}
Denote $\bm v_i = [-x^*_{2i}, x^*_{2i-1}]^\topnew$ for $i=1,\ldots,N$. A basis of $T_{\bm x^*}\C$ is given by $\{\bm e_i \otimes \bm v_i \}_{i=1}^N$, i.e., the columns of $\bm Z$. 
Alternatively, $T_{\bm x^*}\C$ can be represented as
\begin{align} \label{def:tangent}
    T_{\bm x^*}\C = \bigl\{ \bm Z \bm z \mid \bm z \in \R^N \bigr\} .
\end{align}

\noindent ($\Rightarrow$) From Chapter~11.5 in \cite{luenberger1984linear}, the second-order necessary condition for a stationary point $\bm x^*$ to be a local minimum of (\ref{prob:umls}) is $\bm y^\topnew \nabla^2_{\bm x} L(\bm x^*, \bm \gamma) \bm y \geq 0$ for all $\bm y \in T_{\bm x^*}\C$. In other words, for any $\bm z \in \R^N$, we have
\begin{align*}
    0 &\leq (\bm Z \bm z)^\topnew \bigl(\bm A^\topnew \bm A - \diag(\bm \gamma) \otimes \bm I_2 \bigr) (\bm Z \bm z) \\
    &= \bm z^\topnew \bigl(\bm Z^\topnew \bm A^\topnew \bm A \bm Z - \bm Z^\topnew (\diag(\bm \gamma) \otimes \bm I_2) \bm Z \bigr) \bm z \\
    &= \bm z^\topnew \bigl(\bm Z^\topnew \bm A^\topnew \bm A \bm Z - \bm Z^\topnew \bm Z \diag(\bm \gamma) \bigr) \bm z \\
    &= \bm z^\topnew \bigl( \bm Z^\topnew \bm A^\topnew \bm A \bm Z - \diag(\bm \gamma) \bigr) \bm z ,
\end{align*}
where the second equality stems from Lemma~\ref{lem:ZD} and the third equality uses the semi-orthogonality of $\bm Z$. Thus, we conclude that $\bm H \succeq \bm 0_N$.

\noindent ($\Leftarrow$) From Chapter~11.5 in \cite{luenberger1984linear}, the second-order sufficient condition for a stationary point $\bm x^*$ to be a local minimum of (\ref{prob:umls}) is $\bm y^\topnew \nabla^2_{\bm x} L(\bm x^*, \bm \gamma) \bm y > 0$ for all $\bm y \in T_{\bm x^*}\C$. By the same argument, this is equivalent to $\bm H \succ \bm 0_N$.

\section{Proof of Remark~\ref{rmk:H_Riemannian}}
\label{appdx:H_Riemannian}

Recall that the objective function is given by $f=\norm{\bm A \bm x - \bm b}^2/2$.
By definition of the Riemannian Hessian \cite{lee2018introduction}, for any vector fields $U,V : \C \to T\C$ on $\C$, we have
\begin{align} \label{def:hess_f}
    \text{Hess} f (U,V) = \langle \nabla_U \text{grad} f , V \rangle ,
\end{align}
where $\text{grad} f : \C \to T\C$ is the Riemannian gradient given by
\begin{align} \label{equ:grad_f}
    \text{grad} f (\bm x) = \bm Z \bm Z^\topnew \nabla f (\bm x) = \bm Z \bm Z^\topnew \bm A^\topnew (\bm A \bm x - \bm b) ,
\end{align}
for $\bm x \in \C$ and $\bm Z$ is the corresponding basis matrix of the tangent space to $\C$ at $\bm x$ (see Lemma~\ref{lem:KKT2}).
In addition, $\nabla_U \text{grad} f$ is the covariant derivative of the vector field $\text{grad} f$ in the direction of the vector field $U$.
It is fact that the covariant derivative is the orthogonal projection of the directional derivative onto the tangent space of the manifold, i.e.,
\begin{align} \label{equ:covar_grad_f}
    \nabla_U \text{grad} f (\bm x) &= \bm Z \bm Z^\topnew D_U \text{grad} f (\bm x) \nonumber \\
    &= \bm Z \bm Z^\topnew \lim_{t \to 0} \frac{\text{grad} f (\bm x + t \bm u) - \text{grad} f (\bm x)}{t} ,
\end{align}
where $\bm u = U(\bm x)$.
Substituting (\ref{equ:grad_f}) into the numerator on the RHS of (\ref{equ:covar_grad_f}) and simplifying the expression, we obtain
\begin{align*}
    &\nabla_U \text{grad} f (\bm x) = \bm Z \bm Z^\topnew \bigl( \bm A^\topnew \bm A \bm u - \bm B \bm A^\topnew (\bm A \bm x - \bm b) \bigr) ,
\end{align*}
where
\begin{align*}
    \bm B = \sum_{i=1}^N \bm e_i \bm e_i^\topnew \otimes \Bigl( \bm S_i(\bm u) \bigl(\bm S_i(\bm x) \bigr)^\topnew + \bm S_i(\bm x) \bigl(\bm S_i(\bm u) \bigr)^\topnew \Bigr) .
\end{align*}
Now, denoting $\bm v = V(\bm x)$ and evaluating (\ref{def:hess_f}) at $\bm x$ yields
\begin{align} \label{equ:hess_f_uv}
    \text{Hess} f_{\bm x} (\bm u, \bm v) &= \bm v^\topnew \bm Z \bm Z^\topnew \bigl( \bm A^\topnew \bm A \bm u - \bm B \bm A^\topnew (\bm A \bm x - \bm b) \bigr) \nonumber \\
    &= \bm v^\topnew \bigl( \bm A^\topnew \bm A \bm u - \bm B \bm A^\topnew (\bm A \bm x - \bm b) \bigr) ,
\end{align}
where the last equality stems from $\bm v \in T_{\bm x} \C$ and hence, $\bm v = \bm Z \bm Z^\topnew \bm v$.
In the case $\bm x = \bm x^*$ is a stationary point of (\ref{prob:umls}) with the Lagrange multiplier $\bm \gamma$, one can substituting (\ref{equ:KKT1}) into (\ref{equ:hess_f_uv}) to obtain
\begin{align} \label{equ:hess_f_uv2}
    \text{Hess} f_{\bm x} (\bm u, \bm v) &= \bm v^\topnew \bigl( \bm A^\topnew \bm A \bm u - \bm B (\diag(\bm \gamma) \otimes \bm I_2) \bm x \bigr) .
\end{align}
Notice that $\bm x = \sum_{i=1}^N \bm e_i \otimes \bm S_i(\bm x)$ and $\bigl(\bm S_i(\bm u) \bigr)^\topnew \bm S_i(\bm x) = 0$ for all $i=1,\ldots,N$. Therefore, the second term on the RHS of (\ref{equ:hess_f_uv2}) can be simplified as
\begin{align*}
    \bm B (\diag(\bm \gamma) \otimes \bm I_2) \bm x &= \sum_{i=1}^N \gamma_i \bm e_i \otimes \bm S_i(\bm u) \\
    &=(\diag(\bm \gamma) \otimes \bm I_2) \bm u .
\end{align*}
Substituting back into (\ref{equ:hess_f_uv2}) and reorganizing terms, we obtain the Riemannian Hessian as 
\begin{align} \label{equ:hess_f_uv_final}
    \text{Hess} f_{\bm x} (\bm u, \bm v) &= \bm u^\topnew \bigl( \bm A^\topnew \bm A - (\diag(\bm \gamma) \otimes \bm I_2) \bigr) \bm v .
\end{align}
Finally, it follows from (\ref{def:tangent}) that there is an one-to-one correspondence between the tangent space $T_{\bm x}\C$ and $\R^N$, i.e., $\bm u = \bm Z \tilde{\bm u}$ and $\bm v = \bm Z \tilde{\bm v}$ for $\tilde{\bm u}, \tilde{\bm v} \in \R^N$. 
Hence, we can define a bilinear function $H: \R^N \otimes \R^N \to \R$:
\begin{align*}
    H(\tilde{\bm u},\tilde{\bm v}) &\triangleq \text{Hess} f_{\bm x} (\bm u, \bm v) \\
    &= (\bm Z \tilde{\bm u})^\topnew \bigl( \bm A^\topnew \bm A - (\diag(\bm \gamma) \otimes \bm I_2) \bigr) (\bm Z \tilde{\bm v}) \\
    &= \tilde{\bm u}^\topnew (\bm Z^\topnew \bm A^\topnew \bm A \bm Z - \diag(\bm \gamma)) \tilde{\bm v} ,
\end{align*}
where the last equality stems from $\bm Z^\topnew \bm Z = \bm I_N$.
In other words, $\text{Hess} f_{\bm x}$ admits a compact matrix representation
\begin{align*}
    \bm H = \bm Z^\topnew \bm A^\topnew \bm A \bm Z - \diag(\bm \gamma) .
\end{align*}

\section{Proof of Lemma~\ref{lem:fixed}}
\label{appdx:fixed}

($\Rightarrow$) Assume $\bm x^*$ is a fixed point of Algorithm~\ref{algo:PGD} with step size $\eta>0$, i.e., 
\begin{align} \label{equ:xPcr}
    \bm x^* = \P_\C(\bm x^* - \eta \bm r) ,
\end{align}
where $\bm r = \bm A^\topnew (\bm A \bm x^* - \bm b)$.
We will show there exists $\bm \gamma \in \R^N$ such that for all $i=1,\ldots,N$, 
\begin{align} \label{equ:Sig}
    \bm S_i (\bm r) = \gamma_i \bm S_i (\bm x^*)
\end{align}
and
\begin{align} \label{equ:gamma_s}
    \begin{cases}
        \gamma_i < 1/\eta &\text{ if } \bm S_i(\bm x^*) \neq \bm s , \\
        \gamma_i \leq 1/\eta &\text{ if } \bm S_i(\bm x^*) = \bm s ,
    \end{cases}
\end{align}
where we recall that $\bm s = [1,0]^\topnew$.

For $i=1,\ldots,N$, applying the 2-selection operator $\bm S_i(\cdot)$ to both side of (\ref{equ:xPcr}) and substituting the RHS by the definition of $\P_\C$ in (\ref{equ:Pc}) yield
\begin{align} \label{equ:Sxs}
    \bm S_i (\bm x^*) = \begin{cases}
        \frac{\bm S_i (\bm x^* - \eta \bm r)}{\norm{\bm S_i (\bm x^* - \eta \bm r)}} &\text{ if } \bm S_i (\bm x^* - \eta \bm r) \neq \bm 0_2 , \\
        \bm s &\text{ if } \bm S_i (\bm x^* - \eta \bm r) = \bm 0_2 .
    \end{cases}
\end{align}
If $\bm S_i(\bm x^*) \neq \bm s$, then (\ref{equ:Sxs}) implies
\begin{align*}
    \bm S_i (\bm x^*) = \frac{\bm S_i (\bm x^* - \eta \bm r)}{\norm{\bm S_i (\bm x^* - \eta \bm r)}} = \frac{\bm S_i (\bm x^*) - \eta \bm S_i (\bm r)}{\norm{\bm S_i (\bm x^* - \eta \bm r)}} ,
\end{align*}
which in turns can be reorganized as $\bm S_i (\bm r) = \gamma_i \bm S_i (\bm x^*)$ for
\begin{align} \label{equ:gamma_s_le}
    \gamma_i = \frac{1-\norm{\bm S_i (\bm x^*) - \eta \bm S_i(\bm r)}}{\eta} < \frac{1}{\eta} .
\end{align}
If $\bm S_i(\bm x^*) = \bm s$, we consider two sub-cases:
\begin{enumerate}[leftmargin=*]
    \item If $\bm S_i (\bm x^* - \eta \bm r) \neq \bm 0_2$, then by the same argument as the previous case, we obtain (\ref{equ:gamma_s_le}).
    \item If $\bm S_i (\bm x^* - \eta \bm r) = \bm 0_2$, then using the linearity of $\bm S_i$, we have $\bm S_i (\bm r) = \gamma_i \bm S_i(\bm x^*)$ where $\gamma_i = 1/\eta$.
\end{enumerate}
In all cases, we have (\ref{equ:Sig}) and (\ref{equ:gamma_s}) hold. Finally, we note that the stationarity condition (\ref{equ:KKT1}) is equivalent to $\bm S_i (\bm r) = \gamma_i \bm S_i (\bm x^*)$ for all $i=1,\ldots,N$.

($\Leftarrow$) Assume $\bm x^*$ is a stationary point of (\ref{prob:umls}) (i.e., (\ref{equ:Sig} holds for all $i=1,\ldots,N$) with the corresponding Lagrange multiplier $\bm \gamma$ satisfying (\ref{equ:gamma_s}) for all $i=1,\ldots,N$. We will prove (\ref{equ:xPcr}) by showing that
\begin{align} \label{equ:SiPcx}
    \bm S_i \bigl( \P_\C (\bm x^* - \eta \bm r) \bigr) = \bm S_i(\bm x^*) ,
\end{align}
for any $i=1,\ldots,N$.

By the definition of $\P_\C$ in (\ref{equ:Pc}), we have
\begin{align} \label{equ:SiPc_2}
    \bm S_i \bigl( \P_\C (\bm x^* - \eta \bm r) \bigr) = \begin{cases}
        \frac{\bm S_i (\bm x^* - \eta \bm r)}{\norm{\bm S_i (\bm x^* - \eta \bm r)}} &\text{ if } \bm S_i (\bm x^* - \eta \bm r) \neq \bm 0_2 , \\
        \bm s &\text{ if } \bm S_i (\bm x^* - \eta \bm r) = \bm 0_2 .
    \end{cases}
\end{align}
Using the linearity of $\bm S_i(\cdot)$ and then the stationarity condition in (\ref{equ:Sig}) yield
\begin{align*} 
    \bm S_i(\bm x^* - \eta \bm r) &= \bm S_i(\bm x^*) - \eta \bm S_i (\bm r) \\
    &= \bm S_i(\bm x^*) - \eta \gamma_i \bm S_i(\bm x^*) = (1-\eta \gamma_i) \bm S_i(\bm x^*) . \numberthis \label{equ:Si_gammai}
\end{align*}
Since $\bm x \in \C$, $\norm{\bm S_i(\bm x^*)}=1$.
Taking the norm of both sides in (\ref{equ:Si_gammai}) and using (\ref{equ:gamma_s}) to remove the absolute value, we obtain
\begin{align*}
    \norm{\bm S_i(\bm x^* - \eta \bm r)} &= \norm{(1-\eta \gamma_i) \bm S_i(\bm x^*)} \\
    &= \abs{1-\eta \gamma} \norm{\bm S_i(\bm x^*)} = 1-\eta \gamma .
\end{align*}
Therefore, (\ref{equ:SiPc_2}) is equivalent to
\begin{align*} 
    \bm S_i \bigl( \P_\C (\bm x^* - \eta \bm r) \bigr) &= \begin{cases}
        \bm S_i(\bm x^*) &\text{ if } 1-\eta \gamma_i \neq 0 , \\
        \bm s &\text{ if } 1-\eta \gamma_i = 0 .
    \end{cases} \numberthis \label{equ:SiPc}
\end{align*}
    
\begin{itemize}[leftmargin=*]
    \item If $1-\eta \gamma_i \neq 0$, then (\ref{equ:SiPcx}) holds trivially.
    
    \item If $1-\eta \gamma_i = 0$, then $\bm S_i ( \P_\C (\bm x^* - \eta \bm r)) = \bm s$ and $\gamma_i = 1/\eta$. From (\ref{equ:gamma_s}), the latter only holds if $\bm S_i(\bm x^*) = \bm s$. Thus, we obtain $\bm S_i ( \P_\C (\bm x^* - \eta \bm r)) = \bm S_i(\bm x^*) = \bm s$.
\end{itemize}
In both case, we have (\ref{equ:SiPcx}) holds for all $i=1,\ldots,N$.
This completes our proof of the lemma.

\section{Proof of Proposition~\ref{prop:dPc}}
\label{appdx:dPc}

The proof of this lemma is based on the following result for the projection onto the unit sphere \cite{vu2021asymptotic}:
\begin{lemma} \label{lem:proj_unit}
(Rephrased from Lemma~5 in \cite{vu2021asymptotic})
Let $\bm x$ be a point on the unit sphere $\S^{n-1}$. Then, for any $\bm \delta \in \R^n$, the projection onto $\S^{n-1}$ satisfies
\begin{align} \label{equ:taylor_sphere}
  \P_{\S^{n-1}}(\bm x + \bm \delta) = \bm x + \bigl( \bm I - \bm x \bm x^\topnew \bigr) \bm \delta + \bm q_{\S^{n-1}} (\bm \delta) ,
\end{align}
where $\norm{\bm q_{\S^{n-1}}(\bm \delta)} \leq 2 \norm{\bm \delta}^2$.
\end{lemma}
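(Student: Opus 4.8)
The plan is to use the fact that projection onto the unit sphere is simply normalization, $\P_{\S^{n-1}}(\bm z) = \bm z / \norm{\bm z}$ for $\bm z \neq \bm 0_n$. Setting $\bm y = \bm x + \bm \delta$ and using $\norm{\bm x} = 1$, I would first write the residual in closed form. Since $\bm \delta - (\bm I - \bm x \bm x^\topnew)\bm\delta = (\bm x^\topnew \bm\delta)\bm x$, a short rearrangement gives
\begin{align*}
    \bm q_{\S^{n-1}}(\bm\delta) = \P_{\S^{n-1}}(\bm x + \bm\delta) - \bm x - (\bm I - \bm x\bm x^\topnew)\bm\delta = \Bigl(\tfrac{1}{\norm{\bm y}} - 1\Bigr)\bm y + (\bm x^\topnew\bm\delta)\,\bm x .
\end{align*}
This identity makes the linear term transparent: $\bm I - \bm x\bm x^\topnew$ is exactly the Jacobian of the normalization map at $\bm x$, so $\bm q_{\S^{n-1}}$ is genuinely the second-order remainder. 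The degenerate case $\bm y = \bm 0_n$ (that is, $\bm\delta = -\bm x$) is handled separately: there $\norm{\bm\delta} = 1$, and $\norm{\bm q_{\S^{n-1}}(\bm\delta)} \le \norm{\P_{\S^{n-1}}(\bm 0_n)} + \norm{\bm x} = 2 = 2\norm{\bm\delta}^2$, so the claimed bound holds regardless of the projection convention chosen at the origin.

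For $\bm y \neq \bm 0_n$, I would reduce the vector estimate to a scalar one by introducing polar coordinates relative to $\bm x$: let $r = \norm{\bm y}$ and let $\theta$ be the angle between $\bm y$ and $\bm x$, so that $\bm x^\topnew \bm y = r\cos\theta$. Decomposing $\bm q_{\S^{n-1}}$ into its component along $\bm x$ and its component along the orthogonal part of $\bm\delta$, and using the identity $1 + \bm x^\topnew\bm\delta - r = -\norm{(\bm I - \bm x \bm x^\topnew)\bm\delta}^2 / (1 + \bm x^\topnew\bm\delta + r)$ to expose the second-order nature of the radial part, I obtain
\begin{align*}
    \norm{\bm q_{\S^{n-1}}(\bm\delta)}^2 = (1 - \cos\theta)^2 + (1-r)^2\sin^2\theta , \qquad \norm{\bm\delta}^2 = r^2 - 2r\cos\theta + 1 .
\end{align*}
The target inequality $\norm{\bm q_{\S^{n-1}}(\bm\delta)}^2 \le 4\norm{\bm\delta}^4$ then becomes a purely algebraic statement in the two real variables $r \ge 0$ and $c = \cos\theta \in [-1,1]$.

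To finish, I would substitute $p = 1 - \cos\theta \in [0,2]$ and $D = (1-r)^2$, turning the inequality into the nonnegativity on $[0,2]$ of a quadratic $g(p)$ whose coefficients are rational in $r$. The endpoints are nonnegative by inspection, since $g(0) = 4D^2 \ge 0$ and $g(2) = 4(1+r)^4 \ge 0$; the interior is then controlled by the sign of the leading coefficient together with the discriminant, both elementary functions of $r$. I expect the main obstacle to be precisely this global verification near the antipode: when $\bm y$ approaches $\bm 0_n$ the projection becomes singular and $\norm{\bm q_{\S^{n-1}}(\bm\delta)}^2$ tends to its maximal value $4$, while simultaneously $\norm{\bm\delta} \to 1$, so the bound is tight and no crude triangle-inequality estimate suffices in this regime. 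Treating this boundary behavior carefully—rather than the routine small-$\bm\delta$ Taylor estimate, which is immediate—is where the real work lies.
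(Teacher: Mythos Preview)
The paper does not actually prove this lemma: it is quoted verbatim from \cite{vu2021asymptotic} and then used as a black box in the proof of Proposition~\ref{prop:dPc}. So there is no ``paper's own proof'' to compare against; your proposal is a self-contained argument where the paper offers none.

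Your reduction is correct. The closed form $\bm q_{\S^{n-1}}(\bm\delta) = (1/\norm{\bm y}-1)\bm y + (\bm x^\topnew\bm\delta)\bm x$ is right, the degenerate case $\bm y=\bm 0_n$ is handled properly, and the polar decomposition giving $\norm{\bm q_{\S^{n-1}}(\bm\delta)}^2 = (1-\cos\theta)^2 + (1-r)^2\sin^2\theta$ and $\norm{\bm\delta}^2 = r^2 - 2r\cos\theta + 1$ checks out (project $\bm q_{\S^{n-1}}$ onto $\bm x$ to get $\cos\theta-1$, and onto $\bm x^\perp$ to get a vector of norm $\abs{1-r}\sin\theta$). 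One arithmetic slip: with $p=1-\cos\theta$ and $D=(1-r)^2$, the function $g(p)=4(D+2rp)^2 - p^2 - Dp(2-p)$ satisfies $g(2)=4(1+r)^4-4$, not $4(1+r)^4$; this is still nonnegative for $r\ge 0$, so your endpoint check survives.

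The plan to finish via the sign of the leading coefficient and the discriminant is viable but needs more care than you indicate. Expanding gives $g(p)=(17r^2-2r)p^2 + 2(1-r)^2(8r-1)p + 4(1-r)^4$, and the leading coefficient changes sign at $r=2/17$. For $0<r\le 2/17$ the parabola is concave, so endpoint nonnegativity suffices. For $r>2/17$ it is convex and you must locate the vertex; when $r>1/8$ the vertex lies at $p^*<0$, and for $2/17<r\le 1/8$ you should check that either $p^*>2$ or the minimum value $(1-r)^4\,(4r^2+8r-1)/[r(17r-2)]$ is nonnegative (the numerator $4r^2+8r-1$ vanishes at $r=(\sqrt{5}-2)/2\approx 0.118$, very close to $2/17\approx 0.1176$, so the two sub-regimes interlock delicately). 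None of this is deep, but it is exactly the ``boundary behavior'' you flagged, and it will not fall out of a one-line discriminant check; you should carry it through explicitly.
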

\noindent Applying Lemma~\ref{equ:taylor_sphere} to the unit sphere $\S^1$ (i.e., $n=2$), we have, for each $i=1,\ldots,N$,
\begin{align*}
    \bm S_i \bigl( &\P_\C (\bm x + \bm \delta) \bigr) = \P_{\S^1} \bigl( \bm S_i (\bm x + \bm \delta) \bigr) \\
    &= \P_{\S^1} \bigl( \bm S_i (\bm x) + \bm S_i(\bm \delta) \bigr) \\
    &= \bm S_i (\bm x) + \bigl( \bm I_2 - \bm S_i (\bm x) (\bm S_i (\bm x))^\topnew \bigr) \bm S_i(\bm \delta) + \bm q_{\S^{1}}\bigl(\bm S_i(\bm \delta)\bigr) \\
    &= \bm S_i (\bm x) + \bm v_i \bm v_i^\topnew \bm S_i(\bm \delta) + \bm q_{\S^{1}}\bigl(\bm S_i(\bm \delta)\bigr) ,
\end{align*}
where $\bm v_i = [-x_{2i},x_{2i-1}]^\topnew$. Using the property of the 2-selection operator in (\ref{equ:e_S}), we further have
\begin{align*}
    \P_\C (\bm x + \bm \delta) &= \sum_{i=1}^N \bm e_i \otimes \bm S_i \bigl( \P_\C (\bm x + \bm \delta) \bigr) \\
    &= \sum_{i=1}^N \bm e_i \otimes \Bigl( \bm S_i (\bm x) + \bm v_i \bm v_i^\topnew \bm S_i(\bm \delta) + \bm q_{\S^{1}}\bigl(\bm S_i(\bm \delta)\bigr) \Bigr) \\
    &= \sum_{i=1}^N \bm e_i \otimes \bm S_i (\bm x) + \sum_{i=1}^N (\bm e_i \otimes \bm v_i \bm v_i^\topnew) \bm S_i(\bm \delta) \\
    &\qquad + \sum_{i=1}^N \bm e_i \otimes \bm q_{\S^{1}}\bigl(\bm S_i(\bm \delta)\bigr) \\
    &= \bm x + \bm Z \bm Z^\topnew \bm \delta + \bm q(\bm \delta) , \numberthis \label{equ:Zv}
\end{align*}
where $\bm q(\bm \delta)$ satisfies $\bm S_i(\bm q(\bm \delta)) = \bm q_{\S^{1}}(\bm S_i(\bm \delta))$ and
\begin{align*}
    \norm{\bm q (\bm \delta)}^2 &= \sum_{i=1}^N \norm{\bm S_i(\bm q(\bm \delta))}^2 = \sum_{i=1}^N \norm{\bm q_{\S^1} (\bm S_i (\bm \delta))}^2 \\
    &\leq \sum_{i=1}^N \bigl(2 \norm{\bm S_i (\bm \delta)}^2\bigr)^2 \leq \bigl(\sum_{i=1}^N 2 \norm{\bm S_i (\bm \delta)}^2\bigr)^2 \\
    &= 4 \bigl(\sum_{i=1}^N (\delta_{2i-1}^2+\delta_{2i}^2) \bigr)^2 = 4 \bigl(\sum_{j=1}^{2N} \delta_j^2 \bigr)^2 = 4 \norm{\bm \delta}^4 .
\end{align*}
This completes our proof of the lemma.

\section{More Details on Condition (C3')}
\label{appdx:C3}

In this section, we show that when Conditions (C1) and (C2) in Theorem~\ref{theo:rate} hold, Condition (C3'), i.e., \begin{align} \label{equ:C3_gamma}
    \eta (\lambda_1(\bm H) + 2 \gamma_i) < 2 ,
\end{align}
for all $i=1,\ldots,N$, is sufficient for Condition (C3). First, we prove that $\bm D_\eta = (\bm I_N - \eta \diag(\bm \gamma))^{-1}$ is PSD. Second, we show that all the eigenvalues of $\bm D_\eta \bm H$ lie between $0$ and $(1-\eta \gamma_i)^{-1} \lambda_1(\bm H)$ (exclusively). Third, we claim that the spectral radius of $\bm M_\eta = \bm I_N - \eta \bm D_\eta \bm H$ is strictly less than $1$.

In the first step, rearranging (\ref{equ:C3_gamma}), we obtain $\eta \lambda_1(\bm H) /2 < 1-\eta \gamma_i$. By Condition (C2), we have $\lambda_1(\bm H)>0$. Since $\eta>0$, it follows that $0 < \eta \lambda_1(\bm H) /2 < 1-\eta \gamma_i$. Thus, the diagonal matrix $\bm D_\eta$ has all positive entries and hence, is a PSD matrix. 
In the second step, we use the inequalities for the eigenvalues of the product of two PSD matrices in \cite{wang1992some} to obtain
\begin{align} \label{equ:wang}
    \lambda_i(\bm D_\eta) \lambda_N(\bm H) \leq \lambda_i(\bm D_\eta \bm H) \leq \lambda_i(\bm D_\eta) \lambda_1(\bm H) ,
\end{align}
for all $i=1,\ldots,N$.
Since both $\bm D_\eta$ and $\bm H$ are PSD, we can lower bound the eigenvalues of $\bm D_\eta \bm H$ by $\lambda_i(\bm D_\eta \bm H) \geq \lambda_i(\bm D_\eta) \lambda_N(\bm H) > 0$. On the other hand, substituting $\lambda_i(\bm D_\eta) = (1-\eta \gamma_i)^{-1}$ into the upper bound in (\ref{equ:wang}) yields $\lambda_i(\bm D_\eta \bm H) \leq (1-\eta \gamma_i)^{-1} \lambda_1(\bm H)$.
Finally, using the fact that $\lambda_i(\bm M_\eta) = 1 - \eta \lambda_i(\bm D_\eta \bm H)$ and $0 < \lambda_i(\bm D_\eta \bm H) \leq (1-\eta \gamma_i)^{-1} \lambda_1(\bm H)$, for all $i=1,\ldots,N$, we obtain
\begin{align*}
    1 - \frac{\eta}{1-\eta \gamma_i} \lambda_1(\bm H) \leq \lambda_i(\bm M_\eta) < 1 .
\end{align*}
Now, rearranging (\ref{equ:C3_gamma}) to obtain $ 1 - \frac{\eta}{1-\eta \gamma_i} \lambda_1(\bm H) > -1$, we have all the eigenvalues of $\bm M_\eta$ lie between $-1$ and $1$ (exclusively). Since the spectral radius is the maximum of the absolute values of these eigenvalues, we conclude that $\rho(\bm M_\eta)<1$. This completes our proof in this section.

\section{Proof of Lemma~\ref{lem:PD_fixed}}
\label{appdx:PD_fixed}

In the first part of this proof, we show that $\gamma_i<1/\eta$ for all $i=1,\ldots,N$.
From Condition (C2), we have $\bm D_\eta = (\bm I_N - \eta \diag(\bm \gamma))^{-1}$ is invertible and hence, the expression of $\bm M_\eta$ in (\ref{def:Mn}) is well-defined. 
In addition, from Condition (C1), $\bm H$ has a unique PD square root $\bm H^{1/2}$, with the inverse $\bm H^{-1/2}$. 
Thus, we have
\begin{align*}
    \bm H^{1/2} &\bm M_\eta \bm H^{-1/2} \\
    &= \bm H^{1/2} \biggl( \bm I_N - \eta \Bigl(\bm I_N - \eta \diag(\bm \gamma) \Bigr)^{-1} \bm H \biggr) \bm H^{-1/2} \\
    &= \bm I_N - \eta \bm H^{1/2} \bm D_\eta \bm H^{1/2} \triangleq \tilde{\bm M}_\eta .
\end{align*}
This shows that $\bm M_\eta$ and $\tilde{\bm M}_\eta$ are similar matrices with the same set of eigenvalues. Combining this with Condition (C3), we obtain $\rho(\bm M_\eta)= \rho(\tilde{\bm M}_\eta)<1$. 
Since $\tilde{\bm M}_\eta$ is symmetric, it then holds that
\begin{align*}
    \tilde{\bm M}_\eta = \bm I_N - \eta \bm H^{1/2} \bm D_\eta \bm H^{-1/2} \prec \bm I_N ,
\end{align*}
which in turn yields $\bm H^{1/2} \bm D_\eta \bm H^{1/2} \succ \bm 0_N$.
By the definition of PD matrices, for any vector $\bm u \in \R^N$, it holds that $\bm u^\topnew \bm H^{1/2} \bm D_\eta \bm H^{1/2} \bm u > 0$. Alternatively, we can write $\bm v^\topnew \bm D_\eta \bm v > 0$, where $\bm v = \bm H^{1/2} \bm u$. Notice that the mapping between $\bm u$ and $\bm v$ is bijection, which means $\bm v^\topnew \bm D_\eta \bm v > 0$ also holds for any $\bm v \in \R^N$. Consequently, $\bm D_\eta = \diag([(1-\eta \gamma_1)^{-1},\ldots,(1-\eta \gamma_N)^{-1}])$ must be a PD matrix. Equivalently, we have $\gamma_i<1/\eta$ for all $i=1,\ldots,N$.

For the second part of the proof, we note that $\gamma_i<1/\eta$, for all $i=1,\ldots,N$, are sufficient conditions for the Lagrange multiplier condition (\ref{equ:fixed}) in Lemma~\ref{lem:fixed}.
Since a strict local minimum is also a stationary point of (\ref{prob:umls}), $\bm x^*$ must be a fixed point of Algorithm~\ref{algo:PGD} with the given step size $\eta$.
This completes our proof of the lemma.

\section{Proof of Lemma~\ref{lem:delta1}}
\label{appdx:delta1}

Using the PGD update in (\ref{equ:pgd}) and rewriting $\bm x^{(k)} = \bm x^* + \bm \delta^{(k)})$, we derive a recursion on the error vector as follows 
\begin{align*}
    &\bm \delta^{(k+1)} = \bm x^{(k+1)} - \bm x^* \\
    &= \P_\C \Bigl( \bm x^{(k)} - \eta \bm A^\topnew \bigl( \bm A \bm x^{(k)} - \bm b \bigr) \Bigr) - \bm x^* \\
    &= \P_\C \Bigl( (\bm x^* + \bm \delta^{(k)}) - \eta \bm A^\topnew \bigl( \bm A (\bm x^* + \bm \delta^{(k)}) - \bm b \bigr) \Bigr) - \bm x^* \\
    &= \P_\C \Bigl( \bigl(\bm x^* - \eta \bm A^\topnew (\bm A \bm x^* - \bm b) \bigr) + (\bm I_{2N} - \eta \bm A^\topnew \bm A) \bm \delta^{(k)} \Bigr) - \bm x^* . \numberthis \label{equ:delta_k1_Pc}
\end{align*} 
Since $\bm x^*$ is a stationary point of (\ref{prob:umls}), we have $\bm A^\topnew (\bm A \bm x^* - \bm b) = (\diag(\bm \gamma) \otimes \bm I_2) \bm x^*$. Then, the first term inside the projection $\P_\C$ on the RHS of (\ref{equ:delta_k1_Pc}) can be represented as
\begin{align*}
    \bm x^* - \eta \bm A^\topnew (\bm A \bm x^* - \bm b) &= \bigl( \bm I_{2N} - \eta \diag(\bm \gamma) \otimes \bm I_2 \bigr) \bm x^* \\
    &= \Bigl( \bigl( \bm I_N - \eta \diag(\bm \gamma) \bigr) \otimes \bm I_2 \Bigr) \bm x^* \\
    &= (\bm D_\eta^{-1} \otimes \bm I_2) \bm x^* = (\bm D_\eta \otimes \bm I_2)^{-1} \bm x^* .
\end{align*}
where we recall that $\bm D_\eta = (\bm I_N - \eta \diag(\bm \gamma) )^{-1} \succ \bm 0_N$ by Lemma~\ref{lem:PD_fixed}. 
Thus, we rewrite (\ref{equ:delta_k1_Pc}) as
\begin{align*}
    \bm \delta^{(k+1)} &= \P_\C \Bigl( (\bm D_\eta \otimes \bm I_2)^{-1} \bm x^* +  (\bm I_{2N} - \eta \bm A^\topnew \bm A) \bm \delta^{(k)} \Bigr) - \bm x^* .
\end{align*}
Now let $\bm y = \bm x^* + (\bm D_\eta \otimes \bm I_2) (\bm I_{2N} - \eta \bm A^\topnew \bm A) \bm \delta^{(k)}$ and using the modulus scale-invariant property of the projection $\P_\C((\bm D_\eta \otimes \bm I_2)^{-1} \bm y) = \P_\C(\bm y)$, for $\bm D_\eta \succ \bm 0_N$, we further obtain
\begin{align} \label{equ:delta_Pc_x}
    \bm \delta^{(k+1)} &= \P_\C \Bigl( \bm x^* + (\bm D_\eta \otimes \bm I_2) (\bm I_{2N} - \eta \bm A^\topnew \bm A) \bm \delta^{(k)} \Bigr) - \bm x^* .
\end{align}
Finally, applying Proposition~\ref{prop:dPc} with the perturbation $\bm \delta = (\bm D_\eta \otimes \bm I_2) (\bm I_{2N} - \eta \bm A^\topnew \bm A) \bm \delta^{(k)}$ at $\bm x = \bm x^* \in \C$, we have
\begin{align*}
    \P_\C \Bigl( \bm x^* + &(\bm D_\eta \otimes \bm I_2) (\bm I_{2N} - \eta \bm A^\topnew \bm A) \bm \delta^{(k)} \Bigr) \\
    &= \bm x^* + \bm Z\bm Z^\topnew (\bm D_\eta \otimes \bm I_2) (\bm I_{2N} - \eta \bm A^\topnew \bm A) \bm \delta^{(k)} \\
    &\qquad \qquad + \bm q \bigl( (\bm D_\eta \otimes \bm I_2) (\bm I_{2N} - \eta \bm A^\topnew \bm A) \bm \delta^{(k)} \bigr)
\end{align*}
Substituting this back into (\ref{equ:delta_Pc_x}) yields (\ref{equ:delta1}). This completes the proof of the lemma.

\section{Proof of Lemma~\ref{lem:delta2}}
\label{appdx:delta2}

Since $\bm x^{(k)}$ lies in $\C$, we can represent the error vector as
\begin{align*}
    \bm \delta^{(k)} &= \bm x^{(k)} - \bm x^* \\
    &= \P_\C (\bm x^{(k)}) - \bm x^* \\
    &= \P_\C (\bm x^* + \bm \delta^{(k)}) - \bm x^* . \numberthis \label{equ:delta_Pc}
\end{align*}
Using Proposition~\ref{prop:dPc}, we have
\begin{align*}
    \P_\C (\bm x^* + \bm \delta^{(k)}) = \bm x^* + \bm Z \bm Z^\topnew \bm \delta^{(k)} + \bm q (\bm \delta^{(k)}) .
\end{align*}
Substituting this back into the RHS of (\ref{equ:delta_Pc}) yields
\begin{align*}
    \bm \delta^{(k)} = \bm Z \bm Z^\topnew \bm \delta^{(k)} + \bm q (\bm \delta^{(k)}) .
\end{align*}
This completes our proof of the lemma.

\section{Proof of Lemma~\ref{lem:delta3}}
\label{appdx:delta3}

Substituting (\ref{equ:delta_k}) back into the first term on the RHS of (\ref{equ:delta1}), we have
\begin{align*}
    \bm \delta^{(k+1)} = &\bm Z\bm Z^\topnew (\bm D_\eta \otimes \bm I_2) (\bm I_{2N} - \eta \bm A^\topnew \bm A) \bm Z\bm Z^\topnew \bm \delta^{(k)} \\
    &+ \bm Z\bm Z^\topnew (\bm D_\eta \otimes \bm I_2) (\bm I_{2N} - \eta \bm A^\topnew \bm A) \bm q (\bm \delta^{(k)}) \\
    &+ \bm q \bigl( (\bm D_\eta \otimes \bm I_2) (\bm I_{2N} - \eta \bm A^\topnew \bm A) \bm \delta^{(k)} \bigr) . \numberthis \label{equ:delta_ZD}
\end{align*}

\noindent From Lemma~\ref{lem:ZD} and the fact that $\bm Z^\topnew \bm Z= \bm I_N$, we can represent (\ref{equ:delta_ZD}) as
\begin{align*}
    \bm \delta^{(k+1)} &= \bm Z \bm D_\eta \bm Z^\topnew (\bm I_{2N} - \eta \bm A^\topnew \bm A) \bm Z\bm Z^\topnew \bm \delta^{(k)} + \hat{\bm q}(\bm \delta^{(k)}) \\
    &= \bm Z \bm D_\eta (\bm I_N - \eta \bm Z^\topnew \bm A^\topnew \bm A \bm Z ) \bm Z^\topnew \bm \delta^{(k)} + \hat{\bm q}(\bm \delta^{(k)}) , \numberthis \label{equ:delta_ZDA}
\end{align*}
where $\hat{\bm q}(\bm \delta) = \bm Z\bm Z^\topnew (\bm D_\eta \otimes \bm I_2) (\bm I_{2N} - \eta \bm A^\topnew \bm A) \bm q (\bm \delta) + \bm q \bigl( (\bm D_\eta \otimes \bm I_2) (\bm I_{2N} - \eta \bm A^\topnew \bm A) \bm \delta \bigr)$.
Recall that $\bm H = \bm Z^\topnew \bm A^\topnew \bm A \bm Z - \diag(\bm \gamma)$. Thus, (\ref{equ:delta_ZDA}) is equivalent to
\begin{align*}
    \bm \delta^{(k+1)} &= \bm Z \bm D_\eta (\bm I_N - \eta \diag(\bm \gamma) - \bm H) \bm Z^\topnew \bm \delta^{(k)} + \hat{\bm q}(\bm \delta^{(k)}) \\
    &= \bm Z (\bm I_N - \eta \bm D_\eta \bm H) \bm Z^\topnew \bm \delta^{(k)} + \hat{\bm q}(\bm \delta^{(k)}) .
\end{align*}
By the definition of $\bm M_\eta$ in (\ref{def:Mn}), the last equation is the same as (\ref{equ:H_delta}).

To bound the norm of $\hat{\bm q}(\bm \delta^{(k)})$, we use the triangle inequality and the product norm inequality as follows
\begin{align*}
    \norm{\hat{\bm q}(\bm \delta)} &\leq \norm{\bm Z\bm Z^\topnew (\bm D_\eta \otimes \bm I_2) (\bm I_{2N} - \eta \bm A^\topnew \bm A) \bm q (\bm \delta)} \\
    &\quad + \norm{\bm q \bigl( (\bm D_\eta \otimes \bm I_2) (\bm I_{2N} - \eta \bm A^\topnew \bm A) \bm \delta \bigr)} \\
    &\leq \norm{\bm Z\bm Z^\topnew}_2 \norm{(\bm D_\eta \otimes \bm I_2) (\bm I_{2N} - \eta \bm A^\topnew \bm A)}_2 \norm{\bm q (\bm \delta)} \\
    &\quad + \norm{\bm q \bigl( (\bm D_\eta \otimes \bm I_2) (\bm I_{2N} - \eta \bm A^\topnew \bm A) \bm \delta \bigr)} .
\end{align*}
Since $\norm{\bm q(\bm \delta)} \leq 2 \norm{\bm \delta}$ (see Proposition~\ref{prop:dPc}) and $c_\eta = \norm{(\bm D_\eta \otimes \bm I_2) (\bm I_{2N} - \eta \bm A^\topnew \bm A)}_2$, we further obtain
\begin{align*}
    \norm{\hat{\bm q}(\bm \delta)} &\leq \norm{\bm Z\bm Z^\topnew}_2 \cdot c_\eta \cdot 2 \norm{\bm \delta}^2 \\
    &\qquad + 2 \norm{(\bm D_\eta \otimes \bm I_2) (\bm I_{2N} - \eta \bm A^\topnew \bm A) \bm \delta}^2 \\
    &\leq 2 c_\eta \norm{\bm Z\bm Z^\topnew}_2 \norm{\bm \delta}^2 + 2 c_\eta^2 \norm{\bm \delta}^2 \\
    &\leq 2 c_\eta \norm{\bm \delta}^2 +  2 c_\eta^2 \norm{\bm \delta}^2 ,
\end{align*}
where the last inequality stems from $\norm{\bm Z\bm Z^\topnew}_2 \leq 1$ since $\bm Z\bm Z^\topnew$ is an orthogonal projection matrix.
This completes our proof of the lemma.

\section{Proof of Lemma~\ref{lem:rho2}}
\label{appdx:rho2}

The proof in this section relies on Lemmas~\ref{lem:delta3},~\ref{lem:scalar}, and \ref{lem:eig_Hn}.
Let $\tilde{\bm \delta}^{(k)} = (\bm D_\eta^{-1/2} \otimes \bm I_2) \bm \delta^{(k)}$. Left-multiplying both sides of (\ref{equ:H_delta}) with $(\bm D_\eta^{-1/2} \otimes \bm I_2)$, we have
\begin{align*} 
    \tilde{\bm \delta}^{(k+1)} &= (\bm D_\eta^{-1/2} \otimes \bm I_2) \bm Z \bm M_\eta \bm Z^\topnew \bm \delta^{(k)} + (\bm D_\eta^{-1/2} \otimes \bm I_2) \hat{\bm q}(\bm \delta^{(k)}) \\
    &= (\bm D_\eta^{-1/2} \otimes \bm I_2) \bm Z \bm M_\eta \bm Z^\topnew (\bm D_\eta^{1/2} \otimes \bm I_2) \tilde{\bm \delta}^{(k)} \\
    &\qquad + (\bm D_\eta^{-1/2} \otimes \bm I_2) \hat{\bm q}\bigl((\bm D_\eta^{1/2} \otimes \bm I_2) \tilde{\bm \delta}^{(k)} \bigr) . \numberthis \label{equ:DZHZ}
\end{align*}
Using Lemma~\ref{lem:ZD} and substituting $\bm M_\eta = \bm I_N - \eta \bm D_\eta^{-1} \bm H$ into the RHS of (\ref{equ:DZHZ}) yield
\begin{align*}
    \tilde{\bm \delta}^{(k+1)} &= \bm Z \bm D_\eta^{-1/2} (\bm I_N - \eta \bm D_\eta^{-1} \bm H) \bm D_\eta^{1/2} \bm Z^\topnew \tilde{\bm \delta}^{(k)} + \tilde{\bm q} (\tilde{\bm \delta}^{(k)}) \\
    &= \bm Z (\bm I_N - \eta \bm D_\eta^{-1/2} \bm H \bm D_\eta^{-1/2}) \bm Z^\topnew \tilde{\bm \delta}^{(k)} + \tilde{\bm q} (\tilde{\bm \delta}^{(k)}) , \numberthis \label{equ:ZDGDZ}
\end{align*}
where $\tilde{\bm q} (\tilde{\bm \delta}^{(k)}) = (\bm D_\eta^{-1/2} \otimes \bm I_2) \hat{\bm q}((\bm D_\eta^{1/2} \otimes \bm I_2) \tilde{\bm \delta}^{(k)})$ satisfies
\begin{align*}
    \norm{\tilde{\bm q} (\tilde{\bm \delta}^{(k)})} &\leq \norm{\bm D_\eta^{-1/2} \otimes \bm I_2}_2 \norm{\hat{\bm q}((\bm D_\eta^{1/2} \otimes \bm I_2) \tilde{\bm \delta}^{(k)})} \\
    &= \norm{\bm D_\eta^{-1/2}}_2 \norm{\hat{\bm q}\bigl((\bm D_\eta^{1/2} \otimes \bm I_2) \tilde{\bm \delta}^{(k)}\bigr)} \\
    &\leq \norm{\bm D_\eta^{-1/2}}_2 \cdot 2 c_\eta (c_\eta+1) \norm{(\bm D_\eta^{1/2} \otimes \bm I_2) \tilde{\bm \delta}^{(k)}}^2 \\
    &\leq 2 c_\eta (c_\eta+1) \norm{\bm D_\eta^{-1/2}}_2 \norm{\bm D_\eta^{1/2} \otimes \bm I_2}_2^2 \norm{\tilde{\bm \delta}^{(k)}}^2 \\
    &\leq 2 c_\eta (c_\eta+1) \norm{\bm D_\eta^{-1/2}}_2 \norm{\bm D_\eta^{1/2}}_2^2  \norm{\tilde{\bm \delta}^{(k)}}^2 \\
    &= 2 c_\eta (c_\eta+1) (1-\eta \underline{\gamma})^{1/2} (1-\eta \overline{\gamma})^{-1} \norm{\tilde{\bm \delta}^{(k)}}^2 ,
\end{align*}
where the last equality stems from $\norm{\bm D_\eta^{-1/2}}_2 = (1-\eta \underline{\gamma})^{1/2}$ and $\norm{\bm D_\eta^{1/2}}_2 = (1-\eta \overline{\gamma})^{-1/2}$.
Let $q = 2 c_\eta (c_\eta+1) (1-\eta \underline{\gamma})^{1/2} (1-\eta \overline{\gamma})^{-1}$.
Taking the norm of both sides of (\ref{equ:ZDGDZ}) and then using the triangle inequality on the RHS, we obtain
\begin{align*}
    &\norm{\tilde{\bm \delta}^{(k+1)}} = \norm{\bm Z (\bm I_N - \eta \bm D_\eta^{-1/2} \bm H \bm D_\eta^{-1/2}) \bm Z^\topnew \tilde{\bm \delta}^{(k)} + \tilde{\bm q} (\tilde{\bm \delta}^{(k)})} \\
    &\quad \leq \norm{\bm Z (\bm I_N - \eta \bm D_\eta^{-1/2} \bm H \bm D_\eta^{-1/2}) \bm Z^\topnew \tilde{\bm \delta}^{(k)}} + \norm{\tilde{\bm q} (\tilde{\bm \delta}^{(k)})} .
\end{align*}
Since $\bm Z (\bm I_N - \eta \bm D_\eta^{-1/2} \bm H \bm D_\eta^{-1/2}) \bm Z^\topnew$ is symmetric, its spectral norm equals to its spectral radius.
The last inequality can be rewritten as
\begin{align*}
    \norm{\tilde{\bm \delta}^{(k+1)}} \leq \rho \bigl(\bm Z (\bm I_N - \eta \bm D_\eta^{-1/2} \bm H &\bm D_\eta^{-1/2}) \bm Z^\topnew \bigr) \norm{\tilde{\bm \delta}^{(k)}} \\
    &+ q \norm{\tilde{\bm \delta}^{(k)}}^2 . \numberthis \label{equ:ZDGDZ_inequ}
\end{align*}
Moreover, it can be seen from (\ref{equ:ZDGDZ}) that
\begin{align*}
    \bm Z (\bm I_N &- \eta \bm D_\eta^{-1/2} \bm H \bm D_\eta^{-1/2}) \bm Z^\topnew \\
    &= (\bm D_\eta^{-1/2} \otimes \bm I_2) \bm Z \bm M_\eta \bm Z^\topnew \bigl(\bm D_\eta^{-1/2} \otimes \bm I_2\bigr)^{-1} ,
\end{align*}
which in turns implies the two matrices $\bm Z (\bm I_N - \eta \bm D_\eta^{-1/2} \bm H \bm D_\eta^{-1/2}) \bm Z^\topnew$ and $\bm Z \bm M_\eta \bm Z^\topnew$ are similar and have the same spectral radius. 
In particular, we have
\begin{align*}
    \rho \bigl(\bm Z (\bm I_N - \eta \bm D_\eta^{-1/2} \bm H \bm D_\eta^{-1/2}) \bm Z^\topnew \bigr) &= \rho (\bm Z \bm M_\eta \bm Z^\topnew) \\
    &= \rho(\bm M_\eta) ,
\end{align*}
where the second equality stems from Lemma~\ref{lem:eig_Hn}.
Thus, (\ref{equ:ZDGDZ_inequ}) can be represented as
\begin{align*}
    \norm{\tilde{\bm \delta}^{(k+1)}} &\leq \rho(\bm M_\eta) \norm{\tilde{\bm \delta}^{(k)}} + q \norm{\tilde{\bm \delta}^{(k)}}^2 .
\end{align*}
Applying Lemma~\ref{lem:scalar2} with $b_k = \norm{\tilde{\bm \delta}^{(k)}}$, $\rho=\rho(\bm M_\eta)$, and 
\begin{align*}
    c &= \frac{\rho(\bm M_\eta) \bigl(1-\rho(\bm M_\eta)\bigr)}{q} = (1-\eta \underline{\gamma})^{1/2} c_1(\bm x^*,\eta) ,
\end{align*}
it holds that if $\norm{\tilde{\bm \delta}^{(0)}} < c$, then
\begin{align} \label{equ:inequ_delta_tk}
    \norm{\tilde{\bm \delta}^{(k)}} \leq \biggl(1 - \frac{\norm{\tilde{\bm \delta}^{(0)}}}{c}\biggr)^{-1} \norm{\tilde{\bm \delta}^{(0)}} \rho^k(\bm M_\eta) .
\end{align}
Recall that $\bm \delta^{(k)} = (\bm D_\eta^{1/2} \otimes \bm I_2) \tilde{\bm \delta}^{(k)}$. On the one hand, the LHS of (\ref{equ:inequ_delta_tk}) can be lower-bounded by $(1-\eta \overline{\gamma})^{1/2} \norm{\bm \delta^{(k)}}$ since
\begin{align*}
    \norm{\bm \delta^{(k)}} &= \norm{(\bm D_\eta^{1/2} \otimes \bm I_2) \tilde{\bm \delta}^{(k)}} \leq \norm{\bm D_\eta^{1/2} \otimes \bm I_2}_2 \norm{\tilde{\bm \delta}^{(k)}} \\
    &= \norm{\bm D_\eta^{1/2}}_2 \norm{\tilde{\bm \delta}^{(k)}} = (1-\eta \overline{\gamma})^{-1/2} \norm{\tilde{\bm \delta}^{(k)}} .
\end{align*}
On the other hand, the RHS of (\ref{equ:inequ_delta_tk}) can be upper-bounded as follows. Since
\begin{align*}
    \norm{\tilde{\bm \delta}^{(0)}} &= \norm{(\bm D_\eta^{-1/2} \otimes \bm I_2) {\bm \delta}^{(0)}} \leq \norm{\bm D_\eta^{-1/2} \otimes \bm I_2}_2 \norm{{\bm \delta}^{(0)}} \\
    &= \norm{\bm D_\eta^{-1/2}}_2 \norm{{\bm \delta}^{(0)}} = (1-\eta \underline{\gamma})^{1/2} \norm{{\bm \delta}^{(0)}} , \numberthis \label{equ:delta_t_upperbound}
\end{align*}
we have
\begin{align*}
    \biggl(&1 - \frac{\norm{\tilde{\bm \delta}^{(0)}}}{c}\biggr)^{-1} \norm{\tilde{\bm \delta}^{(0)}} \rho^k(\bm M_\eta) \\
    &\leq \biggl(1 - \frac{(1-\eta \underline{\gamma})^{1/2} \norm{{\bm \delta}^{(0)}}}{c}\biggr)^{-1} (1-\eta \underline{\gamma})^{1/2} \norm{{\bm \delta}^{(0)}} \rho^k(\bm M_\eta) \\
    &= \biggl(1 - \frac{\norm{{\bm \delta}^{(0)}}}{c_1(\bm x^*,\eta)}\biggr)^{-1} (1-\eta \underline{\gamma})^{1/2} \norm{{\bm \delta}^{(0)}} \rho^k(\bm M_\eta) . \numberthis \label{equ:upper_norm_delta}
\end{align*}
From the lower bound $(1-\eta \overline{\gamma})^{1/2} \norm{\bm \delta^{(k)}}$ and the upper bound in (\ref{equ:upper_norm_delta}), we obtain (\ref{equ:rho1_norm_delta}).
Finally, the region of convergence $\norm{\bm \delta^{(0)}} < c_1(\bm x^*,\eta)$ is sufficient to guarantee that $\norm{\tilde{\bm \delta}^{(0)}} < c = (1-\eta \underline{\gamma})^{1/2} c_1(\bm x^*,\eta)$ due to (\ref{equ:delta_t_upperbound}).
This completes our proof of the lemma.

\section{Auxiliary Lemmas}


\begin{lemma} \label{lem:ZD}
Given a matrix $\bm Z \in \R^{2N \times N}$ as in (\ref{equ:Z}). Then for any diagonal matrix $\bm D \in \R^{N \times N}$, we have $(\bm D \otimes \bm I_2) \bm Z = \bm Z \bm D$.
\end{lemma}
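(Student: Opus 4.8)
The plan is a direct verification using the mixed-product property of the Kronecker product together with the diagonality of $\bm D$. First I would write $\bm D = \sum_{j=1}^N d_j\, \bm e_j \bm e_j^\topnew$, where $d_1,\dots,d_N$ denote its diagonal entries, so that $\bm D \otimes \bm I_2 = \sum_{j=1}^N d_j (\bm e_j \bm e_j^\topnew \otimes \bm I_2)$.

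Next I would expand the left-hand side $(\bm D \otimes \bm I_2)\bm Z$ using the representation (\ref{equ:Z}) of $\bm Z$ and the identity $(\bm A\otimes\bm B)(\bm C\otimes\bm E)=(\bm A\bm C)\otimes(\bm B\bm E)$. In the resulting double sum each cross term carries the factor $\bm e_j^\topnew \bm e_i = \delta_{ij}$, so the sum collapses to $\sum_{i=1}^N d_i\, (\bm e_i\bm e_i^\topnew)\otimes\bm v_i$. For the right-hand side I would compute $\bm Z\bm D = \sum_{i=1}^N (\bm e_i\bm e_i^\topnew\bm D)\otimes\bm v_i$ and use that $\bm e_i^\topnew\bm D = d_i\bm e_i^\topnew$ because $\bm D$ is diagonal, obtaining the same expression $\sum_{i=1}^N d_i\,(\bm e_i\bm e_i^\topnew)\otimes\bm v_i$; comparing the two sides proves the claim. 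An equivalent and perhaps cleaner route is to observe that the $j$th column of $\bm Z$ is $\bm Z\bm e_j = \bm e_j\otimes\bm v_j$ and then verify the identity column-by-column: $(\bm D\otimes\bm I_2)(\bm e_j\otimes\bm v_j) = (\bm D\bm e_j)\otimes\bm v_j = d_j(\bm e_j\otimes\bm v_j) = \bm Z(d_j\bm e_j) = \bm Z\bm D\bm e_j$.

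This lemma is essentially bookkeeping and presents no substantive obstacle; the only point requiring care is that diagonality of $\bm D$ is exactly what makes $\bm e_i^\topnew\bm D$ (equivalently $\bm D\bm e_j$) collapse to a scalar multiple of $\bm e_i^\topnew$ — for a general $\bm D$ the identity fails, which is precisely why the later arguments that invoke this lemma always use it with $\bm D = \bm D_\eta$ or $\bm D = \diag(\bm\gamma)$.
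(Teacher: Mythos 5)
Your proposal is correct and follows essentially the same route as the paper's proof: writing $\bm D$ as $\sum_j D_{jj}\,\bm e_j\bm e_j^\topnew$, applying the mixed-product property of the Kronecker product, and collapsing the double sum via $\bm e_i^\topnew \bm e_j = \delta_{ij}$. The column-by-column check you mention is a slightly cleaner packaging of the same computation, so there is nothing further to add.
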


\begin{proof}
Recall from (\ref{equ:Zv}) that $\bm Z = \sum_{i=1}^N \bm e_i \bm e_i^\topnew \otimes \bm v_i$, where $\bm v_i = [-x_{2i},x_{2i-1}]^\topnew$.
By representing $\bm D = \sum_{i=1}^N D_{ii} \bm e_i \bm e_i^\topnew$, we have
\begin{align*}
    (\bm D \otimes \bm I_2) \bm Z &= \Bigl( \bigl( \sum_{i=1}^N D_{ii} \bm e_i \bm e_i^\topnew \bigr) \otimes \bm I_2 \Bigr) \cdot \bigl( \sum_{j=1}^N \bm e_j \bm e_j^\topnew \otimes \bm v_j \bigr) \\
    &= \sum_{i=1}^N \sum_{j=1}^N \Bigl( \bigl( D_{ii} \bm e_i \bm e_i^\topnew \bigr) \cdot \bigl( \bm e_j \bm e_j^\topnew \bigr) \Bigr) \otimes (\bm I_2 \cdot \bm v_j) \\
    &= \sum_{i=1}^N \sum_{j=1}^N D_{ii} \bigl( (\bm e_i^\topnew \bm e_j) \cdot \bm e_i \bm e_j^\topnew \bigr) \otimes \bm v_j \\
    &= \sum_{i=1}^N D_{ii} ( \bm e_i \bm e_i^\topnew ) \otimes \bm v_i \\
    &= \sum_{i=1}^N \sum_{j=1}^N \Bigl( \bigl( \bm e_i \bm e_i^\topnew \bigr) \cdot \bigl( D_{jj} \bm e_j \bm e_j^\topnew \bigr) \Bigr) \otimes (\bm v_i \cdot 1) \\
    &= \Bigl( \sum_{i=1}^N \bm e_i \bm e_i^\topnew \otimes \bm v_i \Bigr) \cdot \Bigl( \bigl( \sum_{i=1}^N D_{jj} \bm e_j \bm e_j^\topnew \bigr) \otimes 1 \Bigr) \\
    &= \Bigl( \sum_{i=1}^N \bm e_i \bm e_i^\topnew \otimes \bm v_i \Bigr) \cdot \bigl( \sum_{i=1}^N D_{jj} \bm e_j \bm e_j^\topnew \bigr) \\
    &= \bm Z \bm D ,
\end{align*}
where it is noted that 
\begin{align*}
    \bm e_i^\topnew \bm e_j = \begin{cases}
        1 &\text{ if } i=j , \\
        0 &\text{ if } i\neq j .
    \end{cases}
\end{align*}
\end{proof}

\begin{lemma} \label{lem:eig_Hn}
For any eigenvalue $\lambda$ of $\bm Z \bm M_\eta \bm Z^\topnew$, either $\lambda=0$ or $\lambda$ is an eigenvalue of $\bm M_\eta$. Consequently, we have
\begin{align*}
    \rho(\bm Z \bm M_\eta \bm Z^\topnew) = \rho(\bm M_\eta) .
\end{align*}
\end{lemma}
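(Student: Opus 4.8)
The plan is to exploit that $\bm Z$ is semi-orthogonal, i.e., $\bm Z^\topnew \bm Z = \bm I_N$ (established in Lemma~\ref{lem:KKT2}), which lets us pass between the $2N$-dimensional matrix $\bm Z \bm M_\eta \bm Z^\topnew$ and the $N$-dimensional matrix $\bm M_\eta$ in the spirit of the classical ``eigenvalues of $AB$ versus $BA$'' fact. First I would take a nonzero eigenvalue $\lambda$ of $\bm Z \bm M_\eta \bm Z^\topnew$ with eigenvector $\bm u \neq \bm 0_{2N}$, so that $\bm Z \bm M_\eta \bm Z^\topnew \bm u = \lambda \bm u$. Left-multiplying by $\bm Z^\topnew$ and using $\bm Z^\topnew \bm Z = \bm I_N$ gives $\bm M_\eta (\bm Z^\topnew \bm u) = \lambda (\bm Z^\topnew \bm u)$. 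Since $\lambda \neq 0$, the vector $\bm u = \lambda^{-1} \bm Z \bm M_\eta \bm Z^\topnew \bm u$ is nonzero only if $\bm Z^\topnew \bm u \neq \bm 0_N$ (otherwise the right-hand side of the eigen-equation vanishes), so $\bm Z^\topnew \bm u$ is a genuine eigenvector of $\bm M_\eta$ with eigenvalue $\lambda$. This establishes the first claim: every eigenvalue of $\bm Z \bm M_\eta \bm Z^\topnew$ is either $0$ or an eigenvalue of $\bm M_\eta$.

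For the spectral-radius identity I would also prove the reverse inclusion. Given any eigenvalue $\mu$ of $\bm M_\eta$ with eigenvector $\bm w \neq \bm 0_N$, set $\bm u = \bm Z \bm w$; then $\bm Z^\topnew \bm u = \bm Z^\topnew \bm Z \bm w = \bm w \neq \bm 0_N$, hence $\bm u \neq \bm 0_{2N}$, and $\bm Z \bm M_\eta \bm Z^\topnew \bm u = \bm Z \bm M_\eta \bm w = \mu \bm Z \bm w = \mu \bm u$, so $\mu$ is an eigenvalue of $\bm Z \bm M_\eta \bm Z^\topnew$. Combining the two inclusions, the spectrum of $\bm Z \bm M_\eta \bm Z^\topnew$ is exactly the spectrum of $\bm M_\eta$, possibly with $0$ adjoined (genuinely so, since $\bm Z \bm M_\eta \bm Z^\topnew$ has rank at most $N<2N$). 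Taking the maximum of absolute values over each spectrum and using that a spectral radius is always nonnegative yields $\rho(\bm Z \bm M_\eta \bm Z^\topnew) = \max\{0,\rho(\bm M_\eta)\} = \rho(\bm M_\eta)$.

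I do not anticipate a real obstacle here; the only point requiring care is the bookkeeping of when $\bm Z^\topnew \bm u$ (resp. $\bm Z \bm w$) is nonzero, which is precisely where $\bm Z^\topnew \bm Z = \bm I_N$ enters, and the degenerate case $\rho(\bm M_\eta)=0$ (where all eigenvalues of $\bm Z \bm M_\eta \bm Z^\topnew$ are $0$ by the first claim) is handled in one line. As an alternative I could simply invoke the fact that the nonzero eigenvalues of $(\bm Z \bm M_\eta)(\bm Z^\topnew)$ and of $(\bm Z^\topnew)(\bm Z \bm M_\eta)=\bm M_\eta$ coincide with multiplicities, which gives both statements at once; I would nonetheless prefer the elementary eigenvector argument above for self-containedness.
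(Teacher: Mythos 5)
Your proof is correct and follows essentially the same route as the paper: left-multiply the eigen-equation by $\bm Z^\topnew$ and use $\bm Z^\topnew \bm Z = \bm I_N$ to conclude each nonzero eigenvalue of $\bm Z \bm M_\eta \bm Z^\topnew$ is an eigenvalue of $\bm M_\eta$. You are in fact slightly more careful than the paper, which asserts the spectral-radius equality directly from this one inclusion, whereas you also supply the reverse inclusion (via $\bm u = \bm Z \bm w$) that is needed to rule out $\rho(\bm Z \bm M_\eta \bm Z^\topnew) < \rho(\bm M_\eta)$.
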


\begin{proof}
Let $(\lambda, \bm u)$ be a pair of eigenvalue and eigenvector of $\bm Z \bm M_\eta \bm Z^\topnew$. Then, we have
\begin{align} \label{equ:eig_ZHZ}
    \bm Z \bm M_\eta \bm Z^\topnew \bm u = \lambda \bm u .
\end{align}
Left-multiplying both sides of (\ref{equ:eig_ZHZ}) by $\bm Z^\topnew$ and using the semi-orthogonality of $\bm Z$, we obtain $\bm M_\eta (\bm Z^\topnew \bm u) = \lambda (\bm Z^\topnew \bm u)$.
This means either $\bm Z^\topnew \bm u = \bm 0_N$ or $\bm Z^\topnew \bm u$ is an eigenvector of $\bm M_\eta$. In the former case, we have $\lambda=0$. In the latter case, we have $\lambda$ is an eigenvalue of $\bm M_\eta$. 
Finally, since the spectral radius is the maximum absolute value of all eigenvalues, it is trivial that $\rho(\bm Z \bm M_\eta \bm Z^\topnew) = \rho(\bm M_\eta)$.
\end{proof}

\begin{lemma} 
\label{lem:scalar}
(Rephrased from the supplemental material of \cite{vu2018adaptive}) Let $\{a_k\}_{k=0}^\infty \subset \R_+$ be the sequence defined by 
\begin{align} \label{equ:scalar}
a_{k+1} = \rho a_k + q a_k^2 \qquad \text{ for } k=0,1,\ldots ,
\end{align}
where $0<\rho<1$ and $q \ge 0$. Then $\{a_k\}_{k=0}^\infty$ converges \textbf{monotonically} to $0$ if and only if $a_0 < \frac{1-\rho}{q}$. 
A simple linear convergence bound can be derived for $a_0 < \rho \frac{1-\rho}{q}$ in the form of 
\begin{align} \label{equ:ak}
    a_k \leq \biggl(1 - \frac{a_0 q}{\rho(1-\rho)}\biggr)^{-1} a_0 \rho^k .
\end{align}
\end{lemma}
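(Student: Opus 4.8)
The plan is to first clear away the degenerate cases, then treat the two assertions — monotone convergence and the linear bound — in turn, handling the second by a reciprocal substitution that linearises the recursion. If $q=0$ the recursion is $a_{k+1}=\rho a_k$, so $a_k=\rho^k a_0\to 0$ monotonically for \emph{every} $a_0$, the condition $a_0<(1-\rho)/q$ is vacuous, and (\ref{equ:ak}) holds with equality; if $a_0=0$ the sequence is identically $0$. Hence I may assume $q>0$ and $a_0>0$, and since $a_{k+1}=a_k(\rho+qa_k)$ with $\rho+qa_k>0$, induction gives $a_k>0$ for all $k$, so reciprocals are legitimate later.

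For the ``if'' direction I would show by induction that whenever $0<a_k<(1-\rho)/q$ we get $qa_k<1-\rho$, hence $0<\rho+qa_k<1$, hence $0<a_{k+1}=a_k(\rho+qa_k)<a_k$; so the iterates stay in $(0,(1-\rho)/q)$ and strictly decrease, converging to some $L\ge 0$. Passing to the limit in the recursion gives $L=\rho L+qL^2$, i.e.\ $L(1-\rho-qL)=0$, and since $L\le a_0<(1-\rho)/q$ the second factor is positive, forcing $L=0$. For the ``only if'' direction I would argue by contraposition: if $a_0\ge(1-\rho)/q$ then $\rho+qa_0\ge 1$, so $a_1\ge a_0$, and inductively $a_{k+1}\ge a_k\ge a_0>0$; the sequence is non-decreasing and bounded away from $0$, so it cannot converge to $0$.

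For the linear bound, assume $a_0<\rho(1-\rho)/q$; this is stronger than $a_0<(1-\rho)/q$, so by the previous part $a_k>0$ and decreasing, and I set $u_k=1/a_k$. Then $u_{k+1}=u_k^2/(\rho u_k+q)$, and the elementary estimate $qu_k/(\rho u_k+q)\le q/\rho$ yields the affine lower recursion
$u_{k+1}=\tfrac{u_k}{\rho}-\tfrac{qu_k}{\rho(\rho u_k+q)}\ge\tfrac{u_k}{\rho}-\tfrac{q}{\rho^2}$.
Unrolling this geometric-series inequality gives $u_k\ge\rho^{-k}\bigl(u_0-\tfrac{q(1-\rho^k)}{\rho(1-\rho)}\bigr)\ge\rho^{-k}\bigl(\tfrac1{a_0}-\tfrac{q}{\rho(1-\rho)}\bigr)$, and the hypothesis makes the bracket equal to $\tfrac1{a_0}\bigl(1-\tfrac{qa_0}{\rho(1-\rho)}\bigr)>0$; inverting then produces exactly (\ref{equ:ak}).

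The only genuinely delicate point — and the reason a naive induction directly on $a_k$ fails (the bound $a_k\le Ca_0\rho^k$ does not reproduce itself because the quadratic term $qa_k^2$ leaks out additively) — is recognising that the quadratic term must be controlled \emph{multiplicatively}, which is precisely what the substitution $u_k=1/a_k$ achieves; after that, everything reduces to a one-line geometric sum. Everything else is routine induction and a limit argument.
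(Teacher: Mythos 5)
Your proof is correct and rests on the same key idea as the paper's: a reciprocal substitution that linearises the quadratic recursion, followed by a geometric sum --- your affine recursion $u_{k+1}\ge u_k/\rho - q/\rho^2$ for $u_k=1/a_k$ is exactly the paper's telescoping inequality $\tfrac{1}{d_i}-\tfrac{1}{d_{i+1}} < \tau(1-\rho)\rho^{i-1}$ for the normalised quantity $d_k = a_k/(a_0\rho^k)$, rewritten. The only substantive difference is that you also supply the induction and limit argument for the ``if and only if'' monotone-convergence claim, which the paper's proof omits entirely, so your write-up is in fact the more complete of the two.
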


\begin{proof}
For each $k \in {\mathbb N}$, let us define $d_k = {a_k}/(a_0 \rho^k)$.
Substituting $a_k = a_0 d_k \rho^k$ into (\ref{equ:scalar}) and defining $\tau = a_0 q / (1-\rho)$, we obtain
\begin{align*}
\begin{cases}
    d_0 = 1, \\
    d_{k+1} = d_k+\tau (1-\rho)\rho^{k-1}d_k^2 \quad \text{for } k=0,1,\ldots .
\end{cases}
\end{align*}
Since $\tau (1-\rho)\rho^{k-1}d_k^2 > 0$, the sequence $\{d_k\}_{k=0}^\infty$ is strictly increasing and positive. Thus, using $d_i > d_{i+1} > 0$, for any $i = 0,1,\ldots,k-1$, we have
\begin{align*}
    \frac{1}{d_i}-\frac{1}{d_{i+1}} &= \frac{d_{i+1}-d_i}{d_{i+1}d_i} < \frac{d_{i+1}-d_i}{d_i^2} = \tau (1-\rho)\rho^{i-1} .
\end{align*}
Summing over $i=0,1,\ldots,k-1$, we obtain
\begin{align*}
    1 - \frac{1}{d_k} &< \sum_{i=0}^{k-1} \tau (1-\rho) \rho^{i-1} = \frac{\tau}{\rho} (1-\rho^k) < \frac{\tau}{\rho} . \numberthis \label{equ:dk1}
\end{align*}
Substituting $d_k = {a_k}/(a_0 \rho^k)$ and $\tau = a_0 q / (1-\rho)$ into (\ref{equ:dk1}) and rearranging terms yield the desired bound on $a_k$ in (\ref{equ:ak}).
\end{proof}

\begin{lemma} 
\label{lem:scalar2}
Let $\{b_k\}_{k=0}^\infty \subset \R_+$ be the sequence defined by 
\begin{align} \label{equ:scalar2}
b_{k+1} \leq \rho b_k + q b_k^2 \qquad \text{ for } k=0,1,\ldots ,
\end{align}
where $0<\rho<1$ and $q \ge 0$. 
If $b_0 < \frac{1-\rho}{q}$, then $\{b_k\}_{k=0}^\infty$ converges to $0$.
If $b_0 < c \triangleq \rho \frac{1-\rho}{q}$, then for any integer $k \geq 0$,
\begin{align*}
    b_k \leq \biggl(1 - \frac{b_0 }{c}\biggr)^{-1} b_0 \rho^k .
\end{align*}
\end{lemma}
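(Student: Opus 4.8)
The plan is to reduce Lemma~\ref{lem:scalar2} to the equality case already handled in Lemma~\ref{lem:scalar} by a monotone comparison argument. First I would introduce the auxiliary sequence $\{a_k\}_{k=0}^\infty \subset \R_+$ defined by $a_0 = b_0$ and $a_{k+1} = \rho a_k + q a_k^2$, i.e.\ the sequence governed by (\ref{equ:scalar}) with the same initial value as $\{b_k\}$. The point of this construction is that the hypotheses of the current lemma translate verbatim into those of Lemma~\ref{lem:scalar}: $b_0 < (1-\rho)/q$ is exactly $a_0 < (1-\rho)/q$, and $b_0 < c = \rho(1-\rho)/q$ is exactly $a_0 < \rho(1-\rho)/q$.

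Second, I would prove by induction that $0 \le b_k \le a_k$ for all $k \ge 0$. The base case is the identity $b_0 = a_0$ together with $b_0 \ge 0$ (from $\{b_k\} \subset \R_+$). For the inductive step, consider the map $\phi(t) = \rho t + q t^2$, which is nondecreasing on $\R_+$ because $\rho \ge 0$ and $q \ge 0$; hence if $0 \le b_k \le a_k$, then $b_{k+1} \le \phi(b_k) \le \phi(a_k) = a_{k+1}$, where the first inequality is the defining relation (\ref{equ:scalar2}) and the second is monotonicity of $\phi$. Nonnegativity of $b_{k+1}$ is again part of the standing hypothesis $\{b_k\} \subset \R_+$, so the induction closes.

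Third, I would invoke Lemma~\ref{lem:scalar} on $\{a_k\}$. If $b_0 < (1-\rho)/q$, then $\{a_k\}$ converges monotonically to $0$, and squeezing $0 \le b_k \le a_k$ gives $b_k \to 0$. If $b_0 < c$, then (\ref{equ:ak}) yields
\begin{align*}
    a_k \le \Bigl(1 - \tfrac{a_0 q}{\rho(1-\rho)}\Bigr)^{-1} a_0 \rho^k = \Bigl(1 - \tfrac{b_0}{c}\Bigr)^{-1} b_0 \rho^k ,
\end{align*}
and combining with $b_k \le a_k$ gives the claimed bound. I do not anticipate any real obstacle here: the entire argument is a monotone-dominance comparison, and the only thing needing (easy) verification is that $\phi$ is monotone on the relevant domain and that the two case thresholds match those of Lemma~\ref{lem:scalar} exactly, which holds by the choice $a_0 = b_0$.
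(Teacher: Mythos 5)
Your proposal is correct and follows essentially the same route as the paper: both introduce the surrogate sequence $a_0=b_0$, $a_{k+1}=\rho a_k+q a_k^2$, prove $b_k\leq a_k$ by induction, and then invoke Lemma~\ref{lem:scalar}. Your explicit remark that $\phi(t)=\rho t+qt^2$ is nondecreasing on $\R_+$ is a small but welcome clarification of the inductive step that the paper leaves implicit.
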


\begin{proof}
Let us define a surrogate sequence $\{a_k\}_{k=0}^\infty$ that upper-bounds $\{b_k\}_{k=0}^\infty$ as follows
\begin{align*}
    \begin{cases}
        a_0 = b_0 , \\
        a_{k+1} = \rho a_k + q a_k^2 .
    \end{cases}
\end{align*}
First, we prove by induction that 
\begin{align} \label{equ:bk_ak}
    b_k \leq a_k \quad \forall k \in \mathbb{N}  .
\end{align}
The base case when $k=0$ holds trivially as $b_0=a_0$.
In the induction step, given $b_k \leq a_k$ for an integer $k \geq 0$, we have
\begin{align*}
    b_{k+1} &\leq \rho b_k + q b_k^2 \leq \rho a_k + a_k^2 = a_{k+1} .
\end{align*}
By the principle of induction, (\ref{equ:bk_ak}) holds for all $k \in \mathbb{N}$. 
Now, by Lemma~\ref{lem:scalar}, we have
\begin{align*}
    b_k \leq a_k &\leq \biggl(1 - \frac{a_0 q}{\rho(1-\rho)}\biggr)^{-1} a_0 \rho^k \\
    &= \biggl(1 - \frac{b_0 q}{\rho(1-\rho)}\biggr)^{-1} b_0 \rho^k .
\end{align*}
This completes our proof of the lemma.
\end{proof}

\ifCLASSOPTIONcaptionsoff
  \newpage
\fi




%
\bibliographystyle{IEEEtran}
\bibliography{IEEEabrv,refs}

\vfill


\end{document}